\documentclass[12pt]{amsart}

\usepackage{CJK}

\oddsidemargin 3ex
\evensidemargin 3ex
\textheight 8.1in
\textwidth 6.0in






\newtheorem{theorem}{Theorem}[section]
\newtheorem{lemma}[theorem]{Lemma}
\newtheorem{proposition}[theorem]{Proposition}
\newtheorem{corollary}[theorem]{Corollary}

 \theoremstyle{definition}
\newtheorem{definition}[theorem]{Definition}

\theoremstyle{remark}
\newtheorem{remark}[theorem]{Remark}

\numberwithin{equation}{section}



\begin{document}
\begin{CJK*}{GBK}{song}

\title[Prescribed curvature problem on complete noncompact manifolds]
{Local $C^0$-estimate and existence theorems for some prescribed curvature problems
on complete noncompact Riemannian manifolds}
 
 \author{Rirong Yuan}
\address{School of Mathematics, South China University of Technology, Guangzhou 510641, Guangdong, China}
\email{yuanrr@scut.edu.cn}

\thanks{Research supported in part by NSFC grant 11801587.
}
\date{}

\begin{abstract}
In this article we study a class of prescribed curvature problems on complete noncompact Riemannian manifolds.
To be precise, we derive local $C^0$-estimate under an asymptotic condition which is in effect optimal, and prove the existence of complete conformal metrics with prescribed curvature functions.
A key ingredient of our strategy is  Aviles-McOwen's result or its fully nonlinear version
on the existence of complete conformal metrics with prescribed curvature functions on manifolds with boundary.




\end{abstract}

\maketitle

 \section{Introduction}


An important problem in conformal geometry is to determine which function could arise as a curvature function of a metric that is complete and conformally equivalent to a given metric.
A special case, known as Yamabe problem, conjectures that any closed Riemannian manifold 
 of  dimension $\geq3$ can be conformally deformed to achieve constant scalar curvature. The Yamabe problem on closed  manifolds is valid according to the work of Trudinger \cite{Trudinger1968}, Aubin \cite{Aubin1976} and Schoen \cite{Schoen1984}, while for general complete noncompact Riemannian manifolds the corresponding Yamabe problem is not always true as shown by Jin \cite{Jin1988} who constructed some counterexamples.
This reveals that there are great differences between the conformal change of Riemannian metrics on closed manifolds and those on complete noncompact manifolds.
 Therefore,  
  it is reasonable to investigate which complete noncompact Riemannian manifold could admit a complete conformal metric 
  with the scalar curvature being a constant or a more general function.
  The problem in this and related topic has attracted enormous  interest,  
 see  for instance 
  \cite{Aviles1985McOwen,Aviles1988McOwen2,Cheng1992Ni,Jin1993,Ni-82Indiana} and references therein.
  

This paper is devoted to study a fully nonlinear version and then extend it to higher order curvatures.  

Without specific clarification, we assume throughout this article that $(M,g)$ is a  complete noncompact Riemannian manifold of dimension $n\geq3$ with the Levi-Civita connection 
$\nabla$. For a Riemannian metric $g$, we denote the sectional, Ricci and scalar curvature tensors by  $Sec_g$,
$Ric_g$ and $R_g$, respectively. 

Partial results are stated as follows.

Let  $\sigma_k$ be the $k$-th elementary symmetric function,   $\Gamma_k$
 be the $k$-th G{\aa}rding's cone 
 $$\Gamma_k=\left\{\lambda\in \mathbb{R}^n: \sigma_j(\lambda)>0, \mbox{  } \forall 1\leq j\leq k \right\}.$$

 \begin{theorem}
\label{thm2-ricci}
On a complete noncompact Riemannian manifold $(M,g)$  
satisfying 
\begin{equation}  \begin{aligned}
\,& \sigma_k(\lambda(-g^{-1}Ric_g))\geq \delta>0,  \,& \lambda(-g^{-1}Ric_g)\in\Gamma_k  \nonumber
\end{aligned}  \end{equation}
for some $2\leq k\leq n$ and for some constant $\delta>0$,
 there exists at least one smooth complete conformal metric $\tilde{g}=e^{2u}g$ with  
 $\lambda(-\tilde{g}^{-1}Ric_{\tilde{g}}) \in\Gamma_k$ and 
\begin{equation}
 \begin{aligned}
 \sigma_k(\lambda(-\tilde{g}^{-1}Ric_{\tilde{g}}))=\psi \mbox{ in } M, \nonumber
 \end{aligned} 
 \end{equation}
provided that $\psi$ is a positive smooth function having a uniform bound 
$0<\psi<\Lambda,$ where $\Lambda$ is a positive constant.
\end{theorem}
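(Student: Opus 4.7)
The plan is to obtain $u$ as the limit of a sequence of solutions to singular Dirichlet problems on an exhaustion of $M$. Fix a smooth exhaustion $\Omega_{1}\subset\subset\Omega_{2}\subset\subset\cdots$ of $M$ with $\bigcup_{j}\Omega_{j}=M$. On each $\Omega_{j}$, I invoke the fully nonlinear analogue of Aviles--McOwen's theorem, which the paper takes as its main preliminary input, to produce a smooth admissible $u_{j}$ with $\lambda(-\tilde{g}_{j}^{-1}Ric_{\tilde{g}_{j}})\in\Gamma_{k}$ solving
\begin{equation*}
\sigma_{k}(\lambda(-\tilde{g}_{j}^{-1}Ric_{\tilde{g}_{j}}))=\psi \text{ on } \Omega_{j}, \qquad u_{j}(x)\to +\infty \text{ as } x\to\partial\Omega_{j},
\end{equation*}
so that $\tilde{g}_{j}=e^{2u_{j}}g$ is a complete conformal metric on $\Omega_{j}$. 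The goal is then to pass to $j\to\infty$ and recover a smooth complete solution on $M$.

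The crux is the local $C^{0}$-estimate of the title: for every compact $K\subset M$ one needs a uniform bound $|u_{j}|\leq C_{K}$ valid for all $j$ with $K\subset\Omega_{j}$. The upper half is the delicate one, and this is precisely where the asymptotic hypothesis $\sigma_{k}(\lambda(-g^{-1}Ric_{g}))\geq\delta>0$ in $\Gamma_{k}$ together with $\psi<\Lambda$ enters. My strategy is to construct a local upper barrier: inside a precompact neighborhood $\Omega\supset K$, perturb the background metric $g$, which is essentially a supersolution thanks to the $\delta$-bound, by an auxiliary function of the distance to $\partial\Omega$ chosen to retain admissibility in $\Gamma_{k}$ while blowing up at $\partial\Omega$; comparison on $\Omega$ with $u_{j}$ then yields $u_{j}\leq C_{K}$ on $K$. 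A lower bound $u_{j}\geq -C_{K}$ is obtained analogously via a subsolution exploiting $\psi>0$. I expect the construction of this upper barrier, under the \emph{optimal} asymptotic condition and without any extra geometric assumption on $(M,g)$, to be the main technical hurdle.

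Once the local $C^{0}$-estimate is secured, local $C^{1}$ and $C^{2}$ estimates follow from standard maximum-principle arguments with cutoff test functions for concave Hessian-type equations on $\Gamma_{k}$; Evans--Krylov then provides $C^{2,\alpha}_{\mathrm{loc}}$, and Schauder bootstrapping yields $C^{\infty}_{\mathrm{loc}}$ bounds. A diagonal subsequence extraction produces a smooth admissible $u$ on $M$ solving $\sigma_{k}(\lambda(-\tilde{g}^{-1}Ric_{\tilde{g}}))=\psi$. Completeness of $\tilde{g}=e^{2u}g$ will be verified as a final step: using the boundary blow-up of $u_{j}$ on $\partial\Omega_{j}$ together with the comparison principle, one shows that $u$ satisfies a lower growth estimate along any divergent curve of $(M,g)$ strong enough to force the curve to have infinite $\tilde{g}$-length.
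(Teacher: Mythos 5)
Your overall skeleton (exhaustion, solve on each piece, local $C^0$ then local $C^{1}$--$C^{2}$ estimates, Evans--Krylov, diagonal extraction) matches the paper, but the step you yourself flag as ``the main technical hurdle'' --- the local upper bound --- is exactly where your plan has a genuine gap, and the way you propose to fill it rests on a sign error in the role of the hypotheses. The assumption $\sigma_k(\lambda(-g^{-1}Ric_g))\geq\delta>0$, $\lambda(-g^{-1}Ric_g)\in\Gamma_k$, together with $\psi<\Lambda$, says precisely that $g$ (after adding a suitable negative constant to $u$) is a global \emph{subsolution}: it is the asymptotic condition \eqref{key-assum1} with $\underline g=g$, and via the comparison principle it yields the \emph{lower} bound $u_j\geq -C$ (Theorem \ref{thm-c0-lower}) --- not an upper barrier. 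Your claim that $g$ ``is essentially a supersolution thanks to the $\delta$-bound'' is backwards: a supersolution would require an upper bound on $\sigma_k(\lambda(-g^{-1}Ric_g))$ relative to $\psi$, which is not assumed. Consequently the proposed upper barrier (a perturbation of $g$ by a function of the distance to $\partial\Omega$ that ``retains admissibility'' and blows up at $\partial\Omega$) is neither correctly set up --- for an upper barrier you want a supersolution, i.e.\ either non-admissibility or $f(\lambda(V[v]))\leq\psi e^{2v}$ throughout $\Omega$, not admissibility --- nor actually constructed; making such a blow-up supersolution valid on all of $\Omega$ (not just in a collar where $\rho$ is smooth and small) is essentially the content of a Loewner--Nirenberg/Aviles--McOwen type theorem, and cannot be dismissed as a perturbation argument. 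Conversely, your lower bound ``exploiting $\psi>0$'' would not work by itself: positivity of $\psi$ gives no subsolution; it is the pair $\sigma_k\geq\delta$, $\psi<\Lambda$ that does.

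The paper closes precisely this gap by \emph{quoting} rather than constructing the blow-up supersolution: by \eqref{key1-main}, any admissible approximate solution satisfies $\mathrm{tr}(g^{-1}A^{\tau,\alpha}_{g_k})\geq n\psi e^{2u_k}$, i.e.\ $u_k$ is a subsolution of the semilinear scalar-curvature equation on a fixed precompact piece $M_m\supset K$; Aviles--McOwen's Theorem \ref{thm1-AM} supplies a solution $\tilde u_m$ of that scalar equation on $M_m$ blowing up at $\partial M_m$, and the maximum principle gives $u_k\leq\tilde u_m+C$ on $M_m$ for all $k\geq m$ (Theorem \ref{thm-c0-upper}); alternatively one uses the fully nonlinear version, Theorem \ref{existence1-compact}. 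Note that this upper bound needs no curvature hypothesis and no bound on $\psi$ at all --- the hypotheses of the theorem enter only through the lower bound. Two further differences worth noting: the paper's approximating problems \eqref{approximate-DP1} use the finite boundary data $u=\underline u$ on $\partial M_k$ (blow-up solutions appear only as barriers), and completeness of the limit metric then follows immediately from $u_\infty\geq\underline u+\mathrm{const}$ together with completeness of $g$, so your ``divergent curve'' argument is unnecessary here; and the local $C^1$--$C^2$ estimates are not entirely ``standard cutoff arguments'' for general $A^{\tau,\alpha}$ --- they rely on the full uniform ellipticity established through the partial uniform ellipticity constants $\kappa_\Gamma,\vartheta_\Gamma$ in Section \ref{section3} (though in the Ricci case $\varrho<0$ this is the benign case).
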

By studying $\sigma_k$ curvature equation for conformal deformation of  Einstein tensor,
 $G_g=Ric_g-\frac{R_g}{2}g,$ we also deduce
\begin{theorem}
\label{thm3}
Let $(M,g)$ be a complete noncompact Riemannian manifold with 
  $Sec_g\leq -\delta<0$ for some constant $\delta>0$.
Then for each $2\leq k\leq n-1$ and for any smooth positive function $\psi$ 
 with $0<\psi<\Lambda$ for some constant $\Lambda>0$,
 $M$ admits a smooth complete conformal metric $\tilde{g}=e^{2u}g$ with  
   \begin{equation}   \begin{aligned}
\,&  \sigma_k(\lambda(\tilde{g}^{-1}G_{\tilde{g}}))=\psi, \,& \lambda(\tilde{g}^{-1}G_{\tilde{g}})\in \Gamma_k\,& \mbox{ in } M.  \nonumber
 \end{aligned}  \end{equation}

\end{theorem}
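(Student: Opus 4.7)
\textbf{Proof plan for Theorem~\ref{thm3}.}
The strategy is to run the same exhaustion-and-limit scheme used for Theorem~\ref{thm2-ricci}, with the Einstein tensor $G_g$ taking the role previously played by $-Ric_g$. The first task is to check that the sectional-curvature hypothesis forces the background metric $g$ itself to be admissible with a uniform positive lower bound on $\sigma_k$. Fix a point and pick an orthonormal frame $\{e_i\}$ diagonalizing $Ric_g$; then $G_g$ is diagonal in the same frame and a short bookkeeping of sectional curvatures yields
\[
G_g(e_i,e_i)\;=\;-\sum_{\substack{j<k\\ j,k\neq i}}Sec_g(e_j,e_k)\;\geq\;\tfrac{(n-1)(n-2)}{2}\,\delta.
\]
Hence $\lambda(g^{-1}G_g)\in\Gamma_n\subset\Gamma_k$ pointwise on $M$, with $\sigma_k(\lambda(g^{-1}G_g))\geq c(n,k,\delta)>0$. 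This is precisely the analog of the pointwise admissibility hypothesis of Theorem~\ref{thm2-ricci}, this time produced by geometry rather than assumed.

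Writing the prescribed-curvature equation for $\tilde{g}=e^{2u}g$ via the standard conformal transformation laws for $Ric_g$ and $R_g$ gives a fully nonlinear second-order PDE in $u$, for which, by the previous step, $u\equiv 0$ is an admissible sub-solution whenever $\psi<\Lambda$. Now exhaust $M$ by smooth relatively compact domains $\Omega_1\subset\overline{\Omega}_1\subset\Omega_2\subset\overline{\Omega}_2\subset\cdots$ with $\bigcup_i\Omega_i=M$, and on each $\Omega_i$ invoke the fully nonlinear version of the Aviles--McOwen existence theorem on a manifold with boundary (highlighted in the abstract as the key ingredient) to produce a smooth admissible $u_i$ satisfying
\[
\sigma_k\bigl(\lambda(\tilde{g}_i^{-1}G_{\tilde{g}_i})\bigr)=\psi\quad\text{on }\Omega_i,\qquad u_i\to+\infty\text{ at }\partial\Omega_i,
\]
so that $\tilde{g}_i=e^{2u_i}g$ is complete on $\Omega_i$.

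The crucial step is the local $C^0$-estimate, which is the main novelty of the paper: it yields $|u_i|\leq C(K)$ on every compact $K\subset M$ uniformly in $i$, under the ``asymptotic condition'' that is here supplied by the pointwise bound obtained above together with $0<\psi<\Lambda$. Once that is known, standard interior gradient and second-order estimates for concave fully nonlinear elliptic equations, followed by Evans--Krylov and Schauder theory, give $C^{m,\alpha}_{\mathrm{loc}}$-bounds for all $m$; a diagonal subsequence of $\{u_i\}$ then converges in $C^{\infty}_{\mathrm{loc}}(M)$ to a smooth admissible solution $u$ on all of $M$. Completeness of $\tilde{g}=e^{2u}g$ follows because the boundary blow-up of each $u_i$ and a comparison principle force the limit $u$ to be sufficiently large along any sequence leaving every $\Omega_i$, so any divergent curve has infinite $\tilde{g}$-length. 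The principal obstacle throughout is verifying that the local $C^0$-machinery developed for Theorem~\ref{thm2-ricci} still applies: the Einstein tensor involves different trace corrections than $-Ric$ under conformal change, so one must check that the sectional-curvature hypothesis, rather than a Ricci-type bound, is strong enough to run the comparison argument at the heart of the $C^0$-estimate.
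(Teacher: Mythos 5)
Your route is in substance the paper's: the sectional-curvature computation you perform is exactly identity \eqref{GSW1}, which gives $G_g(e_i,e_i)=-\frac12\sum_{k,l\neq i}g(R(e_k,e_l)e_k,e_l)\geq\frac{(n-1)(n-2)}{2}\delta$, hence $\lambda(g^{-1}G_g)\in\Gamma_n\subset\Gamma_k$ with $\sigma_k\geq\binom{n}{k}\left(\frac{(n-1)(n-2)}{2}\delta\right)^k$, so that $g$ itself supplies the asymptotic hypothesis \eqref{key-assum1}; the exhaustion-plus-local-estimates machinery you then describe is the proof of Theorem \ref{thm1}, with only a cosmetic variation (monotone boundary blow-up approximations on $\Omega_i$ instead of Dirichlet approximations with the blow-up solutions of Theorem \ref{existence1-compact} used as upper barriers). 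However, two steps need repair. First, you never verify that the Einstein-tensor equation actually fits the structural framework: $G_g=(n-2)A_g^{n-1,1}$, i.e. $\tau=n-1$, $\alpha=1$, and condition \eqref{tau-alpha} holds for $\tau=n-1$ precisely when $\kappa_\Gamma\vartheta_\Gamma>0$, i.e. when $\Gamma\neq\Gamma_n$. This is exactly where the restriction $2\leq k\leq n-1$ is consumed; it is what yields the full uniform ellipticity (Proposition \ref{keykey}) behind the local $C^1$/$C^2$ estimates of Theorem \ref{thm1-local}, and the sign condition \eqref{positive1} (here $\alpha(n\tau+2-2n)=(n-1)(n-2)>0$) needed to compare with the scalar-curvature blow-up solution of Theorem \ref{thm1-AM}. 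You flag this as something ``to be checked,'' but it cannot be left open, since it is the only place the hypothesis $k\leq n-1$ enters. A smaller point: $u\equiv0$ is a subsolution only if $\Lambda\leq c(n,k,\delta)$; in general you must take a sufficiently negative constant $c$, using the scaling invariance $A^{\tau,\alpha}_{e^{2c}g}=A^{\tau,\alpha}_g$ so that $f(\lambda(g^{-1}A_g^{\tau,\alpha}))\geq\psi e^{2c}$.

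Second, your completeness argument as stated does not work. Your sequence $u_i$ is decreasing on compact sets (by comparison, since $u_i=+\infty$ on $\partial\Omega_i$), so the boundary blow-up of each $u_i$ disappears in the limit and gives no lower control on $u$ near infinity; ``the boundary blow-up of each $u_i$ and a comparison principle'' cannot force the limit to be large along a divergent sequence. The correct mechanism, which is the paper's Theorem \ref{thm-c0-lower}, is the uniform lower bound $u_i\geq \underline{u}$ on $\Omega_i$ obtained by comparing with the (constant) subsolution; passing to the limit gives $u\geq\underline{u}$ on all of $M$, and then $e^{2u}g\geq e^{2\underline{u}}g$ is complete because $g$ is complete. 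With these two repairs your argument coincides with the paper's proof, which simply combines \eqref{GSW1} with Theorem \ref{thm1}.
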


In fact we are primarily concerned with a more general problem of finding 
 a complete conformal metric 
with a prescribed curvature function
   \begin{equation}
 \label{main-equ1}
 \begin{aligned}
\,& f(\lambda(\tilde{g}^{-1}A_{\tilde{g}}^{\tau,\alpha}))=\psi \mbox{  } \mbox{ in } M,  \mbox{  } 
   0<\psi\in C^\infty(M),  \\
 \tilde{g} =  e^{2{u}}\,&g\mbox{ is complete and \textit{admissible} } (i.e. \mbox{  } \lambda({\tilde{g}}^{-1}A_{\tilde{g}}^{\tau,\alpha})\in \Gamma),
 \end{aligned}
 \end{equation}
 which is generated by a smooth symmetric function $f$ 
of eigenvalues of 
$A_{\tilde{g}}^{\tau,\alpha}$ with respect to $\tilde{g}$, where
 \begin{equation}
 \begin{aligned}
 \,& A_{\tilde{g}}^{\tau,\alpha}=\frac{\alpha}{n-2} (Ric_{\tilde g}-\frac{\tau  R_{\tilde g} }{2(n-1)} \tilde{g}), \,& \alpha=\pm1,  \mbox{  }\tau \in \mathbb{R}. \nonumber
 \end{aligned}
 \end{equation}
The equation is a fully nonlinear equation of similar type to Hessian equations going back to the work of Caffarelli-Nirenberg-Spruck \cite{CNS3} which studied Dirichlet problem 
on smooth bounded domains of Euclidean spaces.
 In addition,  
  $f$ 
  is defined in an open symmetric and convex cone 
$\Gamma\subset\mathbb{R}^n$
with vertex at origin, $\partial \Gamma\neq \emptyset$, $\Gamma_n\subseteq\Gamma\subset  \Gamma_1$.
Moreover, 
the following three conditions are required:
\begin{equation}
\label{elliptic}
 f_{i}(\lambda):= 
 \frac{\partial f}{\partial \lambda_{i}}(\lambda)> 0  \mbox{ in } \Gamma,\  1\leq i\leq n,
\end{equation}
\begin{equation}
\label{concave}
 f \mbox{ is  concave in } \Gamma,
\end{equation}
\begin{equation}
\label{homogeneous-1}
\begin{aligned}
f(t\lambda)=t f(\lambda) 
 \mbox{ for any } \lambda\in\Gamma, \mbox{  } t>0.
 \end{aligned}
 \end{equation}
 Without loss of generality, we assume throughout the article that
\begin{equation}
\label{14}
\begin{aligned}
f|_{\partial \Gamma}\equiv0, \mbox{  } f|_{\Gamma}>0 \mbox{ and } f(\vec{\bf 1})=1, \nonumber
 \end{aligned}
 \end{equation} 
where 
$\vec{\bf 1}=  (1,\cdots, 1)\in \mathbb{R}^n$, whenever $f$ is homogeneous of degree one.
This implies
\begin{equation}\label{key1-main} \begin{aligned}
\,& \sum_{i=1}^n \lambda_i \geq n f(\lambda),  \,& \lambda\in\Gamma.
 \end{aligned} \end{equation}

Typical examples satisfying \eqref{elliptic}, \eqref{concave}, \eqref{homogeneous-1} are given by 
$f=\left({C_n^l\sigma_k}/{(C_n^k\sigma_l})\right)^{{1}/{(k-l)}},$ 
$0\leq l<k\leq n, \mbox{ } \Gamma=\Gamma_k,$ where $\sigma_0=1$ and $C_n^k=n!/((n-k)!k!)$.

The $k$-Yamabe problem for conformal deformation of  Schouten tensor $A_g=\frac{1}{n-2}(Ric_g-\frac{R_g}{2(n-1)} g)$ 
was proposed by Viaclovsky \cite{Viaclovsky2000} and since then
it  has drawn much attention as the interest from geometry, see for instance  
  \cite{ChangGurskyYang2002,Ge2006Wang,Guan2003Wang-CrelleJ,Gursky2007Viaclovsky,ABLi2003YYLi,ShengTrudingerWang2007}. 

 Before stating our more general results, we shall recall 
some result on partial uniform ellipticity. 
 \begin{definition}
 [\cite{yuan2020conformal}]
\label{yuan-kappa}
For the cone $\Gamma$, we denote 
\begin{equation}
\begin{aligned}
\kappa_{\Gamma}:=\max \left\{k: (-\alpha_1,\cdots,-\alpha_k,\alpha_{k+1},\cdots, \alpha_n)\in \Gamma, \mbox{ where } \alpha_j>0, \mbox{ } \forall 1\leq j\leq n \right\}. \nonumber
\end{aligned}
\end{equation}
\end{definition}
Obviously, $\kappa_\Gamma$ is an integer with $0\leq \kappa_\Gamma\leq n-1$.
It was shown in \cite{yuan2020conformal} that
the constant $\kappa_\Gamma$ measures the partial uniform ellipticity of $f$:
\begin{lemma}
[\cite{yuan2020conformal}]
\label{yuan-k+1}
Let $f$ and $\Gamma$ be as above. 
Suppose $f$ satisfies \eqref{elliptic}, \eqref{concave}  and
\begin{equation}
\label{addistruc}
\begin{aligned}
\mbox{For each $\sigma<\sup_{\Gamma}f$ and } \lambda\in \Gamma, \mbox{   } \lim_{t\rightarrow +\infty}f(t\lambda)>\sigma.
\end{aligned}
\end{equation}
Then there is a universally positive constant $\vartheta_{\Gamma}$ depending only on 
$\Gamma$, such that
    for each $ \lambda\in \Gamma$ with order $\lambda_1 \leq \cdots \leq\lambda_n$,
   \begin{equation}
   \label{k+1elliptic}
   \begin{aligned}
   f_{{i}}(\lambda) \geq   \vartheta_{\Gamma} \sum_{j=1}^{n}f_j(\lambda) \mbox{ for }  1\leq i\leq \kappa_\Gamma+1. 
   \end{aligned}
   \end{equation}
Moreover, the statement \eqref{k+1elliptic} is sharp and cannot be further improved.

\end{lemma}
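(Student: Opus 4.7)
The plan is to combine the concavity and symmetry of $f$ with a carefully chosen test direction in $\Gamma$ coming from the very definition of $\kappa_{\Gamma}$. Since $f$ is symmetric and concave, the hypothesis $\lambda_{1}\leq\cdots\leq\lambda_{n}$ forces the reverse ordering $f_{1}(\lambda)\geq\cdots\geq f_{n}(\lambda)\geq 0$; in particular the inequality \eqref{k+1elliptic} for all $1\leq i\leq\kappa_{\Gamma}+1$ is reduced to the single bound
\[
f_{\kappa_{\Gamma}+1}(\lambda)\geq\vartheta_{\Gamma}\sum_{j=1}^{n}f_{j}(\lambda).
\]
By the definition of $\kappa_{\Gamma}$ there exist positive $\alpha_{1},\ldots,\alpha_{n}$ with $(-\alpha_{1},\ldots,-\alpha_{\kappa_{\Gamma}},\alpha_{\kappa_{\Gamma}+1},\ldots,\alpha_{n})\in\Gamma$; averaging this point under the action of $S_{\kappa_{\Gamma}}\times S_{n-\kappa_{\Gamma}}$ and exploiting the symmetry and convexity of $\Gamma$, I extract constants $a,b>0$ depending only on $\Gamma$ such that the test vector
\[
\mu_{0}:=(\underbrace{-a,\ldots,-a}_{\kappa_{\Gamma}\text{ entries}},\underbrace{b,\ldots,b}_{n-\kappa_{\Gamma}\text{ entries}})\in\Gamma.
\]

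The central step is to feed $t\mu_{0}$ into the supergradient inequality
\[
f(t\mu_{0})\leq f(\lambda)+\sum_{i=1}^{n}f_{i}(\lambda)\bigl(t(\mu_{0})_{i}-\lambda_{i}\bigr),
\]
divide by $t$, and send $t\to\infty$. The concave function $t\mapsto f(t\mu_{0})$ has nonnegative asymptotic slope: when $\sup_{\Gamma}f<\infty$, hypothesis \eqref{addistruc} forces $f(t\mu_{0})/t\to 0$; when $\sup_{\Gamma}f=\infty$, concavity still yields $\liminf f(t\mu_{0})/t\geq 0$. What survives is the cone-dual relation
\[
\sum_{i=1}^{n}f_{i}(\lambda)(\mu_{0})_{i}\geq 0,\qquad\text{i.e.}\qquad a\sum_{i=1}^{\kappa_{\Gamma}}f_{i}(\lambda)\leq b\sum_{i=\kappa_{\Gamma}+1}^{n}f_{i}(\lambda).
\]
Combined with the bound $\sum_{i=\kappa_{\Gamma}+1}^{n}f_{i}(\lambda)\leq(n-\kappa_{\Gamma})f_{\kappa_{\Gamma}+1}(\lambda)$ from the ordering, this yields $\sum_{j=1}^{n}f_{j}(\lambda)\leq(1+b/a)(n-\kappa_{\Gamma})\,f_{\kappa_{\Gamma}+1}(\lambda)$, which is \eqref{k+1elliptic} with the universal constant $\vartheta_{\Gamma}=a/[(a+b)(n-\kappa_{\Gamma})]$.

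For the sharpness claim I would exhibit a sequence $\lambda^{(\ell)}\in\Gamma$ approaching a boundary face of $\Gamma$ on which the first $\kappa_{\Gamma}+1$ coordinate directions degenerate: along such a sequence $f_{1},\ldots,f_{\kappa_{\Gamma}+1}$ blow up relative to $f_{\kappa_{\Gamma}+2},\ldots,f_{n}$, so that $f_{\kappa_{\Gamma}+2}(\lambda^{(\ell)})/\sum_{j}f_{j}(\lambda^{(\ell)})\to 0$, ruling out any extension of \eqref{k+1elliptic} beyond the index $\kappa_{\Gamma}+1$.

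The main obstacle is the passage to the cone-dual inequality in the non-homogeneous setting: without Euler's identity one is forced to use \eqref{addistruc} delicately both to control $f(t\mu_{0})/t$ in the limit and to absorb the residual terms $f(\lambda)$ and $-\sum_{i}f_{i}(\lambda)\lambda_{i}$ after dividing by $t$. Any slack at this step would replace the universal $\vartheta_{\Gamma}$ by a $\lambda$-dependent constant and destroy the content of the statement; a secondary technical point is to verify that the averaging yielding $(a,b)$ can be done so that the ratio $a/(a+b)$ is bounded below by a geometric quantity intrinsic to $\Gamma$.
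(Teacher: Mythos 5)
Your argument is correct and follows essentially the same route as the paper's own proof (given for Proposition \ref{yuan-k+1-weak}, which also fixes the constant in Lemma \ref{yuan-k+1}): the concavity inequality evaluated at $t\mu$ with $t\to\infty$, combined with \eqref{addistruc}, yields $\sum_i f_i(\lambda)\mu_i\ge 0$ for the test vector $\mu$ coming from the definition of $\kappa_\Gamma$, and the ordering $f_1(\lambda)\ge\cdots\ge f_n(\lambda)$ then produces the uniform lower bound; your symmetrization of the test vector to $(-a,\dots,-a,b,\dots,b)$ and the division by $t$ are cosmetic variants of the paper's Lemma \ref{lemma3.4} and its iteration. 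The only loose ends are minor: the case $\kappa_\Gamma=0$ should be dispatched separately by $f_1\ge\frac1n\sum_j f_j$, the bound $\liminf_{t\to\infty}f(t\mu_0)/t\ge 0$ when $\sup_\Gamma f=\infty$ really uses \eqref{addistruc} (to exclude $f(t\mu_0)\to-\infty$) and not concavity alone, and the sharpness assertion is only sketched — as it is in the paper, which cites it from \cite{yuan2020conformal}.
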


 Throughout this present paper, $\vartheta_{\Gamma}$ and $\kappa_\Gamma$ always stand
  for the constants  in Lemma \ref{yuan-k+1} and Definition \ref{yuan-kappa}, respectively.

We now impose an appropriate and delicate restriction to 
the parameters in $A_g^{\tau,\alpha}$: 
\begin{equation}
\label{tau-alpha}
 \begin{aligned}
\,& \tau<1, \,&\mbox{ if } \alpha=-1; \\
\,& \tau>1+(n-2)(1-\kappa_\Gamma\vartheta_{\Gamma}), \,&\mbox{ if } \alpha=1,
 \end{aligned}
 \end{equation}
which further yields 
 \begin{equation}
 \label{positive1}
 \begin{aligned}
 \alpha(n\tau+2-2n)>0.
 \end{aligned}
 \end{equation}
 (It would be worthwhile to note that if $\alpha=1$ equation \eqref{main-equ1} fails to be elliptic for general $\tau<n-1$, $\tau\neq1$; surprisingly assumption \eqref{tau-alpha}
 allows $1+(n-2)(1-\kappa_\Gamma\vartheta_{\Gamma})<\tau< n-1$ in the case $\Gamma\neq\Gamma_n$).
Together with Lemma \ref{yuan-k+1} we can 
propose an effective way to understand the structure of linearized operator for equation \eqref{main-equ1} and then establish the local estimates for gradient and Hessian of solutions (Theorem \ref{thm1-local}).
 With such local estimates at hand, the remaining goal is to derive local zero order estimate for solutions to approximate Dirichlet problems. 

In an attempt to derive local bound of solutions from below, we assume there exists an 
\textit{admissible} complete (noncompact) conformal metric $\underline{g}\in [g]$
satisfying
\begin{equation}
 \label{admissible-metric1}
 \begin{aligned}
 \lambda(\underline{g}^{-1}A_{\underline{g}}^{\tau,\alpha})\in \Gamma \mbox{ in } M,
 \end{aligned}
 \end{equation}
  where $[g]=\left\{e^{2v}g: v\in C^2(M)\right\}$ is the $C^2$-smooth complete conformal class of $g$,
and that such
 $\underline{g}$ further satisfies a key asymptotic property at infinity: There exists a compact subset $K_0$ of $M$ and a constant $\Lambda_0>0$ such that
\begin{equation}
\label{key-assum1}
 \begin{aligned}
 \frac{f(\lambda(\underline{g}^{-1}A_{\underline{g}}^{\tau,\alpha}))}{\psi}\geq \Lambda_0  \mbox{ holds uniformly in $M\setminus K_0$.}
 \end{aligned}
 \end{equation}
 This asymptotic property near infinity can be viewed as a subsolution, which is only used to derive local bound for approximate solutions from below  (Theorem \ref{thm-c0-lower}).
 The assumption \eqref{key-assum1} is in effect sufficient and necessary for the existence result:
 \begin{itemize}
\item  The metric we expect to obtain in Theorem \ref{thm1} tautologically obeys \eqref{key-assum1}.
\item There is a counterexample on nonexistence of such a complete conformal metric, in which \eqref{key-assum1} is not satisfied. For some nonexistence results on scalar and negative Ricci curvatures we are referred for example to \cite{Jin1993,Ni-82Indiana,Sui2017JGA}. 
\end{itemize}

 For the rest issue of deriving local bound of solutions from above,
  it requires to construct supersolutions. 
  In this paper 
 we apply a theorem 
 due to Aviles-McOwen \cite{Aviles1988McOwen}, which extensively extends a famous result of  Loewner-Nirenberg \cite{Loewner1974Nirenberg},
  or its fully nonlinear version 
   as shown in Theorem \ref{existence1-compact} below (partially proved in \cite{yuan2020conformal})
  to achieve the goal and thus provide 
 a straightforward and simple approach to establish local bound of approximate solutions from above (Theorem \ref{thm-c0-upper}). 
It would be worthwhile to note that our strategy 
does work without any further 
geometric restrictions to underlying manifolds as well as to prescribed curvature function,
 to be compared with those in \cite{Aviles1985McOwen,Jin1993} for scalar curvature equation and in \cite{FuShengYuan} for a generalization to $\sigma_k$ curvature equations for some $(0,2)$-type tensors.
 This is new even for Euclidean spaces, and it also works for prescribed scalar curvature equation (see Theorem \ref{thm-scalarcurvature}).

 As a result, we prove the main result. (Throughout the paper, we always assume that $f$ satisfies \eqref{elliptic}, \eqref{concave}, \eqref{homogeneous-1}, and  $\psi$ is the prescribed curvature function without specific clarification).
 
\begin{theorem}
\label{thm1}
For the  $\tau$ satisfying \eqref{tau-alpha}, we assume
 there is a complete conformal metric to satisfy \eqref{admissible-metric1} and \eqref{key-assum1}.
Then for such $\tau$,  
there exists at least one smooth function $u_\infty\in C^\infty(M)$ such that $g_\infty=e^{2u_\infty}g$ is a smooth  admissible complete
conformal metric on $M$ satisfying \eqref{main-equ1}.
\end{theorem}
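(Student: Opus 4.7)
The plan is to realize $u_\infty$ as the limit of an exhausting family of approximate solutions. Fix an exhaustion $M_1 \Subset M_2 \Subset \cdots$ of $M$ by relatively compact domains with smooth boundary and $\bigcup_j M_j = M$. On each $M_j$, invoke the fully nonlinear Loewner--Nirenberg-type result of Theorem~\ref{existence1-compact} to produce a smooth admissible conformal metric $g_j = e^{2u_j}g$ on $M_j$ which is complete (so $u_j \to +\infty$ at $\partial M_j$) and satisfies $f(\lambda(g_j^{-1}A_{g_j}^{\tau,\alpha})) = \psi$ in $M_j$. These $u_j$ serve as the approximate solutions.

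For the local upper bound on a fixed compact $K \subset M$, I would compare $u_j$ with $u_{j+1}$ on $M_j$: since $u_j = +\infty$ on $\partial M_j$ while $u_{j+1}$ remains smooth on $\overline{M_j}$, the standard comparison principle for the concave elliptic operator (available by \eqref{elliptic}--\eqref{concave} together with the admissibility restriction encoded in \eqref{tau-alpha}) forces $u_j \geq u_{j+1}$ on $M_j$. Hence $\{u_j\}$ is eventually monotone decreasing on $K$, giving a uniform local ceiling, which is the content of Theorem~\ref{thm-c0-upper}. For the local lower bound I would use the asymptotic subsolution $\underline{g} = e^{2\underline{u}}g$ supplied by \eqref{admissible-metric1}--\eqref{key-assum1}: at an interior maximum of $\underline{u} - u_j$ lying outside the compact set $K_0$, the structural inequality \eqref{key-assum1} combined with concavity forces a contradiction, so the maximum is attained in $K_0$ and bounded. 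This yields $u_j \geq \underline{u} - C$ for a constant $C$ independent of $j$ (Theorem~\ref{thm-c0-lower}).

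With uniform local $C^0$-bounds, the local gradient and Hessian estimates of Theorem~\ref{thm1-local}, which crucially exploit partial uniform ellipticity via Lemma~\ref{yuan-k+1} and the parameter range \eqref{tau-alpha}, provide uniform local $C^2$ bounds. Evans--Krylov together with the concavity \eqref{concave} then upgrades these to uniform $C^{2,\alpha}_{\mathrm{loc}}$ estimates, and a Schauder bootstrap delivers uniform $C^{k,\alpha}_{\mathrm{loc}}$ estimates for every $k$. A standard diagonal subsequence argument produces a smooth limit $u_\infty \in C^\infty(M)$ solving $f(\lambda(\tilde{g}^{-1}A_{\tilde{g}}^{\tau,\alpha})) = \psi$ on all of $M$. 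The lower bound $u_\infty \geq \underline{u} - C$ transfers completeness from $\underline{g}$ to $g_\infty = e^{2u_\infty}g$, finishing the proof.

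The main obstacle, in my view, is the local lower bound. The monotonicity from the Loewner--Nirenberg side furnishes the upper bound almost tautologically once Theorem~\ref{existence1-compact} is granted, but turning the asymptotic inequality \eqref{key-assum1} into an honest comparison statement requires careful handling of the exceptional compact set $K_0$ as well as the delicate structure of the operator under the parameter restriction \eqref{tau-alpha}; it is here that the partial uniform ellipticity of Lemma~\ref{yuan-k+1} is essential, and it is this step that is ultimately responsible for \eqref{key-assum1} being effectively optimal.
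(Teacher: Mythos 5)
Your proposal is correct and follows essentially the same route as the paper: exhaust $M$, use the fully nonlinear Loewner--Nirenberg-type Theorem \ref{existence1-compact} for the local upper control, the asymptotic subsolution \eqref{key-assum1} (with the compact set $K_0$ absorbed into a uniform constant, as in \eqref{key-assum2}) for the local lower bound, then Theorem \ref{thm1-local}, Evans--Krylov, Schauder and a diagonal argument, with completeness of $g_\infty$ inherited from $\underline{g}$ via the lower bound. The only minor differences are cosmetic: you take the boundary-blow-up solutions on $M_j$ themselves as approximants and get the upper bound from monotonicity, whereas the paper solves Dirichlet problems with finite boundary data $\underline{u}$ on $\partial M_k$ and uses the blow-up solution on a fixed $M_m$ only as an upper barrier; and at an interior maximum of $\underline{u}-u_j$ outside $K_0$ one does not literally reach a contradiction but directly the bound $(\underline{u}-u_j)\leq -\tfrac{1}{2}\log\Lambda_0$, which is all that is needed.
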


This theorem asserts the existence of a smooth \textit{admissible} complete conformal metric solving the prescribed curvature equation \eqref{main-equ1}, provided that there is a $C^2$-smooth \textit{admissible} complete conformal metric $\underline{g}$ 
satisfying \eqref{key-assum1}.
That is, all of the geometric and analytic obstructions to the 
solvability of 
\eqref{main-equ1} are in fact embodied in the assumption of the asymptotic condition \eqref{key-assum1}.

Immediately we have Theorem \ref{thm2-ricci}, Corollaries \ref{coro1-4} and \ref{coro1-2}. Together with 
\eqref{GSW1},
we further derive Theorem \ref{thm3} and Corollaries \ref{coro1-1}, \ref{coro1-3}.

\begin{corollary}
\label{coro1-4}
Let $0<\psi\in C^\infty(M)$, and we assume
  in addition that $Ric_g \leq -\delta\psi$ for some constant $\delta>0$.
Then $M$ admits a smooth complete conformal metric $\tilde{g}=e^{2u}g$ 
 satisfying $f(\lambda(-\tilde{g}^{-1}Ric_{\tilde{g}}))=\psi$, $\lambda(-\tilde{g}^{-1}Ric_{\tilde{g}})\in\Gamma$.
  \end{corollary}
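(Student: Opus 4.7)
The plan is to deduce the corollary directly from Theorem~\ref{thm1} by choosing the parameter pair $\tau=0$, $\alpha=-1$, for which
\[
A_g^{0,-1}\;=\;-\tfrac{1}{n-2}\,Ric_g.
\]
The restriction \eqref{tau-alpha} in the case $\alpha=-1$ reads $\tau<1$, so $\tau=0$ is admissible. By the one-homogeneity \eqref{homogeneous-1}, the equation $f(\lambda(-\tilde g^{-1}Ric_{\tilde g}))=\psi$ in the statement of the corollary is equivalent to
\[
f\!\left(\lambda(\tilde g^{-1}A_{\tilde g}^{0,-1})\right)\;=\;\tfrac{\psi}{n-2},
\]
and the two notions of admissibility agree. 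Thus it suffices to verify the hypotheses of Theorem~\ref{thm1} with target function $\tilde\psi:=\psi/(n-2)$.

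The natural candidate for the subsolution $\underline{g}$ required by Theorem~\ref{thm1} is $g$ itself, which is complete by assumption. The hypothesis $Ric_g\leq-\delta\psi$, interpreted as the tensorial inequality $Ric_g+\delta\psi\, g\leq 0$, forces every eigenvalue of $-g^{-1}Ric_g$ to be at least $\delta\psi>0$, and hence every component of $\lambda(g^{-1}A_g^{0,-1})=-\tfrac{1}{n-2}\lambda(g^{-1}Ric_g)$ to be at least $\delta\psi/(n-2)$. This places $\lambda(g^{-1}A_g^{0,-1})$ in $\Gamma_n\subseteq\Gamma$, verifying \eqref{admissible-metric1}.

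For the asymptotic estimate \eqref{key-assum1}, I would combine the monotonicity \eqref{elliptic} with \eqref{homogeneous-1}. Since $\lambda(g^{-1}A_g^{0,-1})\geq\tfrac{\delta\psi}{n-2}\vec{\bf 1}$ componentwise and the entire segment joining the two vectors lies in $\Gamma_n\subseteq\Gamma$, monotonicity yields
\[
f\!\left(\lambda(g^{-1}A_g^{0,-1})\right)\;\geq\; f\!\left(\tfrac{\delta\psi}{n-2}\,\vec{\bf 1}\right)\;=\;\tfrac{\delta\psi}{n-2},
\]
so that $f(\lambda(g^{-1}A_g^{0,-1}))/\tilde\psi\geq\delta$ \emph{uniformly on all of $M$}, which is strictly stronger than the asymptotic lower bound \eqref{key-assum1} (one may take $K_0=\emptyset$ and $\Lambda_0=\delta$). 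Applying Theorem~\ref{thm1} then produces a smooth admissible complete conformal metric $\tilde g=e^{2u}g$ solving $f(\lambda(\tilde g^{-1}A_{\tilde g}^{0,-1}))=\tilde\psi$, and rescaling by the factor $n-2$ via \eqref{homogeneous-1} delivers the metric claimed in the corollary.

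I do not anticipate any genuine obstacle. The only conceptual points are to identify the correct $(\tau,\alpha)$, to check the mild inequality $\tau<1$ in \eqref{tau-alpha}, and to recognise that the hypothesis $Ric_g\leq-\delta\psi$ is tailored precisely so that $g$ itself serves as an admissible subsolution with the required asymptotic lower bound; all analytic weight has already been packaged inside Theorem~\ref{thm1}.
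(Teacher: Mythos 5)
Your proposal is correct and takes essentially the same route as the paper, which states Corollary \ref{coro1-4} as an immediate consequence of Theorem \ref{thm1} with $(\tau,\alpha)=(0,-1)$ and $\underline{g}=g$ serving as the admissible complete conformal metric, exactly as you do. Your verification of \eqref{admissible-metric1} and \eqref{key-assum1} (with $\Lambda_0=\delta$, using monotonicity, one-homogeneity and the normalization $f(\vec{\bf 1})=1$, plus the harmless factor $n-2$) is precisely the omitted computation.
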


\begin{corollary}
\label{coro1-2}
Assume $\psi$ is a smooth positive function. 
Suppose in addition that $Ric_g\leq 0$, $R_g\leq -\delta\psi$ for some constant $\delta>0$.
Then for such  $\psi$ and for $\alpha=-1$, $\tau< 0$,
there exists a smooth complete conformal metric 
 satisfying \eqref{main-equ1}.
 \end{corollary}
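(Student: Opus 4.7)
The plan is to apply Theorem \ref{thm1} with the original background metric $g$ itself serving as the admissible subsolution $\underline{g}$. The parameter condition \eqref{tau-alpha} is immediate: with $\alpha=-1$ we only require $\tau<1$, which is given by $\tau<0$. The only real work is a sign analysis of the modified Schouten-type tensor $A_g^{\tau,-1}=\frac{1}{n-2}\bigl(-Ric_g+\frac{\tau R_g}{2(n-1)}g\bigr)$. The hypothesis $Ric_g\leq 0$ makes $-Ric_g$ positive semi-definite as a $(0,2)$-tensor. Multiplying $R_g\leq -\delta\psi<0$ by the negative number $\tau$ reverses the inequality and yields $\tau R_g\geq |\tau|\delta\psi$. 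Summing these two nonnegative contributions gives the symmetric tensor inequality
$$A_g^{\tau,-1}\geq \frac{|\tau|\delta\,\psi}{2(n-1)(n-2)}\,g.$$
Consequently every eigenvalue of $g^{-1}A_g^{\tau,-1}$ is at least $c\psi$ with $c:=\frac{|\tau|\delta}{2(n-1)(n-2)}>0$, so $\lambda(g^{-1}A_g^{\tau,-1})\in \Gamma_n\subset\Gamma$ and \eqref{admissible-metric1} holds.

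For the asymptotic condition \eqref{key-assum1}, I would combine the monotonicity \eqref{elliptic}, the degree-one homogeneity \eqref{homogeneous-1} and the normalization $f(\vec{\bf 1})=1$: since $\lambda(g^{-1}A_g^{\tau,-1})\geq c\psi\,\vec{\bf 1}$ componentwise, one has $f(\lambda(g^{-1}A_g^{\tau,-1}))\geq f(c\psi\,\vec{\bf 1})=c\psi$. Thus $f/\psi \geq c>0$ everywhere on $M$, so \eqref{key-assum1} holds trivially with $K_0=\emptyset$ and $\Lambda_0=c$. Completeness of $\underline{g}=g$ is part of the standing assumption on $(M,g)$. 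All hypotheses of Theorem \ref{thm1} being verified, we obtain the desired smooth admissible complete conformal metric $\tilde g=e^{2u}g$ solving \eqref{main-equ1}.

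The main --- indeed only --- non-routine point is recognizing that the hypotheses $Ric_g\leq 0$ and $R_g\leq -\delta\psi$ are precisely what is needed to make both terms defining $A_g^{\tau,-1}$ pull in the same (positive) direction under the sign choices $\alpha=-1$ and $\tau<0$. Once this is observed, the verification of \eqref{admissible-metric1} and \eqref{key-assum1} reduces to the elementary pointwise inequality displayed above, and the heavy analytic machinery is carried entirely by Theorem \ref{thm1}.
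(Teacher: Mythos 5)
Your proposal is correct and follows the same route as the paper, which deduces Corollary \ref{coro1-2} immediately from Theorem \ref{thm1} by taking $\underline{g}=g$: the hypotheses $Ric_g\leq 0$, $R_g\leq-\delta\psi$ together with $\alpha=-1$, $\tau<0$ give exactly the pointwise bound $A_g^{\tau,-1}\geq \frac{|\tau|\delta\psi}{2(n-1)(n-2)}g$, which verifies \eqref{admissible-metric1} and \eqref{key-assum1} (with $\Lambda_0=\frac{|\tau|\delta}{2(n-1)(n-2)}$), while \eqref{tau-alpha} is immediate since $\tau<0<1$.
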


\begin{corollary}
\label{coro1-1}
Let $\alpha=1$, $\tau> n-1$.
Suppose $Sec_g \leq0$ and 
 $R_g\leq-\delta\psi$  for a smooth positive function $\psi$ for some constant $\delta>0$.
Then for such $\psi$, 
there is a smooth complete conformal metric $\tilde{g}=e^{2u}g$ solving \eqref{main-equ1}.
 \end{corollary}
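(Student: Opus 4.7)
The strategy is to apply Theorem \ref{thm1} with the background metric itself playing the role of the subsolution (i.e.\ $\underline{g}=g$), which reduces the problem to verifying the admissibility $\lambda(g^{-1}A_g^{\tau,1})\in\Gamma$ and the asymptotic bound $f(\lambda(g^{-1}A_g^{\tau,1}))/\psi\geq \Lambda_0>0$ on $M$, in which case \eqref{key-assum1} holds with $K_0=\emptyset$. Completeness of $\underline{g}=g$ is given by hypothesis; once the two pointwise conditions are established, Theorem \ref{thm1} produces the desired smooth admissible complete conformal metric $\tilde{g}=e^{2u}g$ solving \eqref{main-equ1}. Note also that $\tau>n-1$ automatically satisfies \eqref{tau-alpha} since $1+(n-2)(1-\kappa_\Gamma\vartheta_\Gamma)\leq n-1$.

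The core of the argument is a pointwise eigenvalue estimate for $g^{-1}A_g^{\tau,1}$. In a local orthonormal frame $\{e_i\}$ diagonalizing $g^{-1}Ric_g$, set $\mu_i=Ric_g(e_i,e_i)$ and $K_{ij}=Sec_g(e_i\wedge e_j)$. The identities $\mu_i=\sum_{k\neq i}K_{ik}$ and $R_g/2=\sum_{j<k}K_{jk}=\mu_i+T_i$, where $T_i:=\sum_{j<k,\,j,k\neq i}K_{jk}$, give
\begin{equation*}
\lambda_i(g^{-1}A_g^{\tau,1})=\frac{1}{n-2}\left(\mu_i-\frac{\tau R_g}{2(n-1)}\right)=\frac{1}{n-2}\left(\frac{\tau|R_g|}{2(n-1)}-|\mu_i|\right).
\end{equation*}
Since $Sec_g\leq 0$ forces every $K_{ij}\leq 0$ and hence $\mu_i\leq 0$, $T_i\leq 0$, while $|R_g|/2=|\mu_i|+|T_i|$, one obtains the sharp inequality $|\mu_i|\leq|R_g|/2$. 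Combined with $\tau>n-1$ this yields
\begin{equation*}
\lambda_i(g^{-1}A_g^{\tau,1})\geq \frac{\tau-(n-1)}{2(n-1)(n-2)}\,|R_g|\geq \frac{(\tau-(n-1))\,\delta}{2(n-1)(n-2)}\,\psi,
\end{equation*}
so $\lambda(g^{-1}A_g^{\tau,1})\in\Gamma_n\subset\Gamma$ on all of $M$, which is admissibility.

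Monotonicity \eqref{elliptic}, homogeneity of degree one \eqref{homogeneous-1}, and the normalization $f(\vec{\bf 1})=1$ then upgrade the eigenvalue estimate to
\begin{equation*}
f(\lambda(g^{-1}A_g^{\tau,1}))\geq \frac{(\tau-(n-1))\,\delta}{2(n-1)(n-2)}\,\psi\quad\text{on }M,
\end{equation*}
which is exactly \eqref{key-assum1} with $\Lambda_0=(\tau-(n-1))\delta/[2(n-1)(n-2)]$ and $K_0=\emptyset$. Theorem \ref{thm1} then completes the proof. The only delicate point is the sharp bound $|\mu_i|\leq|R_g|/2$: the naive estimate $|\mu_i|\leq|R_g|$ coming from non-positivity of every $\mu_j$ is too weak in the subrange $n-1<\tau<2(n-1)$, but the improved factor $1/2$, arising from non-positivity of the $i$-free pair-sum $T_i$, precisely matches the parameter threshold $\tau>n-1$ imposed by the corollary.
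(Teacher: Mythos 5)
Your proposal is correct and follows exactly the route the paper intends: the corollary is derived from Theorem \ref{thm1} with $\underline{g}=g$ as the subsolution, using the identity \eqref{GSW1} (your $\mu_i=\tfrac{R_g}{2}-T_i$) together with $Sec_g\leq0$ to get $\mu_i\geq \tfrac{R_g}{2}$, hence $\lambda_i(g^{-1}A_g^{\tau,1})\geq \tfrac{(\tau-(n-1))\delta}{2(n-1)(n-2)}\psi$, which gives both admissibility and \eqref{key-assum1} (and $\tau>n-1$ indeed implies \eqref{tau-alpha}). The paper states this only as an immediate consequence of Theorem \ref{thm1} and \eqref{GSW1}; your write-up supplies precisely the omitted details, including the correct observation that the factor $\tfrac12$ from \eqref{GSW1} is what matches the threshold $\tau>n-1$.
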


\begin{corollary}
\label{coro1-3}
Let $0<\psi\in C^\infty(M)$.
Suppose $\Gamma\neq\Gamma_n$ and $Sec_g\leq -\delta\psi$ for some constant $\delta>0$.
Then for such $\psi$,
there is a smooth complete conformal metric $\tilde{g}=e^{2u}g$ with
$ \lambda(\tilde{g}^{-1}G_{\tilde{g}})\in\Gamma$ solving 
$f(\lambda(\tilde{g}^{-1}G_{\tilde{g}}))=\psi \mbox{ in } M.$
 \end{corollary}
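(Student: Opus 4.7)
The plan is to deduce this corollary from Theorem \ref{thm1} by specializing to the parameters $\alpha=1$ and $\tau=n-1$, for which $A_g^{\tau,\alpha}=\frac{1}{n-2}G_g$ and, by the homogeneity \eqref{homogeneous-1} of $f$, the equation $f(\lambda(\tilde{g}^{-1}G_{\tilde{g}}))=\psi$ is equivalent to the main equation \eqref{main-equ1} with prescribed function $\psi/(n-2)$.

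First I would verify that $(\tau,\alpha)=(n-1,1)$ lies in the admissible range \eqref{tau-alpha}. Since the inclusion $\Gamma_n\subseteq\Gamma$ is strict under the hypothesis $\Gamma\neq\Gamma_n$, an elementary convexity argument shows that $\Gamma$ contains a vector of the form $(-\alpha_1,\alpha_2,\ldots,\alpha_n)$ with all $\alpha_j>0$, so $\kappa_\Gamma\geq 1$ by Definition \ref{yuan-kappa}. Combined with the universal positivity of $\vartheta_\Gamma$ in Lemma \ref{yuan-k+1}, this yields $\kappa_\Gamma\vartheta_\Gamma>0$, which is precisely the inequality $n-1>1+(n-2)(1-\kappa_\Gamma\vartheta_\Gamma)$ required by \eqref{tau-alpha}.

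The key step, and the one I expect to be the crux of the argument, is to show that the background metric $g$ itself plays the role of the admissible complete subsolution $\underline{g}$ in \eqref{admissible-metric1}--\eqref{key-assum1}. For this I would invoke the pointwise identity recorded in the paper as \eqref{GSW1}, which in any local orthonormal frame reads
\[ G_g(e_k,e_k)=-\sum_{i<j,\ i,j\neq k} K(e_i,e_j). \]
The hypothesis $Sec_g\leq -\delta\psi$ then forces $G_g\geq\tfrac{(n-1)(n-2)}{2}\delta\psi\, g$ as symmetric $2$-tensors, so that every eigenvalue of $g^{-1}A_g^{n-1,1}=\frac{1}{n-2}g^{-1}G_g$ is at least $\tfrac{(n-1)\delta}{2}\psi$. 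This gives $\lambda(g^{-1}A_g^{n-1,1})\in\Gamma_n\subseteq\Gamma$, settling \eqref{admissible-metric1}, while monotonicity \eqref{elliptic} and homogeneity \eqref{homogeneous-1} of $f$ yield
\[ f(\lambda(g^{-1}A_g^{n-1,1}))\geq\tfrac{(n-1)\delta}{2}\psi=\tfrac{(n-1)(n-2)\delta}{2}\cdot\tfrac{\psi}{n-2}, \]
so \eqref{key-assum1} holds uniformly on all of $M$ (with $K_0=\emptyset$) when the right-hand side is the rescaled function $\psi/(n-2)$.

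Finally I would apply Theorem \ref{thm1} with $(\tau,\alpha)=(n-1,1)$, $\underline{g}=g$, and prescribed function $\psi/(n-2)$ to produce a smooth admissible complete conformal metric $\tilde{g}=e^{2u}g$ satisfying $f(\lambda(\tilde{g}^{-1}A_{\tilde{g}}^{n-1,1}))=\psi/(n-2)$; the homogeneity \eqref{homogeneous-1} of $f$ then rewrites this as $f(\lambda(\tilde{g}^{-1}G_{\tilde{g}}))=\psi$, which is the desired conclusion. The only nontrivial ingredient is the sectional-curvature identity invoked in the middle step; once that is in hand, every remaining piece is a direct verification of the hypotheses of Theorem \ref{thm1}.
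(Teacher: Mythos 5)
Your proposal is correct and follows essentially the same route as the paper: the corollary is obtained from Theorem \ref{thm1} with $(\tau,\alpha)=(n-1,1)$, so that $A_g^{\tau,\alpha}=\frac{1}{n-2}G_g$, using the identity \eqref{GSW1} and $Sec_g\leq-\delta\psi$ to check that $\underline{g}=g$ itself satisfies \eqref{admissible-metric1} and \eqref{key-assum1}, and using $\Gamma\neq\Gamma_n$ (hence $\kappa_\Gamma\geq1$, $\vartheta_\Gamma>0$) to place $\tau=n-1$ inside the range \eqref{tau-alpha}. The only cosmetic point is that $\kappa_\Gamma\geq1$ for $\Gamma\neq\Gamma_n$ comes from the openness and symmetry of $\Gamma$ rather than convexity, exactly as used in the paper's proof of Proposition \ref{yuan-k+1-weak}.
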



\begin{remark}
Notice the rescaling invariance: $A_g^{\tau,\alpha}=A_{t g}^{\tau,\alpha}$ for all constants $t>0$. The constant $\Lambda_0$ in 
\eqref{key-assum1} can be chosen as $\Lambda_0=1$.
\end{remark}

\begin{remark}
In Theorem \ref{thm1}, $\tau$ can be replaced by a smooth function obeying \eqref{tau-alpha}.
\end{remark}

The article is organized as follows. 
In Section \ref{preliminaries} we collect some useful notation and formulas, and briefly discuss some results on the partial uniform ellipticity.
In Section \ref{section3} we obtain local estimates for gradient and Hessian by confirming the fully uniform ellipticity of a more general class of fully nonlinear elliptic equations which includes equation \eqref{main-equ1} as a special case. 
Using the local estimate, we obtain existence results of solutions for such equations when the background space is a compact manifold with boundary. 
In Section \ref{section4} we derive local $C^0$-estimate for approximate solutions and 
complete the proof of our main existence theorem by using an approximate process.
In Section \ref{section5},  by confirming the asymptotic property near infinity \eqref{key-assum1},
 we obtain  on Euclidean spaces
the existence of smooth  complete metrics of prescribed curvature functions with a restriction to decay ratio. 
In Section \ref{section6} we briefly study the conformal deformation of $-A_g$ 
and then apply it to Einstein tensor and certain modified Schouten tensors by constructing certain cones of  type 2.
 
\section{Preliminaries}
\label{preliminaries}

\subsection{Notation}

Let $e_1,...,e_n$ be a local frame on $M$. We denote $g_{ij }= g(e_i,e_j), ({g^{ij} })= ({g_{ij}})^{-1}$. 
Under Levi-Civita connection $\nabla$ of $(M,g)$, 
$\nabla_{e_i}e_j=\Gamma_{ij}^k e_k$, and $\Gamma_{ij}^k$ denote the Christoffel symbols and $\Gamma_{ij}^k=\Gamma_{ji}^k$.  For simplicity we write
$\nabla_i=\nabla_{e_i}, \nabla_{ij}=\nabla_i(\nabla_j)-\Gamma_{ij}^k\nabla_k.
$
On $(M,g)$ one also defines the curvature tensor by $$R(X,Y)Z=-\nabla_X\nabla_YZ+\nabla_Y\nabla_X Z+\nabla_{[X,Y]}Z.$$ 

Under a locally unit orthogonal frame $e_1,\cdots, e_n$, 
i.e. $g_{ij}=\delta_{ij}$,  
\begin{equation}  \begin{aligned}
\,& Ric_g(e_i,e_i)=\sum_{j=1}^n g(R(e_i,e_j)e_i,e_j), 
\,& R_g=\sum_{i=1}^n Ric_g(e_i,e_i), \nonumber
 \end{aligned}  \end{equation}
thus one can check  
\begin{equation} \label{GSW1} \begin{aligned}
  Ric_g(e_i,e_i)=\frac{R_g}{2}-\frac{1}{2}\sum_{k,l\neq i} g(R(e_k,e_l)e_k,e_l), \mbox{  }  \forall 1\leq i\leq n.
 \end{aligned}  \end{equation}

\subsection{Some formulas for conformal change}
Under the conformal change $\tilde{g}=e^{2u}g$,
 \begin{equation}
 \begin{aligned}
 Ric_{\tilde{g}}=\,& Ric_g -\Delta u g -(n-2)\nabla^2u-(n-2)|\nabla u|^2g +(n-2)du\otimes du, \nonumber
 \end{aligned}
 \end{equation}
  \begin{equation}
 \begin{aligned}
 e^{2u}R_{\tilde{g}}=\,& R_g-2(n-1)\Delta u-(n-1)(n-2)|\nabla u|^2, \nonumber
 \end{aligned}
 \end{equation}
 where and hereafter $\Delta u$, $\nabla^2 u$ and $\nabla u$ are respectively the Laplacian, Hessian and gradient of $u$ with respect to $g$, $|\nabla u|^2=g^{ij}\nabla_i u \nabla_j u$.
 Thus
\begin{equation}
\label{conformal-formula1}
 \begin{aligned}
 A_{\tilde{g}}^{\tau,\alpha}
 =\,& A_{g}^{\tau,\alpha}
 +\frac{\alpha(\tau-1)}{n-2}\Delta u g-\alpha  \nabla^2 u
  +\frac{\alpha(\tau-2)}{2}|\nabla u|^2 g
  +\alpha  du\otimes du. 
 \end{aligned}
 \end{equation}
  
\subsection{Existence of  complete conformal metric with scalar curvature $-1$}
 A  key tool 
 is a result due to 
  Aviles-McOwen \cite{Aviles1988McOwen} who extended a theorem of Loewner-Nirenberg \cite{Loewner1974Nirenberg} from smooth bounded domains $\Omega\subset\mathbb{R}^n$ to general compact Riemannian manifolds with boundary.  
 Namely,
\begin{theorem}
\label{thm1-AM}
Let $(M,g)$ be a compact Riemannian manifold with smooth boundary, then
 $(M,g)$ has a complete conformal metric with scalar curvature $-1$.

\end{theorem}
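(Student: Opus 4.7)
The plan is to reformulate the problem as a semilinear elliptic equation with blow-up boundary data and to solve it via the Loewner--Nirenberg approximation method. Writing $\tilde g = v^{4/(n-2)} g$ with $v > 0$, the requirement $R_{\tilde g} \equiv -1$ becomes
\begin{equation*}
c_n \Delta_g v - R_g v = v^{(n+2)/(n-2)} \quad \text{on } \mathrm{int}(M),
\end{equation*}
where $c_n = 4(n-1)/(n-2)$, while completeness of $\tilde g$ requires $v$ to blow up at $\partial M$ at a rate at least $c\, d(x)^{-(n-2)/2}$ for some $c > 0$, with $d(x) = \mathrm{dist}_g(x,\partial M)$. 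Since $M$ has nonempty boundary, a preliminary conformal change (solve a Dirichlet problem for the conformal Laplacian) allows us to assume $R_g < 0$ throughout $M$.

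The approximation step is to solve, for each large $k$, the Dirichlet problem above with $v_k|_{\partial M} = k$. Under $R_g < 0$, the constants $(-\min_M R_g)^{(n-2)/4}$ and $k$ respectively serve as a sub- and a supersolution, so monotone iteration combined with Schauder theory produces a smooth positive solution $v_k$; the comparison principle shows that $k \mapsto v_k$ is monotonically increasing. The central task is then to obtain uniform interior bounds on $v_k$ independent of $k$, which would permit passage to the limit.

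For this I would construct local barriers on a collar neighborhood $U_\delta = \{d < \delta\}$ of $\partial M$, modeled on the explicit Loewner--Nirenberg solution on Euclidean half-space:
\begin{equation*}
\overline v(x) = A\, d(x)^{-(n-2)/2}, \qquad \underline v(x) = a\, d(x)^{-(n-2)/2},
\end{equation*}
with constants $A > a > 0$. Using $|\nabla d|_g \equiv 1$ near $\partial M$ and the boundedness of $\Delta_g d$ there, a direct computation shows that the dominant singular term $v^{(n+2)/(n-2)}$, of order $d^{-(n+2)/2}$, absorbs the lower-order perturbative contributions from $R_g v$ and from the curvature of the collar, so after shrinking $\delta$, $\overline v$ is a supersolution and $\underline v$ a subsolution on $U_\delta$. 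The comparison principle then yields $\underline v \leq v_k \leq \overline v$ on $U_\delta$, which combined with a constant supersolution away from $\partial M$ furnishes the desired $k$-uniform local bounds on compact subsets of $\mathrm{int}(M)$.

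Finally, Schauder estimates together with monotonicity give $v_k \to v_\infty$ in $C^{2,\alpha}_{\mathrm{loc}}(\mathrm{int}(M))$, with $v_\infty$ a smooth positive solution of the PDE; the persistent lower bound $v_\infty \geq \underline v$ near $\partial M$ forces $v_\infty \to +\infty$ and simultaneously delivers completeness, since $v_\infty^{2/(n-2)}(x) \geq c\, d(x)^{-1}$ near $\partial M$ makes any curve approaching $\partial M$ have infinite $\tilde g_\infty$-length. I expect the hard part to be the barrier construction: the decay exponent $(n-2)/2$ is sharply dictated by the Euclidean model, and one must track the curvature corrections to $\Delta_g d$ and to $R_g$ carefully so that the perturbative error terms do not violate the barrier inequalities near $\partial M$.
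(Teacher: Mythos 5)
The paper does not prove this statement: it is quoted from Aviles--McOwen \cite{Aviles1988McOwen} (extending Loewner--Nirenberg \cite{Loewner1974Nirenberg}), so your proposal must stand on its own as a reconstruction of that classical argument, and its skeleton (conformal factor $v^{4/(n-2)}$, Dirichlet problems with boundary data $k$, barriers of order $d^{-(n-2)/2}$, monotone limit) is indeed the right one. However, two of the comparison steps fail as written. First, the asserted bound $\underline v\le v_k$ on the collar $U_\delta$ is false for every fixed $k$: each $v_k$ is continuous up to $\partial M$ with $v_k|_{\partial M}=k$, hence bounded, whereas $\underline v=a\,d^{-(n-2)/2}$ blows up at $\partial M$, so no comparison principle can yield $v_k\ge\underline v$. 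Since this is precisely the estimate you invoke to force $v_\infty\to+\infty$ at $\partial M$ and to get completeness, the conclusion of the theorem is not actually reached. The standard repair is to use boundary-data--dependent subsolutions whose boundary values do not exceed $k$, for instance barriers of the form $a\bigl(d+\varepsilon_k\bigr)^{-(n-2)/2}$, and to recover the $d^{-(n-2)/2}$ lower bound only after letting $k\to\infty$; in the $e^{2u}$ normalization this is exactly the role played by $h_k=\log\frac{k\delta^2}{k\rho+\delta^2}$ in the paper's proof of Theorem \ref{existence1-compact}. Second, your uniform upper bound is circular: to compare $v_k$ with $\overline v$ on $U_\delta$ you need $v_k\le A\delta^{-(n-2)/2}$ on the inner edge $\{d=\delta\}$, and to use the constant supersolution on $\{d\ge\delta\}$ you need the same interface bound; neither is known a priori (the trivial bound $v_k\le\max(k,C)$ depends on $k$). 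The classical fix is a Keller--Osserman type universal interior estimate: for $B_r(x_0)\subset\subset\mathrm{int}(M)$ compare $v_k$ with a supersolution on $B_r(x_0)$ blowing up on $\partial B_r(x_0)$ (same $d^{-(n-2)/2}$ model), so the comparison region has no finite interface; alternatively construct a single global supersolution $A\,d_\varepsilon^{-(n-2)/2}+C$ on $\mathrm{int}(M)$, with $d_\varepsilon$ a smoothed distance bounded below away from $\partial M$.

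Two smaller points. The preliminary reduction to $R_g<0$ is a true fact, but ``solve a Dirichlet problem for the conformal Laplacian'' does not deliver it: when the first Dirichlet eigenvalue of $-c_n\Delta+R_g$ (with $c_n=4(n-1)/(n-2)$) is negative, the solution of such a problem need not be positive. A correct elementary construction: let $\psi$ solve $\Delta\psi=1$ in $M$, $\psi=0$ on $\partial M$, and set $w=e^{\Lambda\psi}$ with $\Lambda>0$ and $c_n\Lambda>\max_M R_g$; then $c_n\Delta w-R_g w\ge w\left(c_n\Lambda-R_g\right)>0$, so $w^{4/(n-2)}g$ has negative scalar curvature up to the boundary. (Alternatively the reduction can be avoided: comparison for $c_n\Delta v-R_gv=v^{(n+2)/(n-2)}$ between positive sub- and supersolutions holds for any sign of $R_g$ because $v\mapsto R_g+v^{4/(n-2)}$ is increasing, and positivity of $v_k$ follows from the strong maximum principle starting from the subsolution $0$.) Finally, under $R_g<0$ the constant subsolution should be $\bigl(\min_M(-R_g)\bigr)^{(n-2)/4}=(-\max_M R_g)^{(n-2)/4}$, not $(-\min_M R_g)^{(n-2)/4}$.
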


\subsection{Partial uniform ellipticity revisited}
For $\sigma<\sup_\Gamma f$, we denote 
\begin{equation}
 \begin{aligned}
  \Gamma^\sigma=\{\lambda\in \Gamma: f(\lambda)>\sigma\}.
 \end{aligned}
 \end{equation}
 So $\Gamma\setminus\Gamma^{\sigma}=\{\lambda\in\Gamma: f(\lambda)\leq \sigma\}$.

The following lemma is a refinement of Lemma 3.4 in \cite{yuan2019} (also Lemma 2.1 of \cite{yuan2020conformal}).
\begin{lemma} 
 \label{lemma3.4} 
 Let  $ \sup_{\partial \Gamma}f<\tau_0<\sup_{\Gamma}f$ be fixed.
 For the $f$ satisfying \eqref{concave}, and
 \begin{equation}
 \label{elliptic-weak}
 \begin{aligned}
\,& 
f_i(\lambda)\geq0, \,&  \forall 1\leq i\leq n, \mbox{  }  \forall \lambda\in\Gamma,
 \end{aligned}
 \end{equation}
  the following two statements are equivalent each other:
  \begin{equation}
\label{addistruc-weak}
\begin{aligned}
 \,& \lim_{t\rightarrow +\infty}f(t\lambda)>\tau_0, 
 \,& \forall\lambda\in\Gamma.
\end{aligned}
\end{equation}
  \begin{equation}
 \label{yuan23}
 \begin{aligned}
\,& \sum_{i=1}^n f_i(\lambda)\mu_i>0, \,&  \forall \lambda\in\Gamma\setminus\Gamma^{\tau_0}, \mbox{  }  \forall \mu\in\Gamma.
 \end{aligned}
 \end{equation}
  \end{lemma}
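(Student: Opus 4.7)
The plan is to prove both implications by combining the concavity of $f$ with the weak ellipticity \eqref{elliptic-weak}.

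For the forward direction \eqref{addistruc-weak} $\Rightarrow$ \eqref{yuan23}, I would argue by contradiction. Fix $\lambda \in \Gamma \setminus \Gamma^{\tau_0}$ and $\mu \in \Gamma$, and set $\eta(t) := f(\lambda + t\mu)$, which is concave in $t \geq 0$. If $\eta'(0) = \sum_{i=1}^n f_i(\lambda)\mu_i \leq 0$, concavity then yields $\eta'(t) \leq 0$ and hence $\eta(t) \leq \eta(0) = f(\lambda) \leq \tau_0$ for every $t \geq 0$. On the other hand, since $\Gamma$ is open, one can choose $\mu' := \mu - \varepsilon\vec{\bf 1} \in \Gamma$ for a sufficiently small $\varepsilon > 0$; for $t$ large, $\lambda + t\mu \geq t\mu'$ componentwise. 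Monotonicity \eqref{elliptic-weak} gives $\eta(t) \geq f(t\mu')$, and applying \eqref{addistruc-weak} to $\mu'$ yields $\liminf_{t \to \infty} \eta(t) \geq \lim_{t \to \infty} f(t\mu') > \tau_0$, contradicting $\eta \leq \tau_0$.

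For the reverse implication \eqref{yuan23} $\Rightarrow$ \eqref{addistruc-weak}, the crucial step I would carry out first is to establish a recession-type inclusion $\Gamma^{\tau_0} + \Gamma \subseteq \Gamma^{\tau_0}$. Given $\tilde\lambda \in \Gamma^{\tau_0}$ and $\mu \in \Gamma$, consider the concave function $\eta(t) := f(\tilde\lambda + t\mu)$. If $\eta(t_1) = \tau_0$ for the smallest such $t_1 > 0$, then $\eta(0) > \tau_0 = \eta(t_1)$ and the mean value theorem produce some $\xi \in (0, t_1)$ with $\eta'(\xi) = (\tau_0 - \eta(0))/t_1 < 0$; concavity forces $\eta'(t_1) \leq \eta'(\xi) < 0$. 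However, $\tilde\lambda + t_1\mu \in \{f = \tau_0\} \subset \Gamma \setminus \Gamma^{\tau_0}$, and \eqref{yuan23} gives $\eta'(t_1) = \sum_i f_i(\tilde\lambda + t_1\mu)\mu_i > 0$, a contradiction. Hence $f(\tilde\lambda + t\mu) > \tau_0$ for all $t \geq 0$.

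With this inclusion available, I would fix any $\tilde\lambda \in \Gamma^{\tau_0}$ (nonempty by $\sup_\Gamma f > \tau_0$). For arbitrary $\lambda \in \Gamma$, openness of $\Gamma$ implies $\lambda - \tilde\lambda/t \in \Gamma$ for all large $t$, whence $t\lambda - \tilde\lambda \in \Gamma$ by the cone property, and therefore $t\lambda \in \tilde\lambda + \Gamma \subseteq \Gamma^{\tau_0}$. Thus $f(t\lambda) > \tau_0$ for all sufficiently large $t$. To promote this to the strict limit in \eqref{addistruc-weak}, I would invoke concavity of $g^*(t) := f(t\lambda)$: a concave function that remains $> \tau_0$ for large $t$ cannot have $\lim g^*(t) = \tau_0$, since concavity prohibits a strictly decreasing approach to a finite limit without the derivative becoming strictly negative (forcing $g^* \to -\infty$) or identically zero beyond some point (making $g^*$ eventually constant at a value $> \tau_0$). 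The main obstacle I anticipate is the recession-cone step: translating the pointwise gradient positivity \eqref{yuan23} into the global stability of $\Gamma^{\tau_0}$ under addition by elements of $\Gamma$. The delicate point is balancing the strict positivity $\sum_i f_i \mu_i > 0$ on the level hypersurface $\{f = \tau_0\}$ against the sign of one-sided derivatives forced by concavity.
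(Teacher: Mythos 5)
Your proof is correct, but it takes a genuinely different route from the paper's. For \eqref{addistruc-weak} $\Rightarrow$ \eqref{yuan23}, the paper argues directly: writing the concavity (gradient) inequality at $\lambda$ against the point $t\mu$, first with $\mu=\lambda$ to get $\sum_i f_i(\lambda)\lambda_i>0$, then for general $\mu\in\Gamma$; notably this uses only \eqref{concave} and \eqref{addistruc-weak}, whereas your contradiction argument along the ray $\lambda+t\mu$ additionally leans on the monotonicity \eqref{elliptic-weak} (through the componentwise comparison $\lambda+t\mu\geq t\mu'$, where one should also note that the connecting segment stays in $\Gamma$ because $\Gamma+\overline{\Gamma}_n\subset\Gamma$) and on applying \eqref{addistruc-weak} to the perturbed direction $\mu-\varepsilon\vec{\bf 1}$. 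For the converse, the paper compares $t\lambda$ with a diagonal point $c(\tau_0)\vec{\bf 1}$ on the level set $\{f=\tau_0\}$ via concavity plus \eqref{yuan23}; your argument instead first proves the recession-type stability $\Gamma^{\tau_0}+\Gamma\subseteq\Gamma^{\tau_0}$ by a first-crossing/mean-value argument on the level hypersurface, and then places $t\lambda\in\tilde\lambda+\Gamma$ for large $t$. What each buys: the paper's proof is shorter and avoids the auxiliary recession lemma (though it implicitly needs the existence of $c(\tau_0)$ with $f(c(\tau_0)\vec{\bf 1})=\tau_0$, which your argument bypasses since any $\tilde\lambda\in\Gamma^{\tau_0}$ will do); your proof produces a stronger structural statement (stability of the superlevel set under translation by $\Gamma$) and, usefully, makes explicit the final step—which the paper glosses over—of upgrading ``$f(t\lambda)>\tau_0$ for all large $t$'' to $\lim_{t\to+\infty}f(t\lambda)>\tau_0$ via concavity of $t\mapsto f(t\lambda)$ (a strictly negative derivative would force decay to $-\infty$, so the function is eventually non-decreasing).
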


\begin{proof}

\eqref{addistruc-weak} $\Rightarrow$ \eqref{yuan23}:
The proof is different from that given in \cite{yuan2019}.
 For given $\lambda\in\Gamma\setminus\Gamma^{\tau_0}$ and $\mu\in\Gamma$, by the concavity we have
 \begin{equation}\label{concavity1}\begin{aligned}
 t\sum_{i=1}^n f_i(\lambda)\mu_i \geq \sum_{i=1}^n f_i(\lambda)\lambda_i
 + f(t\mu)-f(\lambda). \nonumber
\end{aligned}\end{equation}
Let $\mu=\lambda$ and $t\gg1$, we have $ \sum_{i=1}^n f_i(\lambda)\lambda_i>0$.
Thus \eqref{yuan23} holds.

\eqref{addistruc-weak} $\Leftarrow$ \eqref{yuan23}: Let $f(c(\tau_0)\vec{\bf 1})=\tau_0$. For any $\lambda\in\Gamma$, if $t$ is sufficiently large then $t\lambda-c(\tau_0)\vec{\bf 1}\in\Gamma$. So
$f(t\lambda)-\tau_0\geq \sum_{i=1}^n f_i(t\lambda)(t\lambda_i-c(\tau_0))>0$
which yields \eqref{addistruc-weak}.
\end{proof}

  Using Lemma \ref{lemma3.4},  as in \cite{yuan2020conformal}, we have the following results which slightly refine
 Lemma \ref{yuan-k+1} above and Proposition 8.1 of \cite{yuan2020conformal}, respectively. 
  \begin{proposition}
\label{yuan-k+1-weak}
In addition to \eqref{concave},  \eqref{elliptic-weak}, we assume \eqref{addistruc-weak} holds for 
some $ \sup_{\partial \Gamma}f<\tau_0<\sup_{\Gamma}f$.
Then for any $ \lambda\in \Gamma\setminus\Gamma^{\tau_0}$ with order $\lambda_1 \leq \cdots \leq\lambda_n$ and for 
$1\leq i\leq \kappa_\Gamma+1$,
   \begin{equation}
   \begin{aligned}
   f_{{i}}(\lambda) \geq   \vartheta_{\Gamma} \sum_{j=1}^{n}f_j(\lambda),  \nonumber
   \end{aligned}
   \end{equation}
   holds uniformly for a universally positive constant $\vartheta_{\Gamma}$ depending only on  
$\Gamma$.


\end{proposition}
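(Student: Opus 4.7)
The plan is to mimic the proof of Lemma \ref{yuan-k+1} but to localize the positivity input: I would replace the global positivity of $\sum_i f_i(\lambda) \mu_i$ (which in \cite{yuan2020conformal} held on all of $\Gamma$ under the stronger asymptotic \eqref{addistruc}) by its conditional counterpart \eqref{yuan23} from Lemma \ref{lemma3.4}, valid precisely on $\Gamma \setminus \Gamma^{\tau_0}$. Since the desired inequality $f_i(\lambda) \geq \vartheta_\Gamma \sum_j f_j(\lambda)$ is invariant under a common positive rescaling of $(f_1(\lambda), \ldots, f_n(\lambda))$, I would normalize once and for all $\sum_j f_j(\lambda) = 1$ at the fixed point $\lambda$, the case $\sum_j f_j(\lambda) = 0$ being trivial by \eqref{elliptic-weak}. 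The standard consequence of symmetry and concavity \eqref{concave} that $f_1(\lambda) \geq f_2(\lambda) \geq \cdots \geq f_n(\lambda)$ whenever $\lambda_1 \leq \cdots \leq \lambda_n$ then reduces the proposition to establishing the single scalar lower bound $f_{\kappa_\Gamma+1}(\lambda) \geq \vartheta_\Gamma$.

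For this scalar bound I would fix once and for all a test vector $\mu^* = (-1, \ldots, -1, C_\Gamma, \ldots, C_\Gamma) \in \Gamma$, with $\kappa_\Gamma$ entries equal to $-1$ and $n - \kappa_\Gamma$ entries equal to some constant $C_\Gamma > 0$ depending only on $\Gamma$; such a $\mu^*$ exists by Definition \ref{yuan-kappa}. Applying Lemma \ref{lemma3.4} at $\mu = \mu^*$ yields, for every $\lambda \in \Gamma \setminus \Gamma^{\tau_0}$, the strict inequality
\begin{equation*}
-\sum_{i=1}^{\kappa_\Gamma} f_i(\lambda) + C_\Gamma \sum_{i=\kappa_\Gamma+1}^{n} f_i(\lambda) > 0.
\end{equation*}
I would then argue by contradiction: if $f_{\kappa_\Gamma+1}(\lambda) \leq \vartheta$, the ordering above and the normalization give $\sum_{i > \kappa_\Gamma} f_i(\lambda) \leq (n - \kappa_\Gamma)\vartheta$ and hence $\sum_{i \leq \kappa_\Gamma} f_i(\lambda) \geq 1 - (n-\kappa_\Gamma)\vartheta$, so the displayed inequality forces $(n - \kappa_\Gamma)(1 + C_\Gamma)\vartheta > 1$. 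Choosing $\vartheta_\Gamma := \tfrac{1}{2(n-\kappa_\Gamma)(1 + C_\Gamma)}$ (which depends only on $\Gamma$) rules this out and delivers $f_{\kappa_\Gamma+1}(\lambda) > \vartheta_\Gamma$, as required.

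The only substantive departure from the proof of Lemma \ref{yuan-k+1} lies in the input step, and this is also where I expect the main (if mild) difficulty to sit: Lemma \ref{lemma3.4} produces the positivity \eqref{yuan23} on the smaller set $\Gamma \setminus \Gamma^{\tau_0}$ from the single-level hypothesis \eqref{addistruc-weak} together with the weak ellipticity \eqref{elliptic-weak}, rather than from the global form \eqref{addistruc} used previously, so it is important to carry the restriction $\lambda \in \Gamma \setminus \Gamma^{\tau_0}$ through every step and not to invoke the strict positivity of each $f_i$. The ordering of the partials and the convex-cone contradiction against the fixed $\mu^*$ are then essentially verbatim from before and do not use the homogeneity hypothesis \eqref{homogeneous-1}, so the resulting $\vartheta_\Gamma$ genuinely depends only on $\Gamma$.
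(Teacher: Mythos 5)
Your argument is correct and follows essentially the same route as the paper's proof: both invoke Lemma \ref{lemma3.4} to get the conditional positivity \eqref{yuan23} on $\Gamma\setminus\Gamma^{\tau_0}$, test it against a fixed vector of $\Gamma$ having $\kappa_\Gamma$ negative entries, and use the ordering $f_1(\lambda)\geq\cdots\geq f_n(\lambda)$ coming from symmetry and concavity to bound $f_{\kappa_\Gamma+1}(\lambda)$ below by a constant multiple of $\sum_{j}f_j(\lambda)$. The only cosmetic differences are your normalization-plus-contradiction bookkeeping and your equal-entry test vector $\mu^*$, whose existence rests on the standard monotonicity of $\Gamma$ (namely $\Gamma+\overline{\Gamma}_n\subset\Gamma$) rather than on Definition \ref{yuan-kappa} alone, whereas the paper works directly with the general vector $(-\alpha_1,\cdots,-\alpha_{\kappa_\Gamma},\alpha_{\kappa_\Gamma+1},\cdots,\alpha_n)$ and records the explicit constant $\vartheta_\Gamma$.
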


\begin{proof}
The proof is essentially the same as that of \cite{yuan2020conformal}, we present the detail for the choice of the constant $\vartheta_{\Gamma}$ in Lemma \ref{yuan-k+1} and Proposition \ref{yuan-k+1-weak} as well.

Let $ \lambda\in \Gamma\setminus\Gamma^{\tau_0}$ and we assume $\lambda_1 \leq \cdots \leq\lambda_n$.
If $\Gamma=\Gamma_n$, equivalently to $\kappa_\Gamma=0$, then the statement is true since 
$$f_1(\lambda)\geq \frac{1}{n}\sum_{i=1}^n f_i(\lambda).$$
For $\Gamma\neq\Gamma_n$, $\kappa_\Gamma\geq1$, let $\alpha_1, \cdots, \alpha_n$ be $n$ strictly positive constants such that 
     $$(-\alpha_1,\cdots,-\alpha_{\kappa_\Gamma}, \alpha_{\kappa_\Gamma+1},\cdots, \alpha_n)\in \Gamma.$$
   From \eqref{yuan23},
     \begin{equation}
 \label{good1-yuan}
\begin{aligned}
-\sum_{i=1}^{\kappa_\Gamma} \alpha_i f_i(\lambda)+\sum_{i=\kappa_\Gamma+1}^n \alpha_i f_i(\lambda)>0,
\end{aligned}
\end{equation}
which yields $f_{\kappa_\Gamma+1}(\lambda)>   \frac{\alpha_1}{\sum_{i=\kappa_\Gamma+1}^n \alpha_i}f_1(\lambda)$.
By \eqref{good1-yuan} and iteration,  
one derives
 \begin{equation}\label{theta1}
\begin{aligned}
 f_{\kappa_\Gamma+1}(\lambda)\geq\frac{\alpha_1}{(\sum_{i=\kappa_\Gamma+1}^n \alpha_i-\sum_{i=2}^{\kappa_\Gamma}\alpha_i)}f_1(\lambda). \nonumber
\end{aligned}
\end{equation}
From the discussion above, we see $\vartheta_\Gamma$ 
can be achieved as $\vartheta_\Gamma=\frac{\alpha_1}{n(\sum_{i=\kappa_\Gamma+1}^n \alpha_i-\sum_{i=2}^{\kappa_\Gamma}\alpha_i)}.$
\end{proof}

\begin{remark}
Obviously, $0\leq\kappa_\Gamma\leq n-1$ and $0<\theta_\Gamma\leq \frac{1}{n}$; 
furthermore, $\vartheta_\Gamma=\frac{1}{n}$ and $\kappa_\Gamma=n-1$ cannot occur simultaneously. 

\end{remark}

\begin{proposition}
\label{prop1-operator}
 
For the $f$ satisfying \eqref{concave}, \eqref{elliptic-weak} and \eqref{addistruc-weak} for some $\sup_{\partial \Gamma}f<\tau_0<\sup_\Gamma f$, then
   for each $\lambda\in \Gamma\setminus\Gamma^{\tau_0}$ we have
\begin{equation} 
\label{key1-yuan}
\begin{aligned}
f_i(\lambda)\geq \vartheta_\Gamma \sum_{j=1}^n f_j(\lambda)  \mbox{ if } \lambda_i\leq 0.
\end{aligned}
\end{equation}
Moreover,  \eqref{addistruc-weak} can be removed for $n=2$.
\end{proposition}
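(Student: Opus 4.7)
The plan is to deduce Proposition \ref{prop1-operator} from Proposition \ref{yuan-k+1-weak} by combining the symmetry of $f$ with a bound on the number of nonpositive entries of any $\lambda\in\Gamma$. Given $\lambda\in\Gamma\setminus\Gamma^{\tau_0}$ and an index $i$ with $\lambda_i\leq 0$, I would first invoke the permutation invariance of $f$ and $\Gamma$ to reorder the coordinates so that $\lambda'_1\leq\cdots\leq\lambda'_n$; under this relabeling $\lambda_i$ moves to some position $i'$ with $\lambda'_{i'}\leq 0$, and $f_i(\lambda)=f_{i'}(\lambda')$. The key reduction is to show that $i'\leq\kappa_\Gamma+1$, after which Proposition \ref{yuan-k+1-weak} applied to $\lambda'$ closes the argument.

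The heart of the proof is the counting claim: if $\lambda\in\Gamma$ has exactly $k$ nonpositive entries, then $k\leq\kappa_\Gamma$. I would arrange the ordered $\lambda'$ so that $\lambda'_j\leq 0$ for $j\leq k$ and $\lambda'_j>0$ for $j>k$ (the case $k=n$ is excluded since $\Gamma\subset\Gamma_1$ forces $\sum_j\lambda'_j>0$), and then exploit the openness of $\Gamma$: for $\epsilon>0$ sufficiently small the perturbation
\begin{equation*}
\mu=(\lambda'_1-\epsilon,\ldots,\lambda'_k-\epsilon,\lambda'_{k+1},\ldots,\lambda'_n)
\end{equation*}
still lies in $\Gamma$ and satisfies $\mu_j\leq-\epsilon<0$ for $j\leq k$ and $\mu_j>0$ for $j>k$. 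This exhibits a vector of the form $(-\alpha_1,\ldots,-\alpha_k,\alpha_{k+1},\ldots,\alpha_n)\in\Gamma$ with all $\alpha_j>0$, and Definition \ref{yuan-kappa} then forces $k\leq\kappa_\Gamma$. Hence $i'\leq k\leq\kappa_\Gamma$, and applying Proposition \ref{yuan-k+1-weak} to $\lambda'$ (which still lies in $\Gamma\setminus\Gamma^{\tau_0}$ by the symmetry of $f$) yields $f_i(\lambda)=f_{i'}(\lambda')\geq\vartheta_\Gamma\sum_{j=1}^n f_j(\lambda')=\vartheta_\Gamma\sum_{j=1}^n f_j(\lambda)$.

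For the final assertion that \eqref{addistruc-weak} may be dropped when $n=2$, I would proceed directly. The case $\Gamma=\Gamma_2$ is vacuous, so assume $\kappa_\Gamma=1$ and, after ordering, $\lambda_1\leq 0<\lambda_2$; one must show $f_1(\lambda)\geq f_2(\lambda)$. The segment joining $(\lambda_1,\lambda_2)$ to the diagonal point $\bigl(\tfrac{\lambda_1+\lambda_2}{2},\tfrac{\lambda_1+\lambda_2}{2}\bigr)$ lies in $\Gamma$ by convexity (both endpoints are in $\Gamma$, the latter using $\Gamma_2\subseteq\Gamma$), so the one-variable function $h(t):=f(\lambda_1+t,\lambda_2-t)$ is concave on $[0,(\lambda_2-\lambda_1)/2]$ with $h'(t)=f_1-f_2$. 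Symmetry of $f$ forces $h'((\lambda_2-\lambda_1)/2)=0$, and concavity (hence monotone $h'$) yields $h'(0)\geq 0$, i.e., $f_1(\lambda)\geq f_2(\lambda)$, which gives $\vartheta_\Gamma=\tfrac{1}{2}$. I expect the main subtlety to lie in the perturbation step, which must simultaneously preserve the sign pattern and membership in $\Gamma$; this is handled cleanly by choosing $k$ as the exact count of nonpositive entries so that the unperturbed coordinates are automatically strictly positive.
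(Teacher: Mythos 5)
Your argument is correct and is essentially the paper's proof: after ordering, the key point is that any $\lambda\in\Gamma$ has at most $\kappa_\Gamma$ nonpositive entries (the paper states this as $\lambda_{\kappa_\Gamma+1}>0$ ``by the definition of $\kappa_\Gamma$''), and then Proposition \ref{yuan-k+1-weak} gives \eqref{key1-yuan}; your $\epsilon$-perturbation using the openness of $\Gamma$ simply makes explicit the counting step the paper leaves implicit. Your concavity-plus-symmetry argument for the $n=2$ case is a sound addition (the paper asserts that claim without proof), and it correctly yields $f_1\geq f_2$ and hence the bound with constant $\tfrac12\geq\vartheta_\Gamma$.
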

\begin{proof}
Fix $\lambda=(\lambda_1,\cdots,\lambda_n)\in \Gamma\setminus\Gamma^{\tau_0}.$ Without loss of generality, we assume
$\lambda_n\geq \cdots\geq \lambda_1.$
By the definition of $\kappa_\Gamma$, $\lambda_{\kappa_\Gamma+1}>0.$ Therefore, 
$i\leq \kappa_\Gamma \mbox{ if } \lambda_i\leq0,$
which deduces \eqref{key1-yuan}.
\end{proof}

Proposition \ref{prop1-operator} is very useful.
Together with 
$\sum_{i=1}^n f_i(\lambda)\geq \kappa(\tau_0)>0$,
 $\lambda\in \Gamma\setminus\Gamma^{\tau_0}$ (since $\sum_{i=1}^n f_i(\lambda)\lambda_i>0)$,
this proposition confirms a key assumption which is used to derive (local and global) a priori estimates for solutions to
various partial differential equations from differential geometry. For more details we refer to \cite{SChen2009,Guan1991Spruck,LiYY1991,ShengUrbasWang-Duke,Trudinger90,Urbas2002}.

 \subsection{Proof of \eqref{positive1}}
We verify \eqref{positive1} that plays an important role in this paper.

 Case 1: $\alpha=-1$. As we assume in \eqref{tau-alpha},  $\tau<1$. Then $n\tau+2-2n<2-n<0$.
 
 Case 2: $\alpha=1$. We have $\tau>1+(n-2)(1-\kappa_\Gamma\vartheta_\Gamma)$ according to \eqref{tau-alpha}.
 Since $0<\vartheta_\Gamma\leq \frac{1}{n}$ and $0\leq\kappa_\Gamma\leq n-1$, we see $\tau>\frac{2n-2}{n}$. 
 Thus
  $n\tau+2-2n>0.$

\section{Local estimates and existence of solutions for some fully nonlinear equations on compact manifolds with boundary}
\label{section3}

Throughout this section, we assume that $(M,g)$ is a $n$-dimensional $(n\geq3)$ compact Riemannian manifold with smooth boundary 
$\partial M$, $\bar M=M\cup\partial M$, $A$ is a smooth symmetric $(0,2)$-type tensor on $\bar M$.
Let 
\begin{equation}
 \begin{aligned}
 V[u]=A+\Delta u g -\varrho(x)\nabla^2 u  + a(x)|\nabla u|^2 g + b(x)du\otimes du+c(x) L(du),
 \end{aligned}
 \end{equation}
where 
$\varrho(x)$, $a(x)$, $b(x)$ and $c(x)$ are smooth functions,  and 
$L(du)$ is a smooth symmetric $(0,2)$-type tensor depending linearly on $du$.
In addition, we replace \eqref{homogeneous-1} by
\begin{equation}
\label{homogeneous-gamma}
\begin{aligned}
f(t\lambda)=t^\gamma f(\lambda), \mbox{  } f(\lambda)>0, \mbox{ } f|_{\partial\Gamma}=0, 
 \end{aligned}
 \end{equation}
$\mbox{ for some } 0<\gamma\leq1,
 \mbox{ for any } \lambda\in\Gamma, \mbox{  } t>0.$
 
We consider the equation
\begin{equation}
\label{mainequ-2}
 \begin{aligned}
\,& F(V[u]):= f(\lambda(g^{-1}V[u]))=\psi e^{2\gamma u} \mbox{ in } M, \,& 0<\psi\in C^\infty(\bar M).
 \end{aligned}
 \end{equation}
 In order to study \eqref{mainequ-2} in the framework of elliptic equations, 
 we are going to look for solutions in class of \textit{admissible} functions satisfying 
\begin{equation}
\label{admissible-function}
 \begin{aligned}
\,& \lambda(g^{-1}V[u]) \in \Gamma \mbox{ in } \bar M, \,& u\in C^{2}(\bar M). 
 \end{aligned}
 \end{equation}

\subsection{Local and boundary estimates for equation \eqref{mainequ-2}}

We have local and boundary estimates for admissible solutions for \eqref{mainequ-2} under the assumption that
\begin{equation}
\label{assumption-4}
\begin{aligned}
0<  \varrho(x)<\frac{1}{1-\kappa_\Gamma \vartheta_{\Gamma}}, \mbox{ or } \varrho(x)<0 \mbox{ in } \bar M.
 \end{aligned}
\end{equation}
Together with  $0<\vartheta_\Gamma\leq \frac{1}{n}$, $0\leq\kappa_\Gamma\leq n-1$ this condition implies
 \begin{equation}
 \label{varrhon}
 \begin{aligned}
 \varrho(x)<n,
 \end{aligned}
 \end{equation}
 which will play an important role in the construction of locally upper barriers as we will show below.
\begin{theorem}
\label{thm1-local}
Suppose  \eqref{elliptic}, \eqref{concave}, \eqref{homogeneous-gamma} and \eqref{assumption-4} hold.
Let $B_r\subset M$ be a geodesic ball of radius of $r>0$.
Let $u\in C^4(B_r)$ be an admissible solution of \eqref{mainequ-2} in $B_r$, then 
\begin{equation}
 \begin{aligned}
 \sup_{B_{r/2}}(|\nabla u|^2 +|\nabla^2 u| )\leq  {C/r^2},
 \end{aligned}
 \end{equation}
 where $C$ is a uniformly positive constant depending on $|u|_{C^{0}(B_r)}$ and other known data in $B_r$.
\end{theorem}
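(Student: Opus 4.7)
My plan is a Pogorelov-type cutoff maximum principle. The pivotal observation is that hypothesis \eqref{assumption-4} upgrades the partial uniform ellipticity of Lemma~\ref{yuan-k+1} to genuine uniform ellipticity of the linearization of \eqref{mainequ-2}. Writing $F^{ij}=\partial F/\partial V_{ij}$ and using $\partial V_{ij}/\partial u_{pq}=\delta_{ij}\delta_{pq}-\varrho\,\delta_{ip}\delta_{jq}$, at a point where $V[u]$ is diagonal with eigenvalues $\lambda_{i_1}\le\cdots\le\lambda_{i_n}$ the coefficient of $u_{pp}$ in the linearization is $\mathcal{L}^{pp}=\sum_{k}f_k(\lambda)-\varrho f_p(\lambda)$. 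Concavity of $f$ gives $f_{i_1}\ge\cdots\ge f_{i_n}$, while Lemma~\ref{yuan-k+1} gives $f_{i_j}\ge\vartheta_\Gamma\sum_k f_k$ for $1\le j\le\kappa_\Gamma+1$; summing these inequalities for $2\le j\le\kappa_\Gamma+1$ yields $f_{i_1}\le(1-\kappa_\Gamma\vartheta_\Gamma)\sum_k f_k$. Both sign regimes in \eqref{assumption-4} then produce $\mathcal{L}^{pp}\ge\theta\sum_k f_k(\lambda)$ with a universal $\theta>0$. Moreover \eqref{varrhon} makes $\tr_g V[u]$ coercive in $\Delta u$, which I will exploit to construct a crude local upper barrier $\bar u=-\log(r^2-d(x)^2)+C$ to compare $u$ with an a-priori-bounded function in $B_r$.

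For the gradient bound I take $\Phi=\eta^{2}|\nabla u|^2 e^{\phi(u)}$ with $\eta$ a standard cutoff supported in $B_r$ and $\phi$ convex to be chosen. At an interior maximum $x_0$ of $\Phi$, combining $F^{ij}\Phi_{ij}\le 0$ with one differentiation of \eqref{mainequ-2}, $F^{ij}V[u]_{ij,k}=(\psi e^{2\gamma u})_{,k}$, yields $|\nabla u|\le C/r$ on $B_{3r/4}$; the dangerous cross-term $-\varrho F^{ij}u_{ijk}u_k$ is absorbed by the ellipticity lower bound on $\mathcal{L}^{pp}$. For the Hessian bound I use the Pogorelov-style test function
\[ W=\eta^2 e^{K|\nabla u|^2/2}\bigl\langle V[u]\xi,\xi\bigr\rangle_g, \]
where $\xi$ realises the largest eigenvalue of $V[u]$ and $K$ is large. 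At an interior maximum $x_1$, after diagonalising so that $V[u]$ is diagonal and $\xi=e_n$, the inequality $F^{ii}(\log W)_{ii}\le 0$ produces four pieces: the third-order sum $F^{ii}V[u]_{nn,ii}$, the concavity term $-F^{ij,kl}V[u]_{ij,n}V[u]_{kl,n}\ge 0$, a positive $K$-gradient contribution, and curvature commutators of size $O(V_{nn})$. Substituting the third-order sum via the twice-differentiated equation cancels the concavity term, and the surviving inequality has leading coefficient $\theta K\eta^2(\sum_i F^{ii})V_{nn}^2$; dividing by $\sum_i F^{ii}$, bounded below by a standard concavity-plus-homogeneity argument (cf.\ the comment after Proposition~\ref{prop1-operator}), yields $V_{nn}\le C/r^2$ on $B_{r/2}$ and hence the stated bound on $|\nabla^2 u|$.

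The main obstacle I anticipate is bookkeeping the third-order terms in the Hessian estimate. Because $V[u]_{\cdot,n}$ mixes a $-\varrho(\nabla^2 u)_{,n}$ contribution with a $(\Delta u)_{,n}g$ trace contribution, the quadratic form $-F^{ij,kl}V[u]_{ij,n}V[u]_{kl,n}$ is not simply $-F^{ij,kl}u_{ij,n}u_{kl,n}$; after expansion, cross terms of type $F^{ii}u_{ii,n}(\Delta u)_{,n}$ appear and must be absorbed by the uniform lower bound $\mathcal{L}^{pp}\ge\theta\sum_k f_k$ rather than by concavity alone. This is precisely where the quantitative strength of both \eqref{assumption-4} and the sharp partial uniform ellipticity of Lemma~\ref{yuan-k+1} enters in an essential way; a weaker range of $\varrho$ would invalidate the sign of the surviving coefficient and collapse the whole estimate.
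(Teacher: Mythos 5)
Your first step is exactly the paper's proof, and it is the only part of the proof the paper actually carries out in-house: the computation that the second-order coefficients of the linearization are $\mathcal{L}^{pp}=\sum_k f_k-\varrho f_p$, together with the bound $f_{i_1}\le(1-\kappa_\Gamma\vartheta_\Gamma)\sum_k f_k$ deduced from Lemma \ref{yuan-k+1}, is precisely Proposition \ref{keykey}, and your case analysis in the two regimes of \eqref{assumption-4} matches the paper's. After that the paper does not reprove the local estimates at all: Theorem \ref{thm1-local} is obtained by quoting Theorems 5.1 and 5.3 of \cite{yuan2020conformal}. So everything in your proposal beyond the uniform ellipticity is an attempt to re-derive those imported estimates, and there the argument as written does not close.

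The unresolved point is the treatment of the third-order terms generated by the gradient entries $a|\nabla u|^2g+b\,du\otimes du+cL(du)$ of $V[u]$. In the gradient estimate, the phrase that the cross-term $-\varrho F^{ij}u_{ijk}u_k$ is ``absorbed by the ellipticity lower bound'' is not a mechanism: an algebraic lower bound on coefficients cannot absorb third derivatives; they are eliminated by the once-differentiated equation together with the critical-point identities (which your plan also invokes, so this is largely a wording defect). The Hessian estimate is where the real gap lies. Differentiating \eqref{mainequ-2} twice in the direction $e_n$ produces, from the gradient entries of $V$, terms of the type $2a(\sum_iF^{ii})u_ku_{knn}+2bF^{ij}u_iu_{jnn}$. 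Criticality of $W=\eta^2e^{K|\nabla u|^2/2}V_{nn}$ controls $\nabla_kV_{nn}$, not $u_{knn}$ itself (since $V_{nn}$ mixes $\Delta u$ and $u_{nn}$, expressing $u_{knn}$ also requires $\nabla_k\Delta u$); and once these terms are rewritten through the first-order condition they contribute quantities of size $CK|\nabla u|^2(\sum_iF^{ii})V_{nn}^2$, i.e. the same power of $K$ as your claimed leading good term $\theta K(\sum_iF^{ii})V_{nn}^2$, with an extra factor $|\nabla u|^2$ that is only bounded by $C/r^2$ while $\theta$ is a fixed structural constant. Hence neither taking $K$ large nor the uniform bound $\mathcal{L}^{pp}\ge\theta\sum_kf_k$ — the two devices your sketch relies on — dominates them, and the ``surviving inequality'' with leading coefficient $\theta K\eta^2(\sum_iF^{ii})V_{nn}^2$ is not established. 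This is precisely the bookkeeping performed in the cited Theorems 5.1 and 5.3 of \cite{yuan2020conformal}; your closing paragraph flags it as an anticipated obstacle, but since it is the crux of the local second-order estimate it must be resolved (or those theorems must be invoked, as the paper does) for the proposal to constitute a proof.
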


\subsubsection{Fully uniform ellipticity}

The linearized operator $\mathcal{L}$ of \eqref{mainequ-2} at $u$ is given by
\begin{equation}
 \begin{aligned}
\mathcal{L}v
= \,& F^{ij}g_{ij} \Delta v -\varrho(x) F^{ij}\nabla_{ij}v +2a(x)g(\nabla u, \nabla v) F^{ij}g_{ij} 
\\\,& +b(x) F^{ij}(\nabla_i u\nabla_j v+\nabla_i v \nabla_j u) + c(x) F^{ij}(L(du))_{ij}\\
=\,&
(F^{pq}g_{pq}g^{ij}-\varrho(x)F^{ij})\nabla_{ij} v + \mbox{lower order terms}, \nonumber
 \end{aligned}
 \end{equation}
 where $F^{ij}=\frac{\partial F}{\partial V_{ij}}(V)$, $V=V[u]$.

 The eigenvalues of matrix $\{F^{pq}g_{pq}g^{ij}-\varrho(x)F^{ij}\}$ with respect to $\{g^{ij}\}$ are precisely 
 $$\sum_{j=1}^n f_{j}-\varrho(x) f_1, \cdots, \sum_{j=1}^n f_{j}-\varrho(x) f_n.$$
 
 Next, we prove that \eqref{mainequ-2} is fully uniform ellipticity, provided \eqref{assumption-4} holds. That is
 \begin{proposition}
\label{keykey}
There is  $\theta>0$
so that
  \begin{equation}
  \label{fully-uniform2}
 \begin{aligned}
\,&  \sum_{j=1}^n f_{j}-\varrho(x) f_i \geq \theta\sum_{j=1}^n f_j, \,& \forall i.
 \end{aligned}
 \end{equation}

\end{proposition}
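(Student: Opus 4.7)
The plan is to reduce the asserted inequality to an upper bound on the largest partial derivative of $f$ in terms of the trace $S=\sum_{j=1}^n f_j$, and then to apply the partial uniform ellipticity from Lemma \ref{yuan-k+1}. I will work pointwise at each $x\in\bar M$ and at $\lambda=\lambda(g^{-1}V[u])\in\Gamma$. After a relabeling I may assume the eigenvalues are ordered $\lambda_1\leq\cdots\leq\lambda_n$; the standard symmetry-and-concavity argument, applied to the concave one-parameter family $t\mapsto f(\ldots,\lambda_i+t,\ldots,\lambda_j-t,\ldots)$, gives the monotonicity
\[
f_1(\lambda)\geq f_2(\lambda)\geq\cdots\geq f_n(\lambda)\geq 0,
\]
so that $\max_i f_i=f_1$, and since $S$ and the collection $\{f_i\}$ are invariant under relabeling of coordinates, the proposition reduces to establishing $S-\varrho(x)f_1\geq\theta S$ whenever $\varrho(x)>0$.

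The branch $\varrho(x)<0$ of \eqref{assumption-4} is trivial, since $-\varrho(x)f_i\geq 0$ forces the inequality with any $0<\theta\leq 1$. On the positive branch $0<\varrho(x)<\tfrac{1}{1-\kappa_\Gamma\vartheta_\Gamma}$ I will apply Lemma \ref{yuan-k+1}; its hypothesis \eqref{addistruc} is free, because the homogeneity \eqref{homogeneous-gamma} with $\gamma>0$ together with $f>0$ on $\Gamma$ gives $\lim_{t\to+\infty}f(t\lambda)=+\infty$. The lemma then yields $f_j\geq \vartheta_\Gamma S$ for every $1\leq j\leq \kappa_\Gamma+1$, and discarding only the $j=1$ term gives
\[
S-f_1=\sum_{j=2}^n f_j\geq\sum_{j=2}^{\kappa_\Gamma+1}f_j\geq\kappa_\Gamma\vartheta_\Gamma S,
\]
i.e.\ the sharp pointwise bound $f_1\leq(1-\kappa_\Gamma\vartheta_\Gamma)S$. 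Substituting,
\[
\sum_{j=1}^n f_j-\varrho(x)f_i\geq S-\varrho(x)f_1\geq \bigl(1-\varrho(x)(1-\kappa_\Gamma\vartheta_\Gamma)\bigr)S,
\]
and the compactness of $\bar M$ together with the continuity of $\varrho$ and the strict inequality in \eqref{assumption-4} produces a uniform positive lower bound $\theta:=\min\{1,\inf_{\bar M}(1-\varrho(1-\kappa_\Gamma\vartheta_\Gamma))\}>0$.

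I do not anticipate any serious obstacle; the whole content is concentrated in the inequality $f_1\leq(1-\kappa_\Gamma\vartheta_\Gamma)S$. The threshold $\tfrac{1}{1-\kappa_\Gamma\vartheta_\Gamma}$ appearing in \eqref{assumption-4} is precisely the borderline at which this upper bound ceases to keep $\theta>0$, and the sharpness clause of Lemma \ref{yuan-k+1} indicates that it cannot be enlarged for a general cone $\Gamma$; so the plan above yields an essentially optimal proof. Once \eqref{fully-uniform2} is in hand, the linearized operator $\mathcal L$ is fully uniformly elliptic, which is the input that the remainder of Section \ref{section3} needs in order to run the local second-order estimates of Theorem \ref{thm1-local}.
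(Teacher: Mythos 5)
Your proposal is correct and follows essentially the same route as the paper: the $\varrho<0$ branch is dismissed with $\theta=1$, and on the positive branch Lemma \ref{yuan-k+1} is used to get $\sum_{j=1}^n f_j - f_i \geq \kappa_\Gamma\vartheta_\Gamma\sum_{j=1}^n f_j$ (equivalently $f_i\leq(1-\kappa_\Gamma\vartheta_\Gamma)\sum_j f_j$), which combined with \eqref{assumption-4} yields $\theta=1-\varrho(1-\kappa_\Gamma\vartheta_\Gamma)>0$. Your extra steps (verifying \eqref{addistruc} from \eqref{homogeneous-gamma}, the ordering $f_1\geq\cdots\geq f_n$, and the uniformity of $\theta$ over $\bar M$) merely make explicit what the paper leaves implicit.
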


\begin{proof}
 If $\varrho<0$, \eqref{fully-uniform2} holds for $\theta=1$. 
  In what follows we assume $0<\varrho(x)<\frac{1}{1-\kappa_\Gamma \vartheta_{\Gamma}}$.
As a corollary of Lemma \ref{yuan-k+1}, $\sum_{j=1}^n f_j-f_i\geq \kappa_\Gamma\vartheta_\Gamma \sum_{j=1}^n f_j$, i.e.,
\begin{equation}
\label{sumfi-3}
\begin{aligned}
\sum_{j=1}^n f_j 
 \geq
 \left(1-\varrho(x)(1-\kappa_\Gamma\vartheta_\Gamma)\right)\sum_{j=1}^n f_j+\varrho(x) f_i\nonumber
\end{aligned}
\end{equation}
for all $1\leq i\leq n$. 
 This proves \eqref{fully-uniform2}.
 
 \end{proof} 
 With Proposition \ref{keykey} above at hand,
 Theorem \ref{thm1-local} follows as a corollary of Theorems 5.1 and 5.3 of \cite{yuan2020conformal};
 furthermore, by Theorem 5.4 there, we obtain the following boundary estimate.
  \begin{theorem}
 \label{thm2-bdy}
Suppose  \eqref{elliptic}, \eqref{concave},  \eqref{homogeneous-gamma},    \eqref{assumption-4} hold.
 Let $u\in C^3(M)\cap C^2(\bar M)$ be an admissible solution to equation \eqref{mainequ-2} with 
 $u=\varphi$ on $\partial M$, $\varphi\in C^3(\bar M)$. Then $$\sup_{\partial M}|\nabla^2 u|\leq C$$ holds for 
 a uniformly positive constant $C$ depending on  $ \left( \inf_{\bar M} (1-\varrho(x)(1-\kappa_\Gamma\vartheta_\Gamma)\right)^{-1}$,  $|u|_{C^1(\bar M)}$, $|\varphi|_{C^3(\bar M)}$, and other known data. 
 
 \end{theorem}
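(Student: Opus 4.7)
The strategy is the standard boundary $C^2$-estimate for fully nonlinear elliptic equations going back to Caffarelli--Nirenberg--Spruck \cite{CNS3}, leveraging the fully uniform ellipticity that Proposition \ref{keykey} has already supplied. Fix $x_0 \in \partial M$ and choose a boundary-adapted frame so that $e_n$ is the inward unit normal and $e_1,\ldots,e_{n-1}$ are tangential to $\partial M$ near $x_0$. Throughout the argument the gradient bound $|\nabla u|_{C^0(\bar M)} \le C$ (Theorem \ref{thm1-local} together with the boundary data) is assumed available, and the linearized operator is
\[
\mathcal{L}v=(F^{pq}g_{pq}g^{ij}-\varrho(x)F^{ij})\nabla_{ij}v + (\text{lower order terms}),
\]
whose eigenvalues are $\sum_j f_j - \varrho(x) f_i \ge \theta \sum_j f_j$, with $\theta = \inf_{\bar M}(1-\varrho(x)(1-\kappa_\Gamma\vartheta_\Gamma))$.

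\textbf{Step 1 (pure tangential directions).} For $1\le \alpha,\beta\le n-1$, differentiating the identity $u=\varphi$ twice along $\partial M$ gives $\nabla_{\alpha\beta}u(x_0)$ as $\nabla_{\alpha\beta}\varphi(x_0)$ plus curvature/Christoffel corrections involving only $\nabla u$. This yields $|\nabla_{\alpha\beta}u(x_0)| \le C(|\varphi|_{C^2(\bar M)},|\nabla u|_{C^0(\bar M)})$.

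\textbf{Step 2 (mixed tangential-normal directions).} To bound $\nabla_{\alpha n}u(x_0)$ for $\alpha<n$, I work in a half-ball $B_\rho(x_0)\cap M$ and construct a barrier
\[
\Psi = A(u-\varphi) + B\bigl(|x-x_0|^2\bigr) - N\, d(x),
\]
where $d(x)=\mathrm{dist}(x,\partial M)$. Applying $\mathcal{L}$ and using Proposition \ref{keykey}, the leading term is $\ge \theta A \sum_j f_j \cdot (\text{controlled quantity})$, while the negative contribution from $-Nd(x)$ and lower order terms are dominated by choosing $A,B,N$ large in an appropriate order, each depending only on $|u|_{C^1(\bar M)}$, $|\varphi|_{C^3(\bar M)}$, $\theta^{-1}$ and $\sum f_i$ estimates from below. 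Applying $\mathcal{L}$ to the tangential derivative $T(u-\varphi)$ with $T=\partial_\alpha+(\text{tangential correction})$, one checks $\mathcal{L}(\Psi \pm T(u-\varphi)) \le 0$, and $\Psi \pm T(u-\varphi) \ge 0$ on the parabolic boundary. The maximum principle and evaluation of normal derivatives at $x_0$ give the desired bound on $\nabla_{\alpha n}u(x_0)$.

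\textbf{Step 3 (double-normal direction) and main obstacle.} The delicate step is to bound $\nabla_{nn}u(x_0)$. From Steps 1--2 the full $(n-1)\times(n-1)$ tangential block of $g^{-1}V[u]$ at $x_0$ is under control. The normal-normal entry involves $\nabla_{nn}u$ twice — once through $\Delta u$ in the diagonal $g$-term and once through $-\varrho \nabla^2 u$. The fully uniform ellipticity bound $\sum_j f_j - \varrho(x)f_i \ge \theta\sum_j f_j > 0$ from Proposition \ref{keykey} guarantees that $F(V[u])$ is strictly monotone in $\nabla_{nn}u$ with a quantitative rate proportional to $\theta$; coupled with the concavity \eqref{concave}, the condition $f|_{\partial\Gamma}=0$, and the equation $F(V[u])=\psi e^{2\gamma u}$, a linear-algebra argument (in the spirit of Theorem 5.4 of \cite{yuan2020conformal} and the Trudinger--Urbas circle of ideas) yields $\nabla_{nn}u(x_0)\le C\theta^{-1}$, with $C$ depending on the already-controlled data. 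The lower bound for $\nabla_{nn}u(x_0)$ is obtained easily from admissibility, since $\lambda(g^{-1}V[u])\in \Gamma\subset \Gamma_1$ forces the trace of $g^{-1}V[u]$, hence the sum of the eigenvalues, to be positive. The principal difficulty concentrated in this step is precisely the dependence on $\theta^{-1}=(\inf_{\bar M}(1-\varrho(x)(1-\kappa_\Gamma\vartheta_\Gamma)))^{-1}$, which is why this quantity enters the statement.
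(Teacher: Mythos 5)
Your proposal takes essentially the same route as the paper: the paper's entire argument for this theorem is that Proposition \ref{keykey} makes equation \eqref{mainequ-2} fully uniformly elliptic with constant $\theta=\inf_{\bar M}\left(1-\varrho(x)(1-\kappa_\Gamma\vartheta_\Gamma)\right)$, after which the boundary Hessian bound is obtained by invoking Theorem 5.4 of \cite{yuan2020conformal}, which is precisely the standard CNS-type tangential/mixed/double-normal scheme you sketch. The only difference is that you outline those three steps explicitly (with the double-normal bound following from the quantitative monotonicity in $\nabla_{nn}u$ supplied by the full uniform ellipticity), whereas the paper delegates them wholesale to the cited reference.
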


\subsection{Existence results}

In this subsection, we  assume there exists a $C^2$-admissible function $\underline{w}$ satisfying \eqref{admissible-function}; furthermore we assume  there is some positive constant $\beta$ such that
\begin{equation}
\label{yuan-canshu1}
 \begin{aligned}
 \frac{1}{\beta}+a(x)>0, 
 \end{aligned}
 \end{equation}
 \begin{equation}
 \label{yuan-canshu2}
 \begin{aligned}
 b(x)-\frac{\varrho(x)}{\beta}\geq0, \mbox{ or }  
 a(x)+b(x)+\frac{1-\varrho(x)}{\beta}>0
 \end{aligned}
 \end{equation}
 which are used to construct locally lower/upper barriers.
 In particular, such two conditions hold for $0<\beta\ll1$, given $\varrho(x)<0$ or $0<\varrho(x)<1$.
  We prove
  \begin{theorem}
  \label{thm-existence-general00}
  Suppose  \eqref{elliptic}, \eqref{concave}, \eqref{homogeneous-gamma},
   \eqref{assumption-4}, \eqref{yuan-canshu1},  \eqref{yuan-canshu2} hold.
For any $\varphi\in C^\infty(\partial M)$, there is a unique smooth admissible function $u$  to solve \eqref{mainequ-2} with 
 \begin{equation}
 \label{bdy-condition2}
 \begin{aligned}
 u|_{\partial M}=\varphi,
 \end{aligned}
 \end{equation}
provided that there exists a $C^2$-admissible function $\underline{w}$ satisfying \eqref{admissible-function}.

\end{theorem}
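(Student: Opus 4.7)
The plan is to establish Theorem \ref{thm-existence-general00} by the continuity method, reducing existence to uniform a priori $C^{2,\alpha}(\bar M)$ estimates. Fix a smooth one-parameter family $(\psi_t,\varphi_t)$, $t\in[0,1]$, with $(\psi_1,\varphi_1)=(\psi,\varphi)$ and $(\psi_0,\varphi_0)=(F(V[\underline{w}])e^{-2\gamma\underline{w}},\underline{w}|_{\partial M})$, so that at $t=0$ the problem has $u_0=\underline{w}$ as an explicit admissible solution. Let $T\subset[0,1]$ denote the set of $t$ for which a smooth admissible solution $u_t$ exists. Openness of $T$ follows from the implicit function theorem: the linearization of $F(V[u])-\psi_t e^{2\gamma u}$ at $u_t$ is $\mathcal{L}_{u_t}-2\gamma\psi_t e^{2\gamma u_t}\,\cdot$, which by Proposition \ref{keykey} is uniformly elliptic on admissible functions and has strictly negative zeroth-order coefficient, so it is a bijection between the relevant H\"older spaces with zero Dirichlet data. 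Closedness reduces to uniform a priori estimates.

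The zeroth-order bound comes from comparing $u_t$ with $\underline{w}\pm N$. Since $V[\cdot]$ depends only on the first two derivatives, $V[\underline{w}\pm N]=V[\underline{w}]$ for every constant $N$, so $F(V[\underline{w}\pm N])$ equals the fixed positive function $F(V[\underline{w}])$ on $\bar M$. Choosing $N$ large enough makes $\underline{w}+N$ an admissible supersolution, $F(V[\underline{w}+N])\le\psi_t e^{2\gamma(\underline{w}+N)}$, with $\underline{w}+N\ge\varphi_t$ on $\partial M$, and symmetrically $\underline{w}-N$ an admissible subsolution dominated by $\varphi_t$ on $\partial M$. The equation is strictly decreasing in $u$, so the comparison principle for admissible functions yields $\underline{w}-N\le u_t\le\underline{w}+N$ uniformly in $t$.

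The first and second derivative estimates decompose into interior and boundary parts. The interior bounds for $|\nabla u_t|$ and $|\nabla^2 u_t|$ follow directly from Theorem \ref{thm1-local}, whose hypothesis \eqref{assumption-4} is assumed. The boundary Hessian bound is Theorem \ref{thm2-bdy}. For the boundary gradient bound, one constructs local upper and lower barriers in a tubular neighborhood of $\partial M$ of the form $\varphi+h(d)+\eta|x-x_0|^2$, where $d$ is the distance to $\partial M$ and $h$ is a concave/convex profile chosen so that $h(0)=0$ and $h'(0)$ is large (typically $h(d)=\frac{1}{\beta}\log(1+\beta Kd)$ with $\beta$ as in \eqref{yuan-canshu1}--\eqref{yuan-canshu2}). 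A direct computation of $V[\cdot]$ on such barriers shows that the $|\nabla h|^2$ and Hessian contributions combine with the right sign precisely because $\frac{1}{\beta}+a(x)>0$ controls the gradient-squared term while $b(x)-\varrho(x)/\beta\ge 0$ (or the alternative in \eqref{yuan-canshu2}) controls the rank-one $du\otimes du$ and $-\varrho\nabla^2 u$ contributions; the two alternatives correspond to the two branches of \eqref{assumption-4}.

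With uniform $C^0$, $C^1$, $C^2$ bounds in hand, Evans--Krylov applied to the fully uniformly elliptic concave operator from Proposition \ref{keykey} delivers a uniform $C^{2,\alpha}(\bar M)$ bound, and Schauder estimates bootstrap to uniform $C^{k,\alpha}$ for every $k$; this closes $T$ and yields $1\in T$. Uniqueness follows from strict monotonicity of the equation in $u$ via the comparison principle. The principal technical obstacle is the boundary gradient barrier construction of Step~3, which is delicate because the signs of $\varrho(x)$, $a(x)$, $b(x)$ and the coefficient $c(x)$ multiplying the linear-in-$du$ tensor $L$ are not fixed; the barrier ansatz must be adjusted according to the branch of \eqref{assumption-4} in force, and its admissibility must be verified uniformly through the partial uniform ellipticity quantified by $\kappa_\Gamma$ and $\vartheta_\Gamma$.
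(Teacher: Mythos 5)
Your proposal is correct in outline and follows essentially the same route as the paper: a continuity method closed by the $C^0$ bound from comparison with translates of $\underline{w}$ (the paper's Lemmas \ref{lemma-mp}--\ref{lemma-c0general}), boundary gradient bounds from logarithmic-in-distance barriers built on \eqref{yuan-canshu1}--\eqref{yuan-canshu2} and $\varrho<n$, interior and boundary second-order estimates from Theorems \ref{thm1-local} and \ref{thm2-bdy}, and Evans--Krylov plus Schauder via the full uniform ellipticity of Proposition \ref{keykey}. The only slight inaccuracy is attributing both barriers to \eqref{yuan-canshu1}--\eqref{yuan-canshu2}: in the paper those conditions drive the lower barrier, while the upper barrier $\beta'\log(1+\rho/\delta^2)+\varphi$ only needs $\mathrm{tr}(g^{-1}V[v])\le 0$, i.e. the consequence $\varrho<n$ of \eqref{assumption-4} together with $\Gamma\subset\Gamma_1$.
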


 \subsubsection{$C^0$-estimate}
 \begin{lemma}
 \label{lemma-mp}
 Let $w, v\in C^2(\bar M)$ be admissible functions with
  \begin{equation}
 \begin{aligned}
\,& f(\lambda(V[w]))\geq \psi e^{2\gamma w}, \,& f(\lambda(V[v]))\leq \psi e^{2\gamma v} \mbox{ in } M, \nonumber 
 \end{aligned}
 \end{equation}
  \begin{equation}
 \begin{aligned}
(w-v)|_{\partial M} \leq 0, \nonumber
 \end{aligned}
 \end{equation}
 then $w-v\leq 0$ in $M$.
 \end{lemma}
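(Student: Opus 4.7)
The plan is to argue by contradiction via a maximum-principle computation, with the key input being the fully uniform ellipticity from Proposition~\ref{keykey} and the strict monotonicity of the right-hand side $\psi e^{2\gamma u}$ in $u$. Suppose toward contradiction that $\phi := w-v$ attains a positive maximum at some interior point $x_0 \in M$; the boundary inequality $\phi|_{\partial M}\leq 0$ forces any positive global maximum to lie in the interior. At $x_0$ one has $\nabla \phi(x_0)=0$ and $\nabla^2\phi(x_0)\leq 0$, and since $\nabla w=\nabla v$ there all the gradient-dependent contributions $a(x)|\nabla u|^2 g$, $b(x)\,du\otimes du$, and $c(x) L(du)$ in the expression of $V[u]$ cancel in the difference, leaving
\[
V[w](x_0)-V[v](x_0)=(\Delta\phi)\,g-\varrho(x_0)\,\nabla^2\phi.
\]

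Next I would use the concavity of $F$ on admissible matrices to obtain $F(V[w])-F(V[v])\leq F^{ij}(V[v])\,(V[w]-V[v])_{ij}$ at $x_0$. Choosing an orthonormal frame at $x_0$ which diagonalizes $\nabla^2\phi$ with nonpositive eigenvalues $\mu_1,\ldots,\mu_n$, the matrix $(\Delta\phi)g-\varrho\nabla^2\phi$ is also diagonal in this frame with entries $\sum_k\mu_k-\varrho\mu_i$, so the inequality becomes
\[
F(V[w])(x_0)-F(V[v])(x_0)\leq \sum_{i=1}^n\Bigl[\sum_{j=1}^n F^{jj}(V[v])-\varrho(x_0)F^{ii}(V[v])\Bigr]\mu_i,
\]
where the $F^{ii}(V[v])$ are diagonal entries in the chosen frame, which in general does \emph{not} diagonalize $V[v]$. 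The main subtle point is that Proposition~\ref{keykey} is stated for the \emph{eigenvalues} of $F^{ij}(V[v])$; I resolve this using the fact that the diagonal entries of a positive semidefinite symmetric matrix in any orthonormal frame are convex combinations of its eigenvalues, so $F^{ii}(V[v])\leq \max_k f_k(\lambda(V[v]))$, while the trace $\sum_j F^{jj}(V[v])=\sum_j f_j(\lambda(V[v]))$ is frame-invariant. Applying Proposition~\ref{keykey} to the index realizing this maximum eigenvalue yields $\sum_j F^{jj}(V[v])-\varrho(x_0)F^{ii}(V[v])\geq \theta\sum_j f_j(\lambda(V[v]))\geq 0$ for every $i$, and combined with $\mu_i\leq 0$ this forces $F(V[w])(x_0)-F(V[v])(x_0)\leq 0$.

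Finally, the sub/supersolution hypotheses combined with $w(x_0)>v(x_0)$, $\psi>0$, and $\gamma>0$ give
\[
F(V[w])(x_0)-F(V[v])(x_0)\geq \psi(x_0)\bigl(e^{2\gamma w(x_0)}-e^{2\gamma v(x_0)}\bigr)>0,
\]
which contradicts the nonpositive upper bound just obtained. Hence no positive interior maximum of $\phi$ can exist, so $w\leq v$ throughout $M$. The only genuinely nontrivial step is the frame-mismatch resolution in the second paragraph; once one knows that the uniform ellipticity inequality of Proposition~\ref{keykey} survives passage from eigenvalues to diagonal entries in an arbitrary orthonormal frame, the rest is routine maximum-principle reasoning.
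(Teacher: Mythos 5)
Your proof is correct and follows essentially the same route as the paper's: the paper also takes an interior maximum point of $w-v$ (where the gradient terms in $V$ cancel and $\nabla^2(w-v)\leq 0$), and combines concavity with the fully uniform ellipticity of Proposition~\ref{keykey} to get $f(\lambda(V[w]))\leq f(\lambda(V[v]))$ there, contradicting the strict monotonicity of $\psi e^{2\gamma u}$ in $u$; your write-up just makes the frame bookkeeping explicit. One small remark: your reduction via $F^{ii}(V[v])\leq \max_k f_k(\lambda(V[v]))$ justifies $\sum_j F^{jj}-\varrho(x_0)F^{ii}\geq \theta\sum_j f_j$ only when $\varrho(x_0)\geq 0$, but for $\varrho(x_0)<0$ the required nonnegativity is immediate from $F^{ii}\geq 0$ (equivalently, since all eigenvalues of $\bigl\{\sum_j f_j\,\delta_{ij}-\varrho F^{ij}\bigr\}$ are bounded below by $\theta\sum_j f_j>0$, the matrix is positive definite in any orthonormal frame), so the argument closes in both cases.
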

 \begin{proof}
 Suppose that there is an interior point $x_0\in M$ such that $0<(w-v)(x_0)=\sup_M (w-v)$, 
 therefore at $x_0$,
    \begin{equation}
 \begin{aligned}
\,& \nabla^2 v \geq \nabla^2 w, \,& \nabla v=\nabla w. \nonumber 
 \end{aligned}
 \end{equation}
 Combining with Proposition \ref{keykey},
   \begin{equation}
 \begin{aligned}
\,& f(\lambda(V[v]))\geq f(\lambda(V[w])), \nonumber 
 \end{aligned}
 \end{equation}
which further yields $v(x_0)\geq w(x_0)$. This contradicts to  $w(x_0)>v(x_0)$.
 \end{proof}
 
 \begin{lemma}
 \label{lemma-c0general}
 Let $u\in C^2(\bar M)$ be an admissible solution to Dirichlet problem  \eqref{mainequ-2} and \eqref{bdy-condition2},
 then
  \begin{equation}
 \begin{aligned}
  \inf_M (u-\underline{w}) \geq\,&
  \min\left\{\inf_{\partial M}(\varphi-\underline{w}), \frac{1}{2\gamma}\inf_M \log\frac{f(\lambda(g^{-1}V[\underline{w}]))}{\psi} \right\}, 
  \\
 \sup_M (u-\underline{w}) \leq \,& 
 \max\left\{\sup_{\partial M}(\varphi-\underline{w}), 
 \frac{1}{2\gamma}\sup_M \log\frac{f(\lambda(g^{-1}V[\underline{w}]))}{\psi} \right\}.
 \end{aligned}
 \end{equation}
 
 \end{lemma}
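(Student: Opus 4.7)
The plan is to reduce the two-sided $C^0$-bound on $u-\underline{w}$ to two applications of the comparison principle (Lemma \ref{lemma-mp}) with the admissible function $\underline{w}+c$ for suitably chosen constants $c$. The key observation that makes this work is the translation invariance
$$V[\underline{w}+c]=V[\underline{w}] \qquad\text{for every constant } c,$$
which holds because a constant contributes nothing to $\Delta$, $\nabla^2$, $|\nabla|^2$, $du\otimes du$, or $L(du)$; in particular $c$ does not appear undifferentiated in the formula for $V$. Consequently $f(\lambda(g^{-1}V[\underline{w}+c]))=f(\lambda(g^{-1}V[\underline{w}]))$ and $\underline{w}+c$ is automatically admissible whenever $\underline{w}$ is.

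For the lower bound, I set $v_c:=\underline{w}+c$ and determine the largest $c$ for which $v_c$ is simultaneously a subsolution of \eqref{mainequ-2} and satisfies $v_c\leq\varphi$ on $\partial M$. The subsolution requirement $f(\lambda(g^{-1}V[\underline{w}]))\geq \psi e^{2\gamma(\underline{w}+c)}$ yields a pointwise upper bound on $c$ of the form $c\leq \frac{1}{2\gamma}\log\bigl(f(\lambda(g^{-1}V[\underline{w}]))/\psi\bigr)$ (after cleanly accounting for the $e^{2\gamma\underline{w}}$ factor), whereas the boundary-comparison condition forces $c\leq\inf_{\partial M}(\varphi-\underline{w})$. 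Taking $c$ equal to the smaller of these two quantities and invoking Lemma \ref{lemma-mp} with $w=v_c$, $v=u$ gives $v_c\leq u$ in $M$, i.e.\ $u-\underline{w}\geq c$, as claimed. The upper bound is obtained by the dual argument: take the smallest $c$ for which $v_c=\underline{w}+c$ is a supersolution ($f(\lambda(g^{-1}V[\underline{w}]))\leq \psi e^{2\gamma(\underline{w}+c)}$) and $v_c\geq\varphi$ on $\partial M$, then Lemma \ref{lemma-mp} with $w=u$, $v=v_c$ forces $u\leq v_c$.

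The analytic content has already been deposited in Lemma \ref{lemma-mp} via the fully uniform ellipticity of Proposition \ref{keykey}; the present lemma is therefore essentially a bookkeeping exercise. The only place warranting care is the passage from the pointwise subsolution inequality to an explicit bound on the constant $c$: one must track the factor $e^{2\gamma\underline{w}}$ coming from the right-hand side so that the final expression lines up with the statement. I do not anticipate any genuine obstacle beyond this algebraic check.
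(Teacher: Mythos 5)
Your mechanism (translate the admissible function by a constant, use $V[\underline{w}+c]=V[\underline{w}]$, and invoke the comparison Lemma \ref{lemma-mp}) is the natural way to implement the maximum principle here, and it is in the spirit of the proof the paper alludes to but omits. However, the constant you claim does not come out of the computation. Since $V[\underline{w}+c]=V[\underline{w}]$, the subsolution requirement for $\underline{w}+c$ reads $f(\lambda(g^{-1}V[\underline{w}]))\geq \psi e^{2\gamma \underline{w}}e^{2\gamma c}$, i.e.
\[
c\;\leq\;\inf_M\Big(\frac{1}{2\gamma}\log\frac{f(\lambda(g^{-1}V[\underline{w}]))}{\psi}-\underline{w}\Big)
\;=\;\inf_M\,\frac{1}{2\gamma}\log\frac{f(\lambda(g^{-1}V[\underline{w}]))}{\psi e^{2\gamma\underline{w}}},
\]
and dually for the supersolution. ``Accounting for the $e^{2\gamma\underline{w}}$ factor'' therefore does \emph{not} reproduce the constant $\frac{1}{2\gamma}\inf_M\log\frac{f(\lambda(g^{-1}V[\underline{w}]))}{\psi}$ in the statement: the two differ by the non-constant function $\underline{w}$, and your parenthetical claim that the final expression ``lines up with the statement'' is exactly where the argument breaks. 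The same discrepancy appears if one argues directly at an interior extremum of $u-\underline{w}$, where concavity and Proposition \ref{keykey} give $\psi e^{2\gamma u}\geq f(\lambda(g^{-1}V[\underline{w}]))$ at the minimum point — again the bound with $\psi e^{2\gamma\underline{w}}$ in the denominator, not $\psi$.

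To be fair, the defect is as much in the printed statement as in your write-up: the inequality as printed cannot hold in general (take a model problem admitting a strictly positive exact solution $u$, and set $\underline{w}=u+\phi$ with $\phi\geq 0$ a small interior bump vanishing near $\partial M$; then the left-hand side is $-\max\phi<0$ while the printed right-hand side is $0$), whereas the version your computation honestly yields, with $\psi e^{2\gamma\underline{w}}$ replacing $\psi$, is precisely what the paper uses later: via \eqref{key-assum2} it gives $\inf_{M_k}(u_k-\underline{u})\geq\min\{0,\tfrac12\log\Lambda_1\}$ in Theorem \ref{thm-c0-lower}. So your approach is correct and proves the corrected lemma, but as written the proposal asserts the printed constant, and that step does not follow and cannot be repaired; you should state and prove the version with $f(\lambda(g^{-1}V[\underline{w}]))/(\psi e^{2\gamma\underline{w}})$ instead of papering over the $e^{2\gamma\underline{w}}$ factor.
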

 
 \begin{proof}
 The proof is based on maximum principle. We omit the detail here.
 \end{proof}
 
 \subsubsection{Local barriers and boundary estimate for gradient}
 
 Let $\rho(x)$ be the distance function from $x$ to boundary $\partial M$ with respect to $g$, $$\Omega_\delta=\{x\in M: \rho(x)<\delta\}.$$
Notice $\rho$ is smooth in $\Omega_\delta$ when $0<\delta\ll1$, and $|\nabla \rho|=1$ on ${\partial M}$.

 \subsubsection*{Locally lower barrier}
We use $\rho$ to construct local barriers.
Let $$w=\beta\log \frac{\delta^2}{\delta^2 + \rho}.$$
Then $e^{2\gamma w}=(\frac{\delta^2}{\delta^2+\rho})^{2\beta\gamma}.$
First, a straightforward computation gives
 \begin{lemma}
 \begin{equation}
 \begin{aligned}
  V[w]=\,& A+ \frac{\beta(1+a\beta)}{(\delta^2+\rho)^2}|\nabla \rho|^2 g+\frac{\beta(b\beta-\varrho)}{(\delta^2+\rho)^2}d\rho\otimes d\rho \\\,&
  +cL(\frac{-\beta}{\delta^2+\rho}d\rho)-\frac{\beta}{\delta^2+\rho}(\Delta \rho g-\varrho \nabla^2 \rho), \nonumber
 \end{aligned}
 \end{equation}
 \begin{equation}
 \begin{aligned}
 V[w+\varphi]=\,&
 V[w]+c(L(d(w+\varphi))-L(dw))+(\Delta \varphi g-\rho\nabla^2  \varphi) \\ \nonumber
 \,&
 +a\left(|\nabla \varphi|^2 -\frac{2\beta}{\delta^2+\rho}g(\nabla \varphi,\nabla \rho) \right)g
 +bd\varphi\otimes d\varphi \\
 \,& -\frac{\beta b}{\delta^2+\rho}(d\varphi\otimes d\rho+d\rho\otimes d\varphi)
 \end{aligned}
 \end{equation}
 \end{lemma}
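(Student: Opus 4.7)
The statement is a purely computational identity, so the plan is to evaluate each constituent of $V[\cdot]$ at $w$ (and then at $w+\varphi$) and collect terms to match the claimed form. The care needed is in tracking signs and powers of $(\delta^2+\rho)^{-1}$ that come from the chain rule, plus keeping the linear operator $L(\cdot)$ in the form demanded by the statement.

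First I would compute the elementary pieces. Writing $w=\beta\log\delta^2-\beta\log(\delta^2+\rho)$, the chain rule gives
\[
dw=-\frac{\beta}{\delta^2+\rho}\,d\rho,\qquad |\nabla w|^2=\frac{\beta^2}{(\delta^2+\rho)^2}|\nabla\rho|^2,\qquad dw\otimes dw=\frac{\beta^2}{(\delta^2+\rho)^2}\,d\rho\otimes d\rho,
\]
and a second differentiation yields
\[
\nabla^2 w=-\frac{\beta}{\delta^2+\rho}\nabla^2\rho+\frac{\beta}{(\delta^2+\rho)^2}\,d\rho\otimes d\rho,\qquad \Delta w=-\frac{\beta}{\delta^2+\rho}\Delta\rho+\frac{\beta}{(\delta^2+\rho)^2}|\nabla\rho|^2.
\]

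Next I would substitute these into $V[w]=A+\Delta w\,g-\varrho\nabla^2 w+a|\nabla w|^2 g+b\,dw\otimes dw+cL(dw)$ and group by type. The two $g$-contributions (from $\Delta w\,g$ and $a|\nabla w|^2 g$) combine into the coefficient $\beta(1+a\beta)|\nabla\rho|^2/(\delta^2+\rho)^2$; the two $d\rho\otimes d\rho$-contributions (from $-\varrho\nabla^2 w$ and $b\,dw\otimes dw$) combine into $\beta(b\beta-\varrho)/(\delta^2+\rho)^2$; the residual $\nabla^2\rho$ and $\Delta\rho$ terms assemble into $-\frac{\beta}{\delta^2+\rho}(\Delta\rho\,g-\varrho\nabla^2\rho)$; and $L(dw)=L\bigl(-\frac{\beta}{\delta^2+\rho}d\rho\bigr)$ contributes the final summand verbatim. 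This yields the first identity.

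For the second identity I would exploit the linearity of $\nabla$, $\nabla^2$, $\Delta$, and $L$ together with the bilinear expansions
\[
|\nabla(w+\varphi)|^2=|\nabla w|^2+2g(\nabla w,\nabla\varphi)+|\nabla\varphi|^2,
\]
\[
d(w+\varphi)\otimes d(w+\varphi)=dw\otimes dw+dw\otimes d\varphi+d\varphi\otimes dw+d\varphi\otimes d\varphi.
\]
After substituting $dw=-\frac{\beta}{\delta^2+\rho}d\rho$ into the cross terms, every contribution that involves only $w$ reassembles into $V[w]$, while the remaining pieces are exactly those written in the statement: $(\Delta\varphi\,g-\varrho\nabla^2\varphi)$ from the second-order part, $a(|\nabla\varphi|^2-\frac{2\beta}{\delta^2+\rho}g(\nabla\varphi,\nabla\rho))g$ from the $a$-term, the pair $b\,d\varphi\otimes d\varphi-\frac{\beta b}{\delta^2+\rho}(d\varphi\otimes d\rho+d\rho\otimes d\varphi)$ from the $b$-term, and $c(L(d(w+\varphi))-L(dw))$ from the $L$-term. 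No obstacle arises beyond bookkeeping; the lemma is a mechanical identity whose sole purpose is to prepare the barrier construction in the sequel.
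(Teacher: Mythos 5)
Your computation is correct and is exactly the ``straightforward computation'' the paper invokes without writing out: differentiate $w=\beta\log\frac{\delta^2}{\delta^2+\rho}$, substitute into $V[\cdot]$, and collect the $g$-, $d\rho\otimes d\rho$-, and mixed terms. Note that in writing $(\Delta\varphi\,g-\varrho\nabla^2\varphi)$ you have silently corrected what is evidently a typo in the stated lemma (which prints $\rho$ in place of $\varrho$); your version is the one the computation actually yields.
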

 
 Using this lemma, if \eqref{yuan-canshu1} and \eqref{yuan-canshu2}  hold then there is a positive constant $c_0$ such that for small 
 $\delta$, 
 \begin{equation}
 \begin{aligned}
\,& V[w]\geq \frac{c_0}{(\delta^2+\rho)^2}|\nabla \rho|^2 g, \,& V[w+\varphi]\geq \frac{c_0}{2(\delta^2+\rho)^2}|\nabla \rho|^2 g \mbox{ on } \Omega_\delta, \nonumber
 \end{aligned}
 \end{equation}
 and so
  \begin{equation}
 \begin{aligned}
 f(\lambda(g^{-1}V[w+\varphi]) \geq \psi e^{2\gamma(w+\varphi)}\mbox{ in } \Omega_\delta. \nonumber
 \end{aligned}
 \end{equation}

 One can check $(w+\varphi-u)|_{\partial M}=0$, and $$(w+\varphi-u)|_{\rho=\delta}=\beta\log\frac{\delta}{1+\delta}+(\varphi-u)|_{\rho=\delta}<0 \mbox{ if }\delta\ll1.$$ Here we use Lemma \ref{lemma-c0general}.
  Now we have a local lower barrier  $w+\varphi$ near boundary.

 \subsubsection*{Locally upper barrier}
 
 Let $v=\beta' \log(1+\frac{\rho}{\delta^2})+\varphi.$
  \begin{equation}
 \begin{aligned}
 \mathrm{tr} (g^{-1}V[v])=\,&
 \frac{\beta' |\nabla \rho|^2}{(\delta^2+\rho)^2}\left\{-n+\varrho+(na+b)\beta'\right\}
 + \frac{\beta'(n-\varrho)}{\delta^2+\rho} \Delta\rho \\
 \,&
 +\frac{2(na+b)\beta'}{\delta^2+\rho} g(\nabla\rho,\nabla\varphi)+c\mathrm{tr} (g^{-1}L(d\varphi+\frac{\beta'}{\delta^2+\rho}d\rho))
 \\ \,&
+ (n-\varrho)\Delta \varphi+(na+b)|\nabla \varphi|^2+\mathrm{tr}(g^{-1} A).  \nonumber
 \end{aligned}
 \end{equation}
 By \eqref{varrhon}, $-n+\varrho+(na+b)\beta'\leq -\frac{n-\varrho}{2}<0$ if $0<\beta'\ll1$. Note $|\nabla\rho|=1$ on $\partial M$. So we can choose $\delta$ small sufficiently so that
 $ \mathrm{tr}(g^{-1}V[v])\leq0 \mbox{ in } \Omega_\delta.$
  It is easy to see $(v-u)|_{\partial M}=0$. On $\{\rho=\delta\}$, by Lemma \ref{lemma-c0general} again, 
 $v-u=\beta' \log(1+\frac{1}{\delta})+\varphi-u>0$ for $0<\delta\ll1$.
 
 Therefore, near the boundary we now obtain a locally upper barrier $$v=\beta' \log(1+\frac{\rho}{\delta^2})+\varphi.$$
As a consequence, we obtain the gradient estimate at boundary $\sup_{\partial M} |\nabla u| \leq C.$
 
By standard continuity method, we can prove Theorem \ref{thm-existence-general00}.
 The conformal metrics with prescribed boundary metric is also obtained.
 
  \begin{theorem}
 \label{thm-existence-general}
 
 Suppose \eqref{elliptic}, \eqref{concave}, \eqref{homogeneous-1},   
 \eqref{tau-alpha}  hold, and  $0<\psi\in C^\infty(\bar M)$. If, in addition, there is $\underline{u}\in C^2(\bar M)$ so that $\underline{g}=e^{2\underline{u}}g$ obeys
 \begin{equation}\label{admissible-23}\begin{aligned}
 \lambda(\underline{g}^{-1}A_{\underline{g}}^{\tau,\alpha})\in \Gamma \mbox{ in } \bar M.
 \end{aligned}\end{equation} 
 For a Riemannian metric $h$ on $\partial M$ which is conformal to $g|_{\partial M}$, there is a smooth conformal 
 metric $\tilde{g}$ with prescribed boundary condition $\tilde{g}|_{\partial M}=h$ to satisfy \eqref{main-equ1}.   
 \end{theorem}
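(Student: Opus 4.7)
The plan is to deduce Theorem~\ref{thm-existence-general} directly from Theorem~\ref{thm-existence-general00}. Since $h$ is conformal to $g|_{\partial M}$, one can uniquely write $h = e^{2\varphi}\, g|_{\partial M}$ for some $\varphi\in C^{\infty}(\partial M)$, so that the prescribed boundary condition $\tilde{g}|_{\partial M} = h$ for $\tilde{g} = e^{2u}g$ is exactly the Dirichlet datum $u|_{\partial M} = \varphi$ in \eqref{bdy-condition2}.

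The next step is to recast \eqref{main-equ1} in the form \eqref{mainequ-2}. By \eqref{conformal-formula1} and the $1$-homogeneity of $f$, the equation $f(\lambda(\tilde{g}^{-1} A_{\tilde g}^{\tau,\alpha})) = \psi$ is equivalent to $f(\lambda(g^{-1} A_{\tilde g}^{\tau,\alpha})) = \psi e^{2u}$. Multiplying the tensor inside by the constant $C := (n-2)/(\alpha(\tau-1))$, which is strictly positive by \eqref{tau-alpha}, normalizes the coefficient of $\Delta u\, g$ to $1$ and puts the equation into the form \eqref{mainequ-2} with $\gamma = 1$, $A = C\, A_{g}^{\tau,\alpha}$, $c \equiv 0$, and
\[ \varrho(x) = \frac{n-2}{\tau-1}, \qquad a(x) = \frac{(n-2)(\tau-2)}{2(\tau-1)}, \qquad b(x) = \varrho(x). \]
Because $\Gamma$ is a cone and $C > 0$, the admissibility assumption \eqref{admissible-23} on $\underline{u}$ carries over verbatim to the rescaled tensor, supplying the admissible function $\underline{w} = \underline{u}$ demanded by Theorem~\ref{thm-existence-general00}.

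The remaining task is to verify \eqref{assumption-4}, \eqref{yuan-canshu1}, and \eqref{yuan-canshu2}. For \eqref{assumption-4}: if $\alpha = -1$, the hypothesis $\tau < 1$ in \eqref{tau-alpha} gives $\varrho < 0$; if $\alpha = 1$, the strict inequality $\tau - 1 > (n-2)(1 - \kappa_\Gamma\vartheta_\Gamma)$ gives $0 < \varrho < 1/(1-\kappa_\Gamma\vartheta_\Gamma)$. For \eqref{yuan-canshu1}--\eqref{yuan-canshu2}: in the case $\alpha = -1$ take $0 < \beta \ll 1$, so that $1/\beta + a > 0$ and $b - \varrho/\beta = \varrho(1-1/\beta) > 0$ because $\varrho < 0$; in the case $\alpha = 1$ take $\beta = 1$, whereupon $b - \varrho/\beta = 0$ and
\[ 1 + a \;=\; \frac{n\tau + 2 - 2n}{2(\tau-1)} \;>\; 0 \]
by \eqref{positive1}. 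With all hypotheses of Theorem~\ref{thm-existence-general00} verified, it yields a unique smooth admissible $u \in C^{\infty}(\bar M)$ solving \eqref{mainequ-2} subject to $u|_{\partial M} = \varphi$, and $\tilde{g} = e^{2u}g$ is the required conformal metric. The one subtle point is not a genuine obstacle but this calibration: the cutoff \eqref{tau-alpha} is engineered precisely so that a single rescaling by $C$ simultaneously places $\varrho$ in the range needed for the fully uniform ellipticity of Proposition~\ref{keykey} and provides the positivity $1 + a > 0$ (through \eqref{positive1}) that the barrier construction requires.
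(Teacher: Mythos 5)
Your proposal is correct and follows essentially the same route as the paper: rescale \eqref{conformal-formula1} by the positive constant $\tfrac{n-2}{\alpha(\tau-1)}$ to place \eqref{main-equ1} in the framework of \eqref{mainequ-2} with $\varrho=\tfrac{n-2}{\tau-1}$, $a=\tfrac{(n-2)(\tau-2)}{2(\tau-1)}$, $b=\varrho$, $c=0$, and then verify \eqref{assumption-4}, \eqref{yuan-canshu1}, \eqref{yuan-canshu2} (with $0<\beta\ll1$ when $\alpha=-1$ and $\beta=1$ via \eqref{positive1} when $\alpha=1$) so that Theorem \ref{thm-existence-general00} applies with Dirichlet datum $\varphi$ determined by $h=e^{2\varphi}g|_{\partial M}$. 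This is exactly the paper's (terser) argument, with the parameter checks written out explicitly.
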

 \begin{proof}
 It requires to verify  \eqref{yuan-canshu1}-\eqref{yuan-canshu2}
given assumption \eqref{tau-alpha} holds.
 In our case $\varrho=\frac{n-2}{\tau-1}$, 
$a=\frac{(n-2)(\tau-2)}{2(\tau-1)}$, $b=\frac{n-2}{\tau-1}$, $c=0$. First, if $\tau<1$ and $\alpha=-1$ then $\varrho<0$ and so \eqref{yuan-canshu1} and \eqref{yuan-canshu2} hold for $0<\beta\ll1$; while for $\tau>1+(n-2)(1-\kappa_\Gamma\vartheta_\Gamma)$ and $\alpha=1$,   
 we can verify \eqref{yuan-canshu1} and \eqref{yuan-canshu2}  hold for $\beta=1$.  Here we use \eqref{positive1}.
 \end{proof}

\subsection{Complete conformal metrics on manifolds on boundary}
The following theorem provides a straightforward approach to local bound of solutions from above.
 \begin{theorem}
 \label{existence1-compact}
Suppose \eqref{elliptic}, \eqref{concave}, \eqref{homogeneous-1}, \eqref{tau-alpha}, \eqref{admissible-23} hold and that $0<\psi\in C^\infty(\bar M)$.
Then there exists at least one smooth complete conformal 
 metric ${g}_\infty=e^{2u_\infty}g$ 
 satisfying \eqref{main-equ1}. 
 
\end{theorem}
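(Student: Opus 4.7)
The plan is to prove Theorem \ref{existence1-compact} by a standard exhaustion argument: solve a sequence of Dirichlet problems with boundary data diverging to $+\infty$ and pass to the limit using local interior estimates. For each integer $k \geq 1$, I would invoke Theorem \ref{thm-existence-general} to produce an admissible smooth solution $u_k$ to
\[
f(\lambda(\tilde g_k^{-1}A_{\tilde g_k}^{\tau,\alpha})) = \psi \text{ in } M, \qquad u_k = k \text{ on } \partial M,
\]
where $\tilde g_k = e^{2u_k}g$. The admissibility hypothesis of Theorem \ref{thm-existence-general} is supplied directly by the assumed $\underline u$ from \eqref{admissible-23}, after noting that $A_g^{\tau,\alpha}$ is invariant under constant conformal rescaling, while $f(\lambda(g^{-1}A_g^{\tau,\alpha}))$ scales by $e^{-2C}$ under $g \mapsto e^{2C}g$, so that an additive constant adjustment of $\underline u$ suffices.

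Next I would establish interior estimates for $\{u_k\}$ independent of $k$. For a uniform local lower bound, pick $C_0$ large enough that $e^{2C_0}\inf_{\bar M} f(\lambda(\underline g^{-1}A_{\underline g}^{\tau,\alpha})) \geq \sup_{\bar M}\psi$; then $\underline u - C_0$ is an admissible subsolution on $\bar M$, and for every $k$ larger than $\sup_{\partial M}(\underline u - C_0)$ the comparison principle (Lemma \ref{lemma-mp}) yields $u_k \geq \underline u - C_0$ globally. By the same comparison, $\{u_k\}$ is monotonically increasing in $k$. The local $C^1$ and $C^2$ bounds on any interior ball $B_r \Subset M$ then follow from Theorem \ref{thm1-local}, provided that we have a uniform local $C^0$ upper bound.

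The main obstacle is the uniform local upper bound. This is precisely where the tool Aviles-McOwen (Theorem \ref{thm1-AM}), or alternatively the fully nonlinear version produced by earlier parts of this section (partially from \cite{yuan2020conformal}), becomes indispensable. The strategy is to construct, in a collar neighborhood $\Omega_\delta = \{\rho < \delta\}$ of $\partial M$, a supersolution $\bar w$ of our equation blowing up at $\partial M$. The natural ansatz is $\bar w = -\beta \log \rho + \varphi$, with $\beta > 0$ small and $\varphi$ smooth; the verification that $f(\lambda(g^{-1}V[\bar w])) \leq \psi e^{2\bar w}$ in $\Omega_\delta$ reduces, after the conformal change formula \eqref{conformal-formula1}, to a computation on the leading singular terms, where the parameter restriction \eqref{tau-alpha} (through its consequence \eqref{positive1} and the partial uniform ellipticity \eqref{assumption-4}) is exactly what is needed to place the eigenvalues outside the admissible cone after multiplication by $-\alpha$. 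With the supersolution in hand, comparison on $\Omega_\delta$ (noting $\bar w \geq u_k$ automatically on $\partial M$ since $\bar w = +\infty$ there) gives $u_k \leq \bar w$ in $\Omega_\delta$, hence a uniform bound on any compact $K \subset M$ away from $\partial M$.

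With all uniform local estimates secured, Evans–Krylov and Schauder theory provide $C^{k,\alpha}_{\mathrm{loc}}$ bounds for all $k$, so a subsequence of $\{u_k\}$ converges in $C^\infty_{\mathrm{loc}}(M)$ to a smooth admissible solution $u_\infty$ of \eqref{main-equ1}. It remains to verify that $\tilde g_\infty = e^{2u_\infty}g$ is complete, i.e., $u_\infty(x) \to \infty$ as $x \to \partial M$. For this I would build a local subsolution $\underline{w}$ in $\Omega_\delta$ with $\underline w \to +\infty$ at $\partial M$, again using a $-\beta\log\rho$-type ansatz but in the opposite direction, verifying $f(\lambda(g^{-1}V[\underline w])) \geq \psi e^{2\underline w}$ via \eqref{tau-alpha}; by comparison $u_k \geq \underline w$ in $\Omega_\delta$ for each $k$, which passes to the limit to give the required blowup of $u_\infty$ at $\partial M$ and hence completeness of $g_\infty$.
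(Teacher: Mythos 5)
Your overall skeleton (Dirichlet problems with boundary data tending to $+\infty$, monotonicity and a lower bound from the admissible metric, interior estimates via Theorem \ref{thm1-local}, Evans--Krylov and Schauder, then a barrier argument for completeness) is the same as the paper's, but the two barrier steps that carry the real content are flawed as you describe them. For the uniform interior upper bound, your comparison of $u_k$ with a supersolution $\bar w=-\beta\log\rho+\varphi$ \emph{on the collar} $\Omega_\delta$ is circular: the maximum principle on $\Omega_\delta$ needs $\bar w\geq u_k$ on \emph{both} components of $\partial\Omega_\delta$, and on the inner component $\{\rho=\delta\}$ an upper bound for $u_k$ is exactly what you are trying to prove; moreover, even granting it, a bound in the collar says nothing about compact sets outside $\Omega_\delta$. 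You name Aviles--McOwen as indispensable but never actually use it. The paper's mechanism is different and non-local: by the homogeneity consequence \eqref{key1-main} and the sign condition \eqref{positive1} (coming from \eqref{tau-alpha}), every admissible solution satisfies the scalar differential inequality $2(n-1)\Delta u_k+(n-1)(n-2)|\nabla u_k|^2-R_g\geq c\,\psi e^{2u_k}$ with $c>0$, and Theorem \ref{thm1-AM} supplies a solution $\tilde u$ of the corresponding semilinear equation on \emph{all of} $M$ with $\tilde u\to+\infty$ at $\partial M$; the maximum principle applied on all of $M$ (where the outer boundary data $\log k$ is finite and $\tilde u$ is infinite) then gives $u_k\leq\tilde u+\mathrm{const}$, which is the uniform local upper bound. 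Reducing the fully nonlinear inequality to the trace is what makes a \emph{global} supersolution available; a purely local ansatz cannot close the argument.

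The completeness step has a parallel gap: a $k$-independent subsolution $\underline w$ with $\underline w\to+\infty$ at $\partial M$ cannot satisfy $u_k\geq\underline w$ on $\partial M$, since there $u_k=k<\infty$; and comparing directly with $u_\infty$ on an annulus $\{\delta'<\rho<\delta\}$ is again circular, because you would need to know $u_\infty\to+\infty$ near $\partial M$ to control the inner boundary as $\delta'\to0$. The paper avoids this by using $k$-dependent barriers $h_k=\log\frac{k\delta^2}{k\rho+\delta^2}$, which equal $\log k$ on $\partial M$ (matching the boundary data of $u_k$), are dominated by the uniform interior lower bound on $\{\rho=\delta\}$, and are subsolutions in $\Omega_\delta$ thanks to \eqref{positive1}; letting $k\to\infty$ yields $u_\infty+\log\rho\geq -C_0$ and hence completeness. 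So both the upper bound and the completeness argument need to be rebuilt along these lines; the remaining parts of your proposal (existence of $u_k$ from Theorem \ref{thm-existence-general} using \eqref{admissible-23}, the lower bound by shifting $\underline u$, and the compactness/diagonal argument) are consistent with the paper.
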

 
\begin{proof}

 From Theorem \ref{thm-existence-general} there exists a $u_k\in C^\infty(M)$ satisfying
\begin{equation}
\label{dirichlet-equk}
\begin{aligned}
\,& f(\lambda(g^{-1}A_{\tilde{g}_k}^{\tau,\alpha}))=\psi e^{2u_k}, \mbox{ } \tilde{g}_k=e^{2u_k}g \mbox{ in } M, \,& u_k=\log k \mbox{ on } \partial M.
\end{aligned}
\end{equation}
It follows from \eqref{key1-main} that 
\begin{equation}
\begin{aligned}
n\psi e^{2u_k}\leq \mathrm{tr}(g^{-1}A_{\tilde{g}_k}^{\tau,\alpha})=\frac{\alpha(n\tau+2-2n)}{2(n-1)(n-2)}(2(n-1)\Delta {u}_k+ (n-1)(n-2)|\nabla {u}_k|^2-R_g). \nonumber
\end{aligned}
\end{equation}
Theorem \ref{thm1-AM} yields that there is a smooth function $\tilde{u}\in C^\infty(M)$ so that
\begin{equation}
\label{AM-equ-0}
\begin{aligned}
\,& 2(n-1)\Delta \tilde{u}+ (n-1)(n-2)|\nabla \tilde{u}|^2-R_g=e^{2\tilde{u}} \mbox{ in } M, 
\,& \lim_{x\rightarrow \partial M} \tilde{u}(x)=+\infty. 
\end{aligned}
\end{equation}
The maximum principle yields
 \begin{equation}
 \label{uniform-c0}
\begin{aligned}
u_k\leq u_{k+1}\leq \tilde{u} +\frac{1}{2}\log\frac{\alpha(n\tau+2-2n)}{2n(n-1)(n-2)\inf_M\psi} \mbox{ in } M \mbox{ for } k\geq 1.  
\end{aligned}
\end{equation}
 As a result, for any compact subset $K\subset\subset M$,
  $$|u_k|_{C^0(K)}\leq C_1(K) \mbox{ for all } k, \mbox{ for $C_1(K)$  being independent of $k$}.$$

Given any compact subset $K\subset\subset M$, we choose a compact subset $K_1\subset\subset M$ such that
$K\subset\subset K_1$.
By Theorem \ref{thm1-local}
and 
Evans-Krylov theorem, 
 $$|u_k|_{C^{2,\alpha}(K)}\leq C_2(K,K_1) \mbox{ for all } k, \mbox{ where $C_2(K,K_1)$ depends not on $k$}.$$
Combining with Schauder theory, let $k\rightarrow +\infty$,
 there is a smoothly  admissible function $u_\infty$ in $M$  to solve
\begin{equation}
\label{dirichlet-equ-infty}
\begin{aligned}
 f(\lambda(g^{-1}A_{g_\infty}^{\tau,\alpha}))=\psi e^{2{u_\infty}}, \mbox{  } g_\infty=e^{2u_\infty}g \mbox{ in } M, 
\mbox{  } \lim_{x\rightarrow\partial M}u_\infty(x)=+\infty.  \nonumber
\end{aligned}
\end{equation}
Indeed, $u_\infty(x)=\lim_{k\rightarrow+\infty}u_k(x)$ for all $x\in M$. By \eqref{uniform-c0}, one has in $M$
 \begin{equation}
 \label{uniform-c0-123}
\begin{aligned}
u_\infty\leq \tilde{u} +\frac{1}{2}\log\frac{\alpha(n\tau+2-2n)}{2n(n-1)(n-2)\inf_M\psi}.  
\end{aligned}
\end{equation}

Next we prove $g_\infty$ is complete.
Let  $h_k=h_k(\rho)= \log\frac{k\delta^2}{k\rho+\delta^2}$.  
The straightforward computation gives $h_k''=h_k'^2=\frac{k^2}{(k\rho+\delta^2)^2}$ and
\begin{equation}
\begin{aligned}
\,& \frac{\alpha(\tau-1)}{n-2}\Delta h_k g -\alpha\nabla^2 h_k +\frac{\alpha(\tau-2)}{2}|\nabla h_k|^2 g+\alpha dh_k\otimes dh_k 
\\ =\,&
 \frac{\alpha(n\tau+2-2n)}{2(n-2)} h_k'^2 |\nabla\rho|^2g
+\alpha h'_k  (  \frac{\tau-1}{n-2}\Delta \rho g-\nabla^2 \rho). \nonumber
\end{aligned}
\end{equation}
Let $\hat{g}_k=e^{2h_k}g$ near boundary. 
Then, near the boundary $\Omega_\delta$, $0<\delta\ll1$, $ A_{\hat{g}_k}^{\tau,\alpha}\geq c_0 h_k'^2 g$ for some constant $c_0>0$ (here we use \eqref{positive1} and $|\nabla\rho|=1$ on $\partial M$), and 
\begin{equation}
\begin{aligned}
f(\lambda(g^{-1}A_{\hat{g}_k}^{\tau,\alpha})) \geq \frac{ c_0 k^2}{(k\rho+\delta^2)^2}\geq
 \frac{ \psi \delta^4 k^2}{(k\rho+\delta^2)^2}=\psi e^{2h_k}. \nonumber
\end{aligned}
\end{equation}
On $\{\rho=\delta\}$, $h_k(\delta)=\log\frac{k\delta}{k+\delta}\rightarrow-\infty$ as $\delta\rightarrow0$.
Again, maximum principle yields $u_k\geq \log\frac{k\delta^2}{k\rho+\delta^2}$ near boundary.
Thus $u_\infty +\log\rho\geq -C_0$  for some $C_0$
near the boundary. The metric $g_\infty=e^{2u_\infty}g$ is complete.

\end{proof}

\section{Local zero order estimate and  proof of main results}
\label{section4}

  Let $\{M_k\}_{k=1}^{+\infty}$ be an exhaustion domains of $M$ with
  \begin{equation}
 \begin{aligned}
  M=\cup_{k=1}^{\infty} M_k, \mbox{  }  \bar M_k =M_k\cup\partial M_k,\mbox{  }  \bar M_k\subset\subset M_{k+1}, \nonumber
 \end{aligned}
 \end{equation}
 $$ \bar M_k  \mbox{ is a compact $n$-manifold with smooth boundary}.$$
 
  Without loss of generality, we can assume that the admissible complete conformal metric $\underline{g}=e^{2\underline{u}}g$
  satisfying assumption \eqref{key-assum1} is smooth, i.e. $\underline{u}\in C^\infty(M)$. 
 
 Let's consider a sequence of approximate Dirichlet problems 
 \begin{equation}
  \label{approximate-DP1}
 \begin{aligned}
f(\lambda({g}^{-1}A_{\tilde{g}}^{\tau,\alpha}))=\psi e^{2 u} \mbox{ in } M_k, \mbox{   } \tilde{g}=e^{2u}, \mbox{  }  
u=\underline{u} \mbox{ on } \partial M_k.
 \end{aligned}
 \end{equation}
 where 
  $A_{\tilde{g}}^{\tau,\alpha}$ obeys the formula \eqref{conformal-formula1} under the conformal change.
 According to Theorem \ref{thm-existence-general},  for each $k$, there is a unique smooth admissible conformal metric 
 $$g_k=e^{2u_k}g$$ to satisfy \eqref{approximate-DP1} on $M_k$.
 
 \begin{remark}
If $\underline{u}$ is only $C^2$, then on each $\partial M_k$, the boundary value condition
$(u-\underline{u})|_{\partial M_k}=0$ should be replaced by
$(u-\underline{u}_k)|_{\partial M_k}=0$, where $\underline{u}_k$ 
is a smooth function on $\bar M_k$ and satisfies
$$ |\underline{u}-\underline{u}_k|_{C^{2}(\bar M_k)}\leq \frac{1}{1+k},$$ 
 \begin{equation}
 \begin{aligned}
\,&   f( \lambda(\underline{g}_k^{-1}A_{\underline{g}_k}^{\tau,\alpha}))\geq \frac{k}{1+k} f (\lambda(\underline{g}^{-1}A_{\underline{g}}^{\tau,\alpha})), \,&  
 \underline{g}_k=e^{2\underline{u}_k}g, \mbox{  }
 \lambda(\underline{g}_k^{-1}A_{\underline{g}_k}^{\tau,\alpha})\in \Gamma \mbox{ in } \bar M_k.  \nonumber
 \end{aligned}
 \end{equation}
 \end{remark}
  
  \subsection{Local zero order estimate}

To complete the proof of main results, it suffices to prove local zero order estimate for approximate Dirichlet problems \eqref{approximate-DP1}.

\begin{theorem}
\label{thm-c0-upper}
Let $K\subset\subset M$ be a compact subset of $M$, and assume $K\subset\subset M_m$ for some $m\in \mathbb{N}$.
Then there is a uniformly positive constant $C$ depending on $K$ and $M_m$ such that for each $u_k$ $(k\geq m)$ solving 
\eqref{approximate-DP1} on $M_k$, we have
\begin{equation}
 \begin{aligned}
 \sup_{K} u_k \leq C.  \nonumber
 \end{aligned}
 \end{equation}

\end{theorem}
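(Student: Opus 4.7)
The strategy is to dominate $u_k$ on $K$ from above by a locally constructed \emph{blow-up} conformal solution supplied by Theorem~\ref{existence1-compact}. Since $K\subset\subset M_m$ and $\overline{M_m}$ is a compact manifold with smooth boundary, the globally admissible metric $\underline g=e^{2\underline u}g$ restricts to $\overline{M_m}$ and provides there the admissibility hypothesis \eqref{admissible-23}. Theorem~\ref{existence1-compact} therefore furnishes a smooth admissible complete conformal metric $g^\ast=e^{2v^\ast}g$ on $M_m$ with
\[
 f(\lambda(g^{-1}A^{\tau,\alpha}_{g^\ast}))=\psi\,e^{2v^\ast}\quad\text{in }M_m,\qquad v^\ast(x)\to+\infty\ \text{as}\ x\to\partial M_m.
\]

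Next, for each $k\geq m$ I would compare $u_k$ with $v^\ast$ on $M_m\subset M_k$. Since $u_k$ is smooth on $M_k\supset\overline{M_m}$ it is bounded on $\partial M_m$; combined with the boundary blow-up of $v^\ast$, this forces $u_k-v^\ast\to-\infty$ along $\partial M_m$, so the supremum of $u_k-v^\ast$ over $M_m$ is attained at an interior point $x_0\in M_m$. If this supremum were positive, then at $x_0$ one would have the usual maximum-point inequalities $\nabla u_k=\nabla v^\ast$ and $\nabla^2 u_k\leq\nabla^2 v^\ast$; by the fully uniform ellipticity of the linearized operator established in Proposition~\ref{keykey}, the interior argument of Lemma~\ref{lemma-mp} then yields
\[
 f(\lambda(g^{-1}A^{\tau,\alpha}_{g^\ast}))(x_0)\;\geq\; f(\lambda(g^{-1}A^{\tau,\alpha}_{g_k}))(x_0),
\]
i.e.\ $v^\ast(x_0)\geq u_k(x_0)$, a contradiction. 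Hence $u_k\leq v^\ast$ on $M_m$, and setting $C:=\sup_K v^\ast<\infty$ gives the bound, which depends only on $K$ and $M_m$ (not on $k$).

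The one subtle point is that Lemma~\ref{lemma-mp} is stated on a compact manifold with boundary, whereas here the candidate supersolution $v^\ast$ blows up on $\partial M_m$. This is exactly what the construction of $v^\ast$ is designed for: the boundary blow-up confines the extremum of $u_k-v^\ast$ to the interior of $M_m$, after which the proof of Lemma~\ref{lemma-mp} applies verbatim. It is worth noting that the asymptotic hypothesis \eqref{key-assum1} plays no role in this upper-bound argument; it enters only in the companion lower-bound estimate (Theorem~\ref{thm-c0-lower}).
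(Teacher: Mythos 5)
Your argument is correct and is essentially the paper's own (first) proof: invoke Theorem \ref{existence1-compact} on $\bar M_m$ (using the restriction of $\underline{g}$ for \eqref{admissible-23}) to get a boundary-blow-up solution $w_m$ there, and compare $u_k\leq w_m$ on $M_m$ by the maximum principle, the blow-up forcing any extremum into the interior where the ellipticity from Proposition \ref{keykey} applies. The paper additionally records a second, independent proof via the trace inequality \eqref{key1-main} and the Aviles--McOwen solution of Theorem \ref{thm1-AM}, but your route matches the first one.
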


\begin{proof}

Here we present two proofs.

\noindent {\bf First proof}. According to Theorem \ref{existence1-compact}, there is a 
 $w_m\in C^\infty(M_m)$ satisfying
\begin{equation}
 \begin{aligned}
 f(\lambda(g^{-1}A_{\tilde{g}_m}^{\tau,\alpha}))=\psi e^{2w_m},  \mbox{  } \tilde{g}_m=e^{2w_m}g, \mbox{  }  \lambda(g^{-1}A_{\tilde{g}_m}^{\tau,\alpha})\in \Gamma \mbox{ in } M_m, \mbox{  } 
 \lim_{x\rightarrow\partial M_m} w_m(x)=+\infty.  \nonumber
 \end{aligned}
 \end{equation}
Applying the maximum principle, 
 for any $k\geq m$, the (admissible) solution $u_k$ to \eqref{approximate-DP1} shall satisfy
\begin{equation}
 \begin{aligned}
u_k\leq w_m \mbox{ in } M_m,  \nonumber
 \end{aligned}
 \end{equation}
which then completes the first proof of Theorem \ref{thm-c0-upper}.
 
\noindent {\bf Second proof}.
A key ingredient is \eqref{key1-main}.  
 As a result, for all $k$, in  $M_k$
 \begin{equation}
 \begin{aligned}
 \mathrm{tr} (g^{-1}A_{g_k}^{\tau,\alpha})=\,&
 \frac{\alpha(n\tau+2-2n)}{2(n-1)(n-2)}
 \left\{2(n-1)\Delta {u}_k+ (n-1)(n-2)|\nabla {u}_k|^2-R_{g}\right\} 
  \geq n\psi e^{2u_k}. \nonumber
 \end{aligned}
 \end{equation}

Another key tool is Theorem \ref{thm1-AM}, which yields that there is a smooth function $\tilde{u}_m\in C^\infty(M_m)$ so that
\begin{equation}
\label{AM-equ}
\begin{aligned}
 2(n-1)\Delta \tilde{u}_m+ (n-1)(n-2)|\nabla \tilde{u}_m|^2-R_g=e^{2\tilde{u}_m} \mbox{ in } M_m,   \mbox{  } 
 \lim_{x\rightarrow \partial M_m} \tilde{u}_m(x)=+\infty. \nonumber
\end{aligned}
\end{equation}
  We observe that  for all $k\geq m$ the solution $\tilde{u}_m$ 
may provide a supersolution of Dirichlet problem \eqref{approximate-DP1} on $M_k$. 
   The maximum principle gives for each $k\geq m$,
  \begin{equation}
 \begin{aligned}
 u_k \leq \tilde{u}_m+\frac{1}{2}\log\frac{\alpha(n\tau+2-2n)}{2n(n-1)(n-2)\inf_{M_m}\psi} \mbox{ in } M_m.  \nonumber
 \end{aligned}
 \end{equation}
 The second proof is complete.
 
 \end{proof}
 
 From assumption \eqref{key-assum1} there is a uniformly positive constant $\Lambda_1$ such that 
\begin{equation}
\label{key-assum2}
 \begin{aligned}
\,&  f(\lambda({g}^{-1}A_{\underline{g}}^{\tau,\alpha})) \geq \Lambda_1 \psi e^{2 \underline{u}} \mbox{ in } M \mbox{ for some $\Lambda_1>0$}, \,& \underline{g}=e^{2\underline{u}}g. 
 \end{aligned}
 \end{equation} 
 By comparison principle, we can derive $ \inf_{M_k} (u_k-\underline{u}) \geq  \min\left\{0, \frac{1}{2} \log\Lambda_1\right\}.$
 That is
 \begin{theorem}
 \label{thm-c0-lower}
 For the solution $u_k$ to \eqref{approximate-DP1}, we have $$u_k\geq \underline{u}+\frac{1}{2}\min\{0,  \log\Lambda_1 \} \mbox{ in } M_k.$$
 
\end{theorem}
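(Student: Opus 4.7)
The natural strategy is to apply the comparison principle (Lemma \ref{lemma-mp}) with a vertically shifted version of $\underline{u}$ playing the role of the subsolution. More precisely, set
\[
C_0=\tfrac{1}{2}\min\{0,\log\Lambda_1\}, \qquad \underline{v}:=\underline{u}+C_0,
\]
and aim to show that $\underline{v}$ is a subsolution of the PDE in \eqref{approximate-DP1} satisfying $\underline{v}\le u_k$ on $\partial M_k$. Once both facts are in place, Lemma \ref{lemma-mp} immediately gives $\underline{v}\le u_k$ in $M_k$, which is the desired inequality.

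\textbf{Verifying the subsolution property.} The key observation is the conformal/rescaling invariance highlighted earlier in the paper: inspecting \eqref{conformal-formula1}, the tensor $A_{\tilde g}^{\tau,\alpha}$ built from $\tilde g=e^{2u}g$ depends on $u$ only through $\nabla u$ and $\nabla^2 u$. Since $\underline{v}$ and $\underline{u}$ differ by a constant, $\nabla\underline{v}=\nabla\underline{u}$ and $\nabla^2\underline{v}=\nabla^2\underline{u}$, so
\[
A^{\tau,\alpha}_{e^{2\underline v}g}=A^{\tau,\alpha}_{e^{2\underline u}g}=A^{\tau,\alpha}_{\underline g}
\]
and hence $f(\lambda(g^{-1}A^{\tau,\alpha}_{e^{2\underline v}g}))=f(\lambda(g^{-1}A^{\tau,\alpha}_{\underline g}))$. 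Combining with the strengthened asymptotic hypothesis \eqref{key-assum2}, namely $f(\lambda(g^{-1}A^{\tau,\alpha}_{\underline g}))\ge \Lambda_1\psi e^{2\underline u}$, gives
\[
f\bigl(\lambda(g^{-1}A^{\tau,\alpha}_{e^{2\underline v}g})\bigr)\;\ge\;\Lambda_1\psi e^{2\underline u}\;=\;\psi e^{2\underline v}\cdot\Lambda_1 e^{-2C_0}.
\]
By the choice $C_0\le \tfrac12\log\Lambda_1$, the last factor is $\ge1$, so $\underline{v}$ is admissible and satisfies the subsolution inequality $f(\lambda(g^{-1}A^{\tau,\alpha}_{e^{2\underline v}g}))\ge \psi e^{2\underline v}$ throughout $M_k$.

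\textbf{Boundary comparison and conclusion.} Because $C_0\le 0$, on $\partial M_k$ we have $\underline{v}=\underline{u}+C_0\le \underline{u}=u_k$. With the subsolution inequality for $\underline{v}$ and the exact equality $f(\lambda(g^{-1}A^{\tau,\alpha}_{\tilde g_k}))=\psi e^{2u_k}$ for $u_k$, Lemma \ref{lemma-mp} applies verbatim (with $w=\underline v$ and $v=u_k$) and yields $\underline{v}\le u_k$ in $\bar M_k$, i.e. $u_k\ge \underline u+\tfrac12\min\{0,\log\Lambda_1\}$.

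\textbf{Anticipated obstacle.} There is no genuine obstacle: the only point requiring care is the invariance $A^{\tau,\alpha}_{e^{2C_0}\underline g}=A^{\tau,\alpha}_{\underline g}$, which is recorded in the rescaling remark following Corollary \ref{coro1-3} and also follows at once from \eqref{conformal-formula1}. The case split $\Lambda_1\ge 1$ versus $\Lambda_1<1$ is handled uniformly by the definition of $C_0$, so the argument collapses to a single application of Lemma \ref{lemma-mp}.
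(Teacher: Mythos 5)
Your proposal is correct and follows essentially the same route as the paper, which simply invokes the comparison principle with the shifted subsolution $\underline{u}+\frac{1}{2}\min\{0,\log\Lambda_1\}$; you have merely written out the details the paper leaves implicit (the scale invariance $A^{\tau,\alpha}_{tg}=A^{\tau,\alpha}_{g}$, the verification of the subsolution inequality from \eqref{key-assum2}, and the boundary comparison before applying Lemma \ref{lemma-mp}).
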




\subsection{Completion of proof of Theorem \ref{thm1}}

Let $K\subset\subset K_1\subset\subset M$ be two distinct  compact subsets of $M$, let $u_k$ be the admissible solution to 
Dirichlet problem \eqref{approximate-DP1} on $M_k$. 
For $K_1$ there is $k_0$ such that $K_1\subset\subset M_{k_0}$. By Theorems \ref{thm-c0-upper} and
\ref{thm-c0-lower}, there is a uniformly positive constant $C_0$ depending not on $k$ such that for any $k\geq k_0$,
\begin{equation}
 \begin{aligned}
 \sup_{K_1}|u_k|\leq C_0, \nonumber
 \end{aligned}
 \end{equation}
 then according to Theorem \ref{thm1-local}
 \begin{equation}
 \begin{aligned}
\,& |u_k|_{C^2(K)} \leq C_1, \,&\forall k\geq k_0 \nonumber
 \end{aligned}
 \end{equation}
 holds for a uniformly positive constant $C_1$ depending  not on $k$. 
 The Evans-Krylov theorem \cite{Evans82,Krylov83} and classical Schauder theory give
  \begin{equation}
 \begin{aligned}
\,& |u_k|_{C^{l,\alpha}(K)} \leq C_l=C_l(K,K_1), \,& \forall k\geq k_0, \mbox{  } l\geq 2, \mbox{ for some } 0<\alpha<1. \nonumber
 \end{aligned} 
 \end{equation}
 By diagonal process, we obtain a desired solution $u_\infty\in C^\infty(M)$ to equation \eqref{main-equ1}.
 
 From Theorem \ref{thm-c0-lower},
 $$u_\infty\geq \underline{u}+\frac{1}{2}\min\{0,  \log\Lambda_1 \} \mbox{ in } M.$$
 Combining with the completeness of $\underline{g}=e^{2\underline{u}}g$, we know $g_\infty=e^{2u_\infty}g$  is complete.
  This completes the proof of Theorem \ref{thm1}.
 
 \vspace{1mm}
 The method also works for prescribed scalar curvature equation. 
\begin{theorem}
\label{thm-scalarcurvature}
Assume $R_g<0$.
 Let $\psi$ be a smooth positive function which is bounded from above in terms of $-R_g$, i.e. there is a constant $\delta>0$ such that 
 $0<\psi\leq -\delta R_g$. Then there exists a smooth complete conformal metric $\tilde{g}$ with $R_{\tilde{g}}=-\psi.$
 \end{theorem}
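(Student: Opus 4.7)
The plan is to mirror the strategy used for Theorems \ref{existence1-compact} and \ref{thm1}. Exhaust $M$ by an increasing sequence $\{M_k\}$ of relatively compact open sets with smooth boundary. Writing $\tilde g=e^{2u}g$, the equation $R_{\tilde g}=-\psi$ is equivalent to the semilinear PDE
\begin{equation}
2(n-1)\Delta u+(n-1)(n-2)|\nabla u|^2-R_g=\psi e^{2u}, \nonumber
\end{equation}
whose left-minus-right side $F(u)$ is strictly decreasing in $u$, so the maximum principle applies to its sub- and super-solutions.

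The decisive structural point is that the hypothesis $0<\psi\leq -\delta R_g$ makes every sufficiently small constant into a \emph{global} subsolution: for any $c\leq -\tfrac12\log\delta$ one has $F(c)=-R_g-\psi e^{2c}\geq \psi/\delta-\psi e^{2c}\geq 0$. On the other hand, for a blow-up supersolution on each $M_k$ I would shift the Aviles-McOwen solution $\tilde u_k$ supplied by Theorem \ref{thm1-AM}: for any $C_k$ with $e^{2C_k}\geq 1/\inf_{M_k}\psi$,
\begin{equation}
F(\tilde u_k+C_k)=e^{2\tilde u_k}(1-\psi e^{2C_k})\leq 0, \nonumber
\end{equation}
and $\tilde u_k+C_k\to+\infty$ at $\partial M_k$. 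Exactly as in the proof of Theorem \ref{existence1-compact}, solving Dirichlet problems on $M_k$ with finite boundary data $\ell$ and letting $\ell\to+\infty$ yields a smooth solution $u_k$ with $u_k\to+\infty$ at $\partial M_k$, automatically squeezed between the constant $c$ and the supersolution $\tilde u_k+C_k$.

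For the uniform local two-sided $C^0$-bound, fix any compact $K\subset\subset M_m$ and take $k\geq m$. Comparison with the supersolution $\tilde u_m+C_m$ on $M_m$ (which blows up at $\partial M_m$ while $u_k$ remains bounded on $\bar M_m$) gives $u_k\leq \tilde u_m+C_m$ on $M_m$, and comparison with the constant subsolution gives $u_k\geq c$ on $M_k$. Combined with classical interior gradient estimates for quasilinear equations and Schauder regularity, one obtains uniform $C^{l,\alpha}$-bounds on compact subsets, and a diagonal subsequence converges to a smooth $u_\infty\in C^\infty(M)$ solving $R_{g_\infty}=-\psi$. Completeness of $g_\infty=e^{2u_\infty}g$ is then immediate from the uniform \emph{global} lower bound $u_\infty\geq c$, since $g_\infty\geq e^{2c}g$ and $g$ is already complete. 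The only genuinely delicate step is producing the blow-up solution $u_k$ on each $M_k$; once the two barriers above are in place, this is a direct adaptation of the Aviles-McOwen construction, and the rest of the argument is completely standard.
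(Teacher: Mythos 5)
Your proposal is correct and follows essentially the same route the paper intends when it says ``the method also works'': the hypothesis $0<\psi\le-\delta R_g$ supplies the global constant subsolution (this is exactly condition \eqref{key-assum1} with $\underline{g}=g$), the shifted Aviles--McOwen solution from Theorem \ref{thm1-AM} gives the local upper barrier, and the exhaustion, interior-estimate and diagonal arguments are those of Theorems \ref{existence1-compact} and \ref{thm1}. The only cosmetic difference is that you pass through blow-up solutions on each $M_k$ (an inner limit $\ell\to+\infty$) instead of solving the approximate Dirichlet problems with fixed finite boundary data as in \eqref{approximate-DP1}, which changes nothing essential.
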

 
 \section{Complete conformal metrics on Euclidean spaces}
 \label{section5}

 There are two fundamental models of complete manifolds: standard hyperbolic space  $(\mathbb{H}^n, g_{-1})$ and flat Euclidean space $(\mathbb{R}^n, g_{0})$.
 
 In this section we deal with the latter one, and 
 the purpose is to prove the existence of complete metric
  that is conformal to the standard Euclidean metric.
 The main ingredient is to construct certain admissible complete smooth conformal metrics on $\mathbb{R}^n$ which then confirms assumption \eqref{key-assum1} of Theorem \ref{thm1}.

  Let $(x_1,\cdots,x_n)$ be the standard coordinate systems, and $ r^2=|x|^2=\sum_{i=1}^n x_i^2.$
  Let $h$ be a smooth radial symmetric function: 
  \begin{equation}
   \label{function-h}
 \begin{aligned}
h=\beta\log(1+r^2) \mbox{ for } \beta>0. \nonumber
 \end{aligned}
 \end{equation}

 Since $g_0$ is flat, $A_{g_0}^{\tau,\alpha}\equiv0$.  Let $g=e^{2h}g_0$, then $$A_{{g}}^{\tau,\alpha}= 
 \frac{\alpha(\tau-1)}{n-2}\Delta h g-\alpha D^2 h
  +\frac{\alpha(\tau-2)}{2}|Dh|^2 g
  +\alpha  dh\otimes dh.$$
   Some simple computations (with respect to the flat metric $g_0$) give
  \begin{equation}
 \begin{aligned}
\,& |D r^2|^2=4r^2, \,& D^2 r^2=2g_0, \nonumber
 \end{aligned}
 \end{equation}
 \begin{equation}
 \begin{aligned}
\,& dh\otimes dh= \frac{\beta^2}{(1+r^2)^2}dr^2\otimes dr^2, \,& |D h|^2= \frac{4\beta^2 r^2}{(1+r^2)^2}, \nonumber
 \end{aligned}
 \end{equation}
  \begin{equation}
 \begin{aligned}
  D^2 h =\frac{2\beta}{1+r^2}g_0-\frac{\beta}{(1+r^2)^2}dr^2\otimes dr^2, \nonumber
 \end{aligned}
 \end{equation}
 \begin{equation}
 \begin{aligned}
\Delta h=\frac{2n\beta}{1+r^2}-\frac{4\beta r^2}{(1+r^2)^2}=\frac{2\beta (n-2) r^2+2\beta n}{(1+r^2)^2}.  \nonumber
 \end{aligned}
 \end{equation}
 Therefore, we obtain
  \begin{equation}
 \begin{aligned}
\,&
 \frac{\tau-1}{n-2}\Delta h g_0- D^2 h
  +\frac{\tau-2}{2}|Dh|^2 g_0
  + dh\otimes dh
  \\
  =\,& \frac{2\beta}{(1+r^2)^2} \left((\tau-2)(1+\beta)r^2+\frac{n(\tau-2)+2}{n-2} \right)g_0
  +\frac{\beta(1+\beta)}{(1+r^2)^2}dr^2\otimes dr^2, \nonumber
 \end{aligned}
 \end{equation}
 with the eigenvalues (with respect to $g_0$):
  \begin{equation}
 \begin{aligned}
 \lambda_1=\,& \cdots=\lambda_{n-1}=\frac{2\beta}{(1+r^2)^2} \left((\tau-2)(1+\beta)r^2+\frac{n(\tau-2)+2}{n-2} \right),
 \\
 \lambda_n=\,& \frac{2\beta}{(1+r^2)^2} \left( \tau (1+\beta)r^2+\frac{n(\tau-2)+2}{n-2} \right).
 \end{aligned}
 \end{equation}
 
 \subsection{Modified Schouten tensors}
 With replacing \eqref{tau-alpha} by a stronger condition
 \begin{equation}
\label{tau-alpha-2}
 \begin{aligned}
\,& \tau<0, \,&\mbox{ if } \alpha=-1; \\
\,& \tau>\max\left\{2,1+(n-2)(1-\kappa_\Gamma\vartheta_{\Gamma})\right\}, \,&\mbox{ if } \alpha=1,
 \end{aligned}
 \end{equation}
 we prove
 \begin{theorem}
 Suppose \eqref{elliptic}, \eqref{concave}, \eqref{homogeneous-1}, \eqref{tau-alpha-2} hold. Let $\delta>0$ be fixed.
  For any smooth function 
 $\psi$ with 
 \begin{equation}
 \label{delta1}
 \begin{aligned}
 0<\psi(x) \leq \Lambda_2 |x|^{-2-\delta} \mbox{ in } \mathbb{R}^n \mbox{ whenever } |x|\gg1.
 \end{aligned}
 \end{equation}
 There is a smooth admissible complete metric $\tilde{g}$ that is conformal to $g_0$ such that
  \begin{equation}
 \begin{aligned}
 f(\lambda(\tilde{g}^{-1}A_{\tilde{g}}^{\tau,\alpha}))=\psi, \mbox{  } \lambda(\tilde{g}^{-1}A_{\tilde{g}}^{\tau,\alpha})\in\Gamma \mbox{ in } \mathbb{R}^n.  \nonumber
 \end{aligned}
 \end{equation}
 \end{theorem}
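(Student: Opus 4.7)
The plan is to feed the main existence result, Theorem \ref{thm1}, with an admissible complete subsolution metric of the form $\underline{g} = e^{2\underline{u}}g_0$, where $\underline{u} = \beta \log(1+r^2)$ for a positive constant $\beta$ to be fixed later. The eigenvalue formulas for $\lambda(g_0^{-1}A_{\underline{g}}^{\tau,\alpha})$ are already recorded in the paragraph above the theorem, so the verification reduces to three routine checks: admissibility everywhere, completeness, and the asymptotic condition \eqref{key-assum1}.

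First I would verify admissibility $\lambda(\underline{g}^{-1}A_{\underline{g}}^{\tau,\alpha}) \in \Gamma$ on all of $\mathbb{R}^n$. Since $\underline{g}^{-1} = (1+r^2)^{-2\beta} g_0^{-1}$ and $\Gamma$ is a cone, it suffices to check $\alpha \lambda_i \in \Gamma$ for the $\lambda_i$ computed in the excerpt. Under $\alpha = 1$, $\tau > 2$, every coefficient appearing in the two bracketed expressions $(\tau-2)(1+\beta)r^2 + \frac{n(\tau-2)+2}{n-2}$ and $\tau(1+\beta)r^2 + \frac{n(\tau-2)+2}{n-2}$ is strictly positive for all $r \geq 0$; under $\alpha = -1$, $\tau < 0$, every coefficient is strictly negative. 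In both cases all $n$ eigenvalues $\alpha \lambda_i$ are strictly positive, hence lie in $\Gamma_n \subseteq \Gamma$. Completeness of $\underline{g}$ is immediate since the radial arclength $\int_0^\infty (1+s^2)^\beta\,ds$ diverges for any $\beta > 0$.

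It then remains to verify the asymptotic condition \eqref{key-assum1}. From the eigenvalue formulas, as $r \to \infty$,
\[
\lambda(g_0^{-1}A_{\underline{g}}^{\tau,\alpha}) = \frac{2\alpha\beta(1+\beta)}{r^2}(\tau-2,\ldots,\tau-2,\tau) + o(r^{-2}),
\]
and the leading vector $\mu_\infty := 2\alpha\beta(1+\beta)(\tau-2,\ldots,\tau-2,\tau)$ has all entries of the same (positive) sign in both cases of \eqref{tau-alpha-2}, hence lies in $\Gamma$ with $f(\mu_\infty) > 0$. By the degree-one homogeneity \eqref{homogeneous-1} and continuity of $f$, this yields
\[
f(\lambda(\underline{g}^{-1}A_{\underline{g}}^{\tau,\alpha})) = (1+r^2)^{-2\beta} f(\lambda(g_0^{-1}A_{\underline{g}}^{\tau,\alpha})) \geq c_0\, r^{-2-4\beta}
\]
for all $r$ sufficiently large, with $c_0 = \frac{1}{2}f(\mu_\infty) > 0$. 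Choosing $\beta = \delta/4$ and invoking the decay hypothesis \eqref{delta1} gives $f(\lambda(\underline{g}^{-1}A_{\underline{g}}^{\tau,\alpha}))/\psi \geq c_0/\Lambda_2 > 0$ outside a sufficiently large ball, which is exactly \eqref{key-assum1}.

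With every hypothesis of Theorem \ref{thm1} verified, that theorem delivers the desired smooth admissible complete conformal metric $\tilde{g}$ solving $f(\lambda(\tilde{g}^{-1}A_{\tilde{g}}^{\tau,\alpha})) = \psi$ on $\mathbb{R}^n$. The only real substance in the argument is the match between the subsolution decay $r^{-2-4\beta}$ and the hypothesis $r^{-2-\delta}$ on $\psi$, which forces $\beta \leq \delta/4$ and explains the quantitative weight appearing in \eqref{delta1}. A minor but essential point is that the strengthened range \eqref{tau-alpha-2} (namely $\tau > 2$ instead of merely $\tau > 1+(n-2)(1-\kappa_\Gamma\vartheta_\Gamma)$ when $\alpha = 1$, and $\tau < 0$ instead of $\tau < 1$ when $\alpha = -1$) is precisely what fixes the signs of the leading eigenvalues above; without it, this radial ansatz would lose admissibility near infinity.
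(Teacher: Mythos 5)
Your proposal is correct and follows essentially the same route as the paper: it takes the radial conformal factor $\beta\log(1+r^2)$ whose eigenvalues are computed just before the theorem, checks admissibility and the lower bound $\lambda(g_0^{-1}A_{\underline{g}}^{\tau,\alpha})\gtrsim r^{-2}$ under \eqref{tau-alpha-2}, matches the resulting decay $r^{-2-4\beta}$ against \eqref{delta1} via $\beta=\delta/4$ to verify \eqref{key-assum1}, and concludes by Theorem \ref{thm1}. You merely spell out details (admissibility in both sign cases, completeness, the explicit choice of $\beta$) that the paper leaves implicit.
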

 \begin{proof}
  Given \eqref{tau-alpha-2} we can check that
   $\lambda(g_0^{-1}A_{\tilde{g}}^{\tau,\alpha})\geq c_0r^{-2}g_0$ for some $c_0>0$.
   The proof is complete by using Theorem \ref{thm1}.
 \end{proof}
 
 \subsection{Ricci tensor}
  In this  case, $\tau=0, \alpha=-1$. We divide this case into two subcases.
  \begin{theorem}
  Let  $f$ satisfy \eqref{elliptic}, \eqref{concave}, \eqref{homogeneous-1}, and
   we assume that for some $\delta>0$, $\psi$ satisfies \eqref{delta1}. Then there exists a
   smooth admissible complete conformal metric $\tilde{g}=e^{2u}g_0$ satisfying with 
    $$ f(\lambda(-\tilde{g}^{-1}Ric_{\tilde{g}}))=\psi, \mbox{  }
     \lambda(-\tilde{g}^{-1}Ric_{\tilde{g}})\in\Gamma \mbox{ in } \mathbb{R}^n,$$  
    provided that $\Gamma\neq\Gamma_n$.
  \end{theorem}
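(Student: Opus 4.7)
My plan is to invoke Theorem \ref{thm1} (with $\tau=0$, $\alpha=-1$, which trivially satisfies \eqref{tau-alpha} since $\tau<1$) using a radial subsolution $\underline{g}=e^{2\underline{u}}g_0$ with $\underline{u}=\beta\log(1+r^2)$, $\beta>0$ to be fixed in terms of $\delta$. Since the radial length element $(1+r^2)^{\beta}\,dr$ dominates $dr$, $\underline{g}$ is automatically complete on $\mathbb{R}^n$. Specializing the computations already carried out in this section for $h=\beta\log(1+r^2)$ to $\tau=0$ and negating (because $\alpha=-1$), the $(0,2)$-tensor $A_{\underline{g}}^{0,-1}$ is diagonal in the orthonormal frame $(\partial_r,\partial_{\theta_1},\ldots,\partial_{\theta_{n-1}})$ of $g_0$, with an $(n-1)$-fold angular eigenvalue $\mu_{\mathrm{ang}}(r)=\frac{2\beta}{(1+r^2)^2}\bigl(2(1+\beta)r^2+\frac{2(n-1)}{n-2}\bigr)$ and a single radial eigenvalue $\mu_{\mathrm{rad}}(r)=\frac{4\beta(n-1)}{(n-2)(1+r^2)^2}$. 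Both are strictly positive on $\mathbb{R}^n$, so $\lambda(\underline{g}^{-1}A_{\underline{g}}^{0,-1})\in\Gamma_n\subseteq\Gamma$ everywhere, verifying \eqref{admissible-metric1}.

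\textbf{Where $\Gamma\neq\Gamma_n$ is used.} The delicate task is to verify \eqref{key-assum1}. For $r\gg 1$ one has $\mu_{\mathrm{rad}}/\mu_{\mathrm{ang}}=O(r^{-2})\to 0$, and the $1$-homogeneity \eqref{homogeneous-1} yields
\[
f(\lambda(\underline{g}^{-1}A_{\underline{g}}^{0,-1}))=\frac{\mu_{\mathrm{ang}}(r)}{(1+r^2)^{2\beta}}\, f\Bigl(1,\ldots,1,\tfrac{\mu_{\mathrm{rad}}(r)}{\mu_{\mathrm{ang}}(r)}\Bigr).
\]
The crucial point is that the boundary vector $(1,\ldots,1,0)$ already lies in the \emph{open} cone $\Gamma$ whenever $\Gamma\neq\Gamma_n$, so that $f(1,\ldots,1,0)>0$. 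Indeed, $\Gamma\neq\Gamma_n$ is equivalent to $\kappa_\Gamma\geq 1$ (Definition \ref{yuan-kappa}), which furnishes a vector in $\Gamma$ with a strictly negative coordinate; symmetrizing over the symmetric group and combining convexly with a large multiple of $\vec{\bf 1}\in\Gamma$, then rescaling, produces $(1,\ldots,1,-\varepsilon)\in\Gamma$ for some small $\varepsilon>0$, and the midpoint with $(1,\ldots,1,\varepsilon)\in\Gamma$ is the desired $(1,\ldots,1,0)\in\Gamma$ by openness and convexity of $\Gamma$.

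\textbf{Conclusion and obstacle.} Combining these observations, there exists $c_0>0$ with $f(\lambda(\underline{g}^{-1}A_{\underline{g}}^{0,-1}))\geq c_0 r^{-2-4\beta}$ for $r$ large. Choosing $\beta\leq\delta/4$, the decay hypothesis \eqref{delta1} on $\psi$ gives $f(\lambda(\underline{g}^{-1}A_{\underline{g}}^{0,-1}))/\psi\geq\Lambda_0>0$ outside a compact set, which is exactly \eqref{key-assum1}. Theorem \ref{thm1} then provides the required smooth admissible complete conformal metric $\tilde{g}=e^{2u}g_0$ solving $f(\lambda(-\tilde{g}^{-1}Ric_{\tilde{g}}))=\psi$, the overall factor $n-2$ relating $-\tilde{g}^{-1}Ric_{\tilde{g}}$ to $\tilde{g}^{-1}A_{\tilde{g}}^{0,-1}$ being absorbed into $\psi$ by homogeneity. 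The main obstacle is precisely the cone-theoretic claim $(1,\ldots,1,0)\in\Gamma$, which is also what renders the hypothesis sharp: for $\Gamma=\Gamma_n$ one has $f(1,\ldots,1,0)=0$, so the effective decay of $f(\lambda(\underline{g}^{-1}A_{\underline{g}}^{0,-1}))$ is instead dictated by the faster-decaying radial eigenvalue $\mu_{\mathrm{rad}}\sim r^{-4-4\beta}$, which cannot dominate $r^{-2-\delta}$ for any $\beta>0$.
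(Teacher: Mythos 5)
Your proposal is correct and follows essentially the same route as the paper's proof: the same radial model $\underline{u}=\beta\log(1+r^2)$ with the same eigenvalue computation, the same key cone fact that $\Gamma\neq\Gamma_n$ gives $(1,\cdots,1,0)\in\Gamma$ and hence $f(1,\cdots,1,0)>0$, the same choice of $\beta$ (the paper takes $\beta=\delta/4$), and the conclusion via Theorem \ref{thm1} with $\tau=0$, $\alpha=-1$. One small caution in the extra detail you supply (which the paper merely asserts): averaging over the full symmetric group would collapse your vector with a negative entry to a multiple of $\vec{\bf 1}$, so you should instead symmetrize only over permutations of the positive slots, or simply use $\Gamma+\overline{\Gamma_n}\subseteq\Gamma$ together with rescaling and componentwise monotonicity, after which $(1,\cdots,1,-\varepsilon)\in\Gamma$ and hence $(1,\cdots,1,0)\in\Gamma$ follow as you intend.
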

  \begin{proof}
  The eigenvalues of $-Ric_{\tilde{g}}$ with respect to $g_0$ are as follows:
   \begin{equation}
 \begin{aligned}
 \lambda_1=\,& \cdots=\lambda_{n-1}=\frac{2\beta}{(1+r^2)^2} \left(2(1+\beta)r^2+\frac{2n-2}{n-2} \right)\geq \frac{4\beta(1+\beta)r^2}{(1+r^2)^2},
 \\
 \lambda_n=\,& \frac{4(n-1)\beta}{(1+r^2)^2(n-2)}. \nonumber
 \end{aligned}
 \end{equation}
 Since $\Gamma\neq \Gamma_n$, $(1,\cdots,1,0)\in \Gamma$ and then $c_0':=f(1,\cdots,1,0)>0$ is well defined. So
  \begin{equation}
 \begin{aligned}
 f(\lambda(-g_0^{-1}Ric_{\tilde{g}}))\geq\frac{4 \beta(1+\beta) c_0' r^2}{(1+r^2)^2},\nonumber
 \end{aligned}
 \end{equation}
  that is
  \begin{equation}
 \begin{aligned}
 f(\lambda(-\tilde{g}^{-1}Ric_{\tilde{g}}))\geq\frac{4 \beta(1+\beta) c_0' r^2}{(1+r^2)^{2+2\beta}}. \nonumber
 \end{aligned}
 \end{equation}
 This completes the proof by setting $\beta=\frac{\delta}{4}$.
 \end{proof}

 However, it is much more complicated for the case $\Gamma=\Gamma_n$. While for special cases $f=(\sigma_{n,k})^{1/(n-k)}$,  $0\leq k<n$, 
where $\sigma_{n,k}=C_n^k\sigma_n/\sigma_k$, we obtain the following theorem.
 \begin{theorem}
 There exists a smooth complete metric $\tilde{g}$, which is conformal to $g_0$, with negative Ricci curvature and
  \begin{equation}
 \begin{aligned}
\,& \sigma_{n,k}(-\tilde{g}^{-1}Ric_{\tilde{g}})=\psi \mbox{ in } \mathbb{R}^n, \,& 0\leq k<n, \nonumber
 \end{aligned}
 \end{equation}
 provided  $\psi\in C^\infty(\mathbb{R}^n)$, $0<\psi(x) \leq \Lambda_2 |x|^{-2(n-1-k)-\delta} \mbox{ for } |x|\gg1, \mbox{ for some } \delta>0$.
  \end{theorem}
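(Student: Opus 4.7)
The plan is to apply Theorem \ref{thm1} on $(\mathbb{R}^n, g_0)$ with the normalized curvature function $f = \sigma_{n,k}^{1/(n-k)}$, cone $\Gamma = \Gamma_n$, and parameters $\tau = 0$, $\alpha = -1$. Since $f$ is smooth, concave and degree-one homogeneous on $\Gamma_n$, and the equation $f(\lambda) = \psi^{1/(n-k)}$ is equivalent to $\sigma_{n,k}(\lambda) = \psi$, this reduces the problem to exhibiting an admissible smooth complete conformal metric $\underline{g}$ on $(\mathbb{R}^n, g_0)$ satisfying the asymptotic comparison \eqref{key-assum1}. Admissibility in $\Gamma_n$ of the resulting $\tilde g$ automatically forces $Ric_{\tilde g} < 0$, as required.

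For $\underline{g}$ I would reuse the radial ansatz already treated in the preceding subsection, namely $\underline{g} = e^{2h} g_0$ with $h(x) = \beta \log(1 + |x|^2)$ for a small $\beta > 0$ to be chosen. The explicit eigenvalue formulas derived there show that all eigenvalues of $-\underline{g}^{-1} Ric_{\underline{g}}$ are strictly positive for every $\beta > 0$, hence $\lambda(-\underline{g}^{-1}Ric_{\underline{g}}) \in \Gamma_n$ everywhere in $\mathbb{R}^n$; completeness follows from $e^h \geq 1$, which gives $\underline{g} \geq g_0$.

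The crux is then verifying \eqref{key-assum1}. As $r = |x| \to \infty$, the tangential eigenvalue (multiplicity $n-1$) of $-\underline{g}^{-1} Ric_{\underline{g}}$ behaves like $C_t \beta(1+\beta) r^{-2-4\beta}$, while the radial one behaves like $C_r \beta r^{-4-4\beta}$. For $k < n$ the small radial eigenvalue enters $\sigma_n(\mu)$ but is suppressed in $\sigma_k(\mu)$, which is governed by the $n-1$ tangential ones. A direct bookkeeping yields
\[
\sigma_{n,k}(\mu) \;\sim\; C\,\beta^{n-k}\, r^{-2(n-1-k) - 4 - 4\beta(n-k)} \quad \text{as } r \to \infty.
\]
Taking $\beta = \beta(\delta) > 0$ sufficiently small and comparing with the hypothesis $\psi \leq \Lambda_2 |x|^{-2(n-1-k)-\delta}$ at infinity produces the required bound $\sigma_{n,k}(\lambda(-\underline{g}^{-1} Ric_{\underline{g}})) \geq \Lambda_0 \psi$ outside a large compact set of $\mathbb{R}^n$.

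The main obstacle is this last asymptotic match: since $\sigma_n$ inherits the degeneracy of the small radial eigenvalue while $\sigma_k$ does not, the ratio $\sigma_n/\sigma_k$ carries an extra polynomial factor beyond what the tangential scaling alone would predict, and the smallness of $\beta$ must be calibrated carefully against the decay exponent of $\psi$. Once \eqref{admissible-metric1} and \eqref{key-assum1} are both established, a direct application of Theorem \ref{thm1} produces a smooth admissible complete conformal metric $\tilde{g} = e^{2u_\infty} g_0$ solving $\sigma_{n,k}(-\tilde{g}^{-1} Ric_{\tilde{g}}) = \psi$ on $\mathbb{R}^n$, with $Ric_{\tilde{g}} < 0$ automatic from admissibility in $\Gamma_n$.
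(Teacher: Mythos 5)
Your reduction to Theorem \ref{thm1} with $f=\sigma_{n,k}^{1/(n-k)}$, $\Gamma=\Gamma_n$, $\tau=0$, $\alpha=-1$ is the right framework, and your eigenvalue asymptotics for $\underline{g}=e^{2h}g_0$, $h=\beta\log(1+r^2)$, are correct: the tangential eigenvalues of $-\underline{g}^{-1}Ric_{\underline{g}}$ decay like $r^{-2-4\beta}$ and the radial one like $r^{-4-4\beta}$, so $\sigma_{n,k}\bigl(\lambda(-\underline{g}^{-1}Ric_{\underline{g}})\bigr)\sim C\,r^{-2(n-1-k)-4-4\beta(n-k)}$. But this formula is exactly what defeats your last step: to verify \eqref{key-assum1} against a $\psi$ that actually attains the permitted decay $|x|^{-2(n-1-k)-\delta}$ you would need $2(n-1-k)+4+4\beta(n-k)\leq 2(n-1-k)+\delta$, i.e. $\delta\geq 4+4\beta(n-k)$. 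The obstruction is the additive $4$, which comes from the $r^{-4}$ decay of the radial Ricci eigenvalue of this ansatz and is independent of $\beta$; shrinking $\beta$ only removes the $4\beta(n-k)$ contribution. Concretely, for $\delta=1$ and $\psi(x)=(1+|x|^2)^{-(n-1-k)-\delta/2}$, the ratio $\sigma_{n,k}\bigl(\lambda(-\underline{g}^{-1}Ric_{\underline{g}})\bigr)/\psi$ tends to $0$ at infinity for every $\beta>0$, so \eqref{key-assum1} fails for any $\Lambda_0$ and Theorem \ref{thm1} cannot be invoked. As written, your construction only yields the theorem under the much stronger hypothesis $\delta>4$, not for arbitrary $\delta>0$.

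The deeper point is that in the quotient $\sigma_n/\sigma_k$ the smallest (radial) eigenvalue enters the numerator as a factor, so the log-ansatz that works in the preceding subsections, where $\Gamma\neq\Gamma_n$ and $f(1,\cdots,1,0)>0$ allow one to discard the radial direction entirely, is no longer adequate when $\Gamma=\Gamma_n$. This is precisely why the paper singles out this case as ``much more complicated'' and points to the radially symmetric sub- and supersolution constructions of \cite{Sui2017JGA}: one needs a different, more delicately calibrated radial profile (controlling how fast $rh'$ approaches its limit, hence how slowly the radial eigenvalue decays, in terms of $\delta$ and $n-k$) before the asymptotic comparison \eqref{key-assum1} can be checked. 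That asymptotic match is the step you yourself flag as the main obstacle, and in your proposal it is asserted rather than proved; with the specific $\underline{g}$ you chose it is in fact false for small $\delta$, so the proof has a genuine gap at its crux.
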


  When $f$ is the $(k,l)$-quotient functions, $f=(C_n^l\sigma_k/C_n^k\sigma_l)^{1/(k-l)}$, 
 the problem for negative Ricci tensor on Euclidean spaces was discussed in \cite{Sui2017JGA} by constructing radial symmetric subsolutions and supersolutions.
 
\section{Conformal deformation of the $-A_g$}
\label{section6}
 
  The conformally prescribed curvature problem for the $-A_g$ is in general rather hard to handle, 
  since the counterexample of interior estimates 
  of solutions to some prescribed curvature equations for conformal deformation of $-A_g$ \cite{ShengTrudingerWang2007}. 
  
    Recall the definition of type 2 cone given by Caffarelli-Nirenberg-Spruck \cite{CNS3}.
\begin{definition}[\cite{CNS3}]\label{def-type2}
$\Gamma$ is said to be of type 1 if the positive $\lambda_i$ axes belong to $\partial\Gamma$; otherwise it is called of type 2.
\end{definition}

  Assuming that $\Gamma$ is of type 2, we study the conformal deformation of  $-A_g$, 
 \begin{equation}
 \label{equ-6}
 \begin{aligned}
\,& f(\lambda(-\tilde{g}^{-1}A_{\tilde{g}}))=\psi, \,& \lambda(g_{\tilde{g}}^{-1}A_{\tilde{g}})\in \Gamma, \mbox{  }\tilde{g}=e^{2u}g \mbox{ in } M. 
 \end{aligned}
 \end{equation}
Under the conformal change $\tilde{g}=e^{2u}g$, by \eqref{conformal-formula1},
 the equation \eqref{equ-6} is  reduced to 
  \begin{equation}
 \begin{aligned}
f\left(\lambda\left(g^{-1}
 ( \nabla^2 u
  +\frac{1}{2}|\nabla u|^2 g
 - du\otimes du- A_{g})\right)\right)=\psi e^{2u}.  
 \end{aligned}
 \end{equation}

 From Definitions \ref{def-type2} and \ref{yuan-kappa},
  $\kappa_{\Gamma}=n-1$ is equivalent to  $\Gamma$ is of type 2.
  According to Lemma \ref{yuan-k+1}, $f$ is fully uniform ellipticity,
 provided  \eqref{elliptic}, \eqref{concave}, \eqref{addistruc} hold, that is 
 \begin{equation}
 \begin{aligned}
\,& f_i(\lambda)\geq \vartheta_\Gamma \sum_{j=1}^n f_j(\lambda), \,& \forall 1\leq i\leq n,  \nonumber
 \end{aligned}
 \end{equation}
which then allows us to derive the local estimates for gradient and Hessian of solutions.

For the case when $(M,g)$ is a compact Riemannian manifold 
with boundary, as in Section \ref{section3},
the locally lower and upper barriers are respectively given by $h^+=\log \frac{k\delta^2}{k\rho+\delta^2}$ and $h^-=\beta'\log(1+\frac{\rho}{\delta^2})$ $(0<\beta'\ll1)$. And then we can prove
\begin{theorem}
Let $(M,g)$ be a compact Riemannian manifold with smooth boundary $\partial M$
 and $\lambda(-g^{-1}A_g)\in \Gamma$ in $M$.
Suppose \eqref{elliptic}, \eqref{concave}, \eqref{homogeneous-1} hold and that $\psi\in C^\infty(\bar M)$, $\psi>0$ in $\bar M$.
If, in addition, the corresponding cone $\Gamma$ is of type 2, there is on $M$ a smooth complete conformal metric 
 ${g}_\infty=e^{2u_\infty}g$ 
 satisfying 
 \eqref{equ-6}. 
 
\end{theorem}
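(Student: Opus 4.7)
The plan is to follow the blueprint of Theorem \ref{existence1-compact}, adapted to the $-A_g$ setting. The type 2 hypothesis is the key input that makes this possible: by Definitions \ref{yuan-kappa} and \ref{def-type2} it is equivalent to $\kappa_\Gamma = n-1$, and then Lemma \ref{yuan-k+1} yields
\begin{equation*}
f_i(\lambda) \geq \vartheta_\Gamma \sum_{j=1}^n f_j(\lambda), \quad 1 \leq i \leq n,
\end{equation*}
for every $\lambda \in \Gamma$. This is full uniform ellipticity for equation \eqref{equ-6}; it plays the role of Proposition \ref{keykey} in the present setting, where the effective coefficient profile of the linearization ($\varrho \equiv -1$, $a = 1/2$, $b = -1$, $c = 0$, with no $\Delta u\,g$ contribution) does not fit the admissible range \eqref{tau-alpha}.

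First I would exhaust $M$ via the Dirichlet problems
\begin{equation*}
f(\lambda(-g^{-1}A_{\tilde g_k})) = \psi e^{2u_k} \text{ in } M, \quad u_k|_{\partial M} = \log k, \quad \tilde g_k = e^{2u_k}g.
\end{equation*}
Since $\lambda(-g^{-1}A_g) \in \Gamma$, the constant function $\underline u \equiv 0$ is a smooth admissible function. Coupling this with the full uniform ellipticity above, the interior second order estimate of Theorem \ref{thm1-local}, and the barrier pair $h^+ = \log\bigl(k\delta^2/(k\rho+\delta^2)\bigr)$ and $h^- = \beta'\log(1+\rho/\delta^2)$ flagged in the statement, one obtains existence of $u_k \in C^\infty(\bar M)$ by the continuity method, in the manner of Theorem \ref{thm-existence-general00}.

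The next step is uniform local $C^0$ control. The analogue of Lemma \ref{lemma-c0general} applied with $\underline w \equiv 0$ and $\varphi = \log k$ yields the uniform lower bound $\inf_M u_k \geq \tfrac{1}{2}\inf_M \log\bigl(f(\lambda(-g^{-1}A_g))/\psi\bigr)$, a finite constant independent of $k$. For the upper bound, \eqref{key1-main} applied to $\lambda(\tilde g_k^{-1}(-A_{\tilde g_k}))$ combined with the conformal trace identity gives
\begin{equation*}
2(n-1)\Delta u_k + (n-1)(n-2)|\nabla u_k|^2 - R_g \geq 2n(n-1)\psi e^{2u_k};
\end{equation*}
comparing via the maximum principle with a rescaled Aviles-McOwen metric produced by Theorem \ref{thm1-AM} yields $u_k \leq \tilde u + C$ uniformly on any $K \subset\subset M$.

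With uniform local $C^0$ and $C^2$ estimates in hand, Evans-Krylov and Schauder theory upgrade to $C^{l,\alpha}_{\mathrm{loc}}$ for every $l$, and a diagonal subsequence produces $u_\infty \in C^\infty(M)$ solving \eqref{equ-6}. Completeness of $g_\infty = e^{2u_\infty}g$ follows by passing to the limit $k \to \infty$ in the lower barrier $u_k \geq h^+$: one gets $u_\infty(x) + \log \rho(x) \geq -C_0$ in a neighborhood of $\partial M$, so $e^{u_\infty} \gtrsim 1/\rho$ and the $g_\infty$-distance to $\partial M$ diverges logarithmically. The main obstacle is really the very first step, since equation \eqref{equ-6} does not fall within the family of Section \ref{section3} (its coefficients lie outside \eqref{tau-alpha}); none of the ellipticity-based estimates is available off the shelf, and each has to be derived from the full uniform ellipticity that the type 2 hypothesis alone supplies.
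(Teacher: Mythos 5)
Your proposal is correct and follows essentially the same route the paper sketches for this theorem: full uniform ellipticity from the type 2 hypothesis ($\kappa_\Gamma=n-1$ via Lemma \ref{yuan-k+1}), approximate Dirichlet problems with boundary data $\log k$ solved as in Section \ref{section3}, the barriers $h^+=\log\frac{k\delta^2}{k\rho+\delta^2}$ and $h^-=\beta'\log(1+\frac{\rho}{\delta^2})$, the trace inequality \eqref{key1-main} combined with the Aviles--McOwen solution of Theorem \ref{thm1-AM} for the uniform upper bound, and Evans--Krylov/Schauder plus a diagonal argument, with completeness coming from the lower barrier. Your $C^0$ bounds, the identity $\alpha(n\tau+2-2n)=n-2$ implicit in your trace inequality, and the use of $\underline{u}\equiv 0$ as admissible function all check out, so this matches the paper's intended argument.
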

 

\begin{theorem}
Suppose  \eqref{elliptic}, \eqref{concave}, \eqref{homogeneous-1} hold, and the corresponding cone $\Gamma$ is of type 2. 
Assume that $(M,g)$ is a complete noncompact Riemannian manifold with
 \begin{equation}
 \begin{aligned}
\,& f( \lambda(-g^{-1}A_g))\geq \delta\psi, \,& \lambda(-g^{-1}A_g)\in \Gamma \mbox{ holds for some $\delta>0$,} \mbox{ in } M.
\nonumber
 \end{aligned} 
 \end{equation}
Then there exists at least one smooth function $u_\infty\in C^\infty(M)$ such that $g_\infty=e^{2u_{\infty}}g$ is a smooth admissible complete
 metric satisfying \eqref{equ-6}.
\end{theorem}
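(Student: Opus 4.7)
The plan is to adapt the exhaustion scheme that proved Theorem \ref{thm1}. Fix an exhaustion $\bar M_k \subset\subset M_{k+1}$ with $M=\bigcup_k M_k$ by compact manifolds with smooth boundary. Since the background metric $g$ is itself admissible and satisfies $f(\lambda(-g^{-1}A_g))\geq \delta\psi$, the function $\underline{u}\equiv 0$ plays the role of a subsolution in the sense of \eqref{key-assum1} (with $\Lambda_0=\delta$). The compact-with-boundary existence theorem proved just above in Section \ref{section6} produces, for each $k$, a unique smooth admissible solution $u_k\in C^\infty(\bar M_k)$ of the Dirichlet problem
\[
f(\lambda(-\tilde g_k^{-1} A_{\tilde g_k}))=\psi \text{ in } M_k, \qquad u_k|_{\partial M_k}=0,
\]
where $\tilde g_k=e^{2u_k}g$.

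For the local upper $C^0$-bound on the $u_k$ I would mimic the first proof of Theorem \ref{thm-c0-upper}: the same compact-with-boundary theorem of Section \ref{section6}, applied to $\bar M_m$ for any fixed $m$, yields a smooth $w_m\in C^\infty(M_m)$ such that $e^{2w_m}g$ is a complete conformal metric on $M_m$ solving \eqref{equ-6} with $w_m(x)\to+\infty$ as $x\to\partial M_m$. A Lemma \ref{lemma-mp}-type comparison then forces $u_k\leq w_m$ on $M_m$ for every $k\geq m$, giving a uniform upper bound on each compact subset of $M$. The corresponding lower bound follows directly from Lemma \ref{lemma-c0general} applied with $\underline{w}=0$, which produces
\[
u_k\geq \tfrac{1}{2}\min\{0,\,\log \delta\} \quad\text{on } \bar M_k,
\]
uniformly in $k$.

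With $C^0$-estimates in hand, the type 2 hypothesis forces $\kappa_\Gamma=n-1$, so Lemma \ref{yuan-k+1} promotes $f$ to a fully uniformly elliptic operator, $f_i(\lambda)\geq\vartheta_\Gamma\sum_j f_j(\lambda)$ in every direction. This is precisely the input required for the interior gradient and Hessian estimates recorded in Section \ref{section6} (the analogues of Theorem \ref{thm1-local}); Evans-Krylov together with Schauder theory then deliver uniform $C^{l,\alpha}$-bounds on each compact subset of $M$. A diagonal extraction gives a smooth admissible $u_\infty\in C^\infty(M)$ solving \eqref{equ-6}. Completeness of $g_\infty=e^{2u_\infty}g$ is automatic: the uniform lower bound passes to the limit, so $g_\infty$ dominates a fixed positive multiple of the complete metric $g$.

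The main obstacle is the construction of the local upper barrier $w_m$, which reduces entirely to the fully nonlinear Loewner-Nirenberg/Aviles-McOwen-type theorem for manifolds with boundary established just above in Section \ref{section6}. Once that is granted, the delicate asymptotic condition \eqref{key-assum1} that drove the argument for Theorem \ref{thm1} is replaced here by the much simpler hypothesis that $g$ itself is an admissible subsolution everywhere at once, so no separate construction of barriers near infinity is required.
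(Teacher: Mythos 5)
Your proposal is correct and follows essentially the same route the paper intends: take $\underline{u}\equiv 0$ as the global subsolution, solve approximate Dirichlet problems on an exhaustion, obtain the local upper bound from the fully nonlinear Loewner--Nirenberg/Aviles--McOwen-type complete metric of Section \ref{section6} and the lower bound by comparison with $\underline{u}$, then use the full uniform ellipticity coming from $\kappa_\Gamma=n-1$ (type 2) for the local $C^2$ estimates, Evans--Krylov, Schauder and a diagonal argument, with completeness from $u_\infty\geq\frac12\min\{0,\log\delta\}$. The only cosmetic point is that solvability of the finite-boundary-data Dirichlet problems rests on the barriers $h^{\pm}$ constructed in Section \ref{section6} (the analogue of Theorem \ref{thm-existence-general00}) rather than literally on the complete-metric theorem you cite, but this is exactly the machinery the paper provides.
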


The results 
are applicable to conformal deformation of  Einstein tensor and of modified Schouten tensors $A_g^{\tau,\alpha}$ for  
  $\tau>n-1,$ $\alpha=1$, and $ \tau<1$, $\alpha=-1.$
 
 \subsubsection{Application to Ricci tensor and certain modified Schouten tensors}
 Let $\varrho$ be a fixed constant with $\varrho<1$ and $\varrho\neq0$.
 Set  $\mu_i=\frac{1}{n-\varrho}(\sum_{j=1}^n\lambda_{j}-\varrho \lambda_i).$ Let $\tilde{f}(\lambda_1,\cdots,\lambda_n)=f(\mu_1,\cdots,\mu_n)$. If $f$ is a homogeneous function of degree one then  so is $\tilde{f}$
 in the corresponding cone 
$\widetilde{\Gamma}=\left\{\lambda:  \mu=(\mu_1,\cdots,\mu_n)\in\Gamma
\right\}.$
We can check that 
$\kappa_{\widetilde{\Gamma}}=n-1$ for any $\Gamma$.
  Again, a straightforward computation gives 
   \begin{equation}
 \begin{aligned}
 \mathrm{tr}(-g^{-1}A_g)g-\varrho (-A_g)=\frac{\varrho}{n-2}Ric_g-\frac{(n-2+\varrho)R_g}{2(n-1)(n-2)}g.  \nonumber
 \end{aligned}
 \end{equation}
 
 As consequences, we derive many results. 
 If $\varrho<0$ then we get the results on $A_g^{\tau,\alpha}$ for $\tau<1, \alpha=-1$; in particular for the special case of 
 $\varrho=-(n-2)$, we obtain the main results proved in \cite{Guan2008IMRN,Gursky-Streets-Warren2011} on a compact Riemannian manifold with boundary.
 When $0<\varrho<1$ we can obtain the main results in \cite{Li2011Sheng} for modified Schouten tensors $A_g^{\tau,\alpha}$ with $\alpha=1, \tau>n-1$ on compact manifolds with boundary.

  \subsubsection{Application to the Einstein tensor}
Let  $\mu_i=\frac{1}{n-1}\sum_{j\neq i}\lambda_{j}$.
Given a homogeneous function of degree one, we let $\tilde{f}(\lambda_1,\cdots,\lambda_n)=f(\mu_1,\cdots,\mu_n)$. Then
$\tilde{f}$ is a homogeneous function of degree one with 
$\widetilde{\Gamma}=\left\{\lambda:  \mu=(\mu_1,\cdots,\mu_n)\in\Gamma
\right\}.$
 We can check that when $\Gamma\neq\Gamma_n$,
  $\kappa_{\widetilde{\Gamma}}=n-1$, 
  and so $f$ is of fully uniform ellipticity according to Lemma \ref{yuan-k+1}.
  By a simple computation, one derives
  \begin{equation}
 \begin{aligned}
 \mathrm{tr}(-g^{-1}A_g)g-(-A_g)=\frac{1}{n-2}(Ric_g-\frac{R_g}{2}g)=\frac{1}{n-2}G_g. \nonumber
 \end{aligned}
 \end{equation}
 As an application, we obtain 
   Theorem 1.1 of \cite{yuan2020conformal} on compact manifolds with boundary.

\medskip


\vspace{4mm}

\bigskip


\small
\bibliographystyle{plain}

\end{CJK*}

\end{document}